\documentclass[letterpaper, 11pt]{article} 

\usepackage[english]{babel}
\usepackage{amsmath,amsfonts,amsthm}
\usepackage{wasysym}
\usepackage[dvips]{graphicx}
\usepackage{enumerate}
\usepackage{mathtools}
\usepackage{mathrsfs}
\usepackage{bbm}
\usepackage{xcolor}
\usepackage[pdfpagemode=UseNone,bookmarksopen=false,colorlinks=true,urlcolor=blue,citecolor=blue,citebordercolor=blue,linkcolor=blue]{hyperref}
\usepackage{enumitem}


\usepackage[colorinlistoftodos, textsize=tiny]{todonotes}

\makeatletter
\providecommand\@dotsep{5}
\renewcommand{\listoftodos}[1][\@todonotes@todolistname]{%
  \@starttoc{tdo}{#1}}
\makeatother


\usepackage[margin=1in]{geometry}


\usepackage[numbers]{natbib}

\newcommand{\eps}{\varepsilon}

\newcommand{\Nat}{\mathbb{N}}
\newcommand{\Real}{\mathbb{R}}

\newcommand{\ex}[1]{\mathbb{E}\left[#1\right]}
\newcommand{\Pa}{{\mathcal P}}

\newcommand{\B}{{\mathcal B}}

\newcommand{\E}{{\mathcal E}}

\newcommand{\F}{{\mathcal F}}
\newcommand{\M}{{\mathcal M}}
\newcommand{\C}{{\mathcal C}}

\newcommand{\X}{{\mathcal X}}

\newcommand{\T}{{\mathcal T}}
\newcommand{\R}{{\mathcal R}}

\newcommand{\Aux}{{\mathcal G}}
\newcommand{\Li}{{\mathcal L}}

\newcommand{\eul}{\mathrm{e}}

\newtheorem{firsttheorem}{Proposition}

\newtheorem{theorem}[firsttheorem]{Theorem}
\newtheorem{lemma}[firsttheorem]{Lemma}
\newtheorem{corollary}[firsttheorem]{Corollary}

\newtheorem{definition}[firsttheorem]{Definition}
\newtheorem{proposition}[firsttheorem]{Proposition}

\numberwithin{equation}{section}
\numberwithin{firsttheorem}{section}



\numberwithin{secondtheorem}{section}

%
%

\newcommand{\card}[1]{\left|#1\right|}
\newcommand{\pr}[1]{\mathrm{Pr}\left[#1\right]}

\newcommand{\boldrm}[1]{\boldsymbol{\mathrm{#1}}}
\newcommand{\pois}[1]{\mathrm{Po}\left(#1\right)}

\newcommand{\bigO}[1]{\mathcal{O}\left(#1\right)}

\newcommand{\e}[1]{\exp\left(#1\right)}

\newcommand{\scs}[1]{\text{{\scshape#1}}}

\DeclarePairedDelimiter\floor{\lfloor}{\rfloor}

\begin{document}
\title{Asymptotic Enumeration and Limit Laws for Multisets: the Subexponential Case}
\author{Konstantinos Panagiotou\thanks{Department of Mathematics, Ludwig-Maximilians-Universit\"at M\"unchen. E-mail: kpanagio@math.lmu.de.}\, and Leon Ramzews\thanks{Department of Mathematics, Ludwig-Maximilians-Universit\"at M\"unchen. E-mail: ramzews@math.lmu.de. Funded by the Deutsche Forschungsgemeinschaft (DFG, German Research Foundation), Project PA 2080/3-1.}}
\date{ }
\maketitle
	
%

\begin{abstract}
For a given combinatorial class $\C$ we study the class $\Aux = \scs{Mset}(\C)$ satisfying the multiset construction, that is, any object in $\Aux$ is uniquely determined by a set of $\C$-objects paired with their multiplicities. For example, $\scs{Mset}(\mathbb{N})$ is (isomorphic to) the class of number partitions of positive integers, a prominent and well-studied case. The multiset construction appears naturally in the study of unlabelled objects, for example graphs or various structures related to number
partitions. Our main result establishes the asymptotic size of the set $\Aux_{n,N}$ that contains all multisets in $\Aux$ having size $n$ and being comprised of $N$ objects from $\C$, as $n$ \emph{and} $N$ tend to infinity and when the counting sequence of $\C$ is governed by subexponential growth; this is a particularly important setting in combinatorial applications. Moreover, we study the component distribution of random objects from $\Aux_{n,N}$ and we discover a phenomenon that we baptise \emph{extreme condensation}: taking away the largest component as well as all the components of the smallest possible size, we are left with an object which converges in distribution as $n,N\to\infty$. The distribution of the limiting object is also retrieved.
Moreover and rather surprisingly, in stark contrast to analogous results for labelled objects, the results here hold uniformly in $N$.
\end{abstract}

\section{Introduction \& Main Results}
\label{sec:intro}

Let $\C$ be a combinatorial class, that is, a countable set endowed with a size function $\lvert\cdot\rvert:\C\to\Nat$ such that $\C_n := \{C\in\C:\lvert C\rvert = n\}$ contains only finitely many objects for all $n\in\Nat$. Then the class of $\C$-multisets  $\Aux = \scs{Mset}(\C)$ consists of all objects of the form
\[
	\big\{(C_1,d_1),\dots,(C_k,d_k)\big\},
	\quad
	k\in\Nat, ~~ C_i\in\C, d_i\in\Nat \text{ for all }  1\le i\le k,
\]
where $(C_i)_{1\le i\le k}$ are pairwise distinct and $d_i$ is the multiplicity of the object $C_i$ in the multiset. In simple words, a $\C$-multiset is a finite unordered collection of elements from $\C$ such that multiple occurrences of each element are admissible. For example, if $\C = \mathbb{N}$, then $\scs{Mset}(\C)$ contains all partitions of natural numbers, a prominent object. The multiset construction is omnipresent in  combinatorial settings, for example when $\C$ is some class of connected unlabelled graphs; this makes $\Aux$ the class of unlabelled graphs having connected components in $\C$. For many historical references and examples we refer the reader to the excellent books \cite{Flajolet2009,Leroux1998}. An alternative and instructive way to describe multisets of size $n\in\Nat$ is to make the connection to number partitions explicit as follows. First, choose a number partition of $n$. Then, assign to each of the parts an element of that size from $\C$. Hence, multisets are also called \emph{weighted integer partitions}, frequently encountered in the context of statistical physics of ideal gas. There, $c_k:=\lvert\{C\in\C:\lvert C\rvert = k\}\rvert$ describes the different possible states of a particle at energy level $k\in\Nat$, see \cite{Vershik1996} for a thorough overview.

Given $G=\{(C_1,d_1),\dots,(C_k,d_k)\} \in \Aux$ we denote by $\lvert G\rvert := \sum_{1\le i\le k}d_i\lvert C_i \rvert$ the \emph{size} and by $\kappa(G) := \sum_{1\le i\le k}d_i$ the number of \emph{components} of $G$. We further set
\[
	\Aux_n:=\{G\in\Aux:\lvert G\rvert = n\} \quad \text{and} \quad \Aux_{n,N} := \{G\in\Aux_n:\kappa(G)=N\}, \qquad n,N\in\Nat.
\]
Additionally, we define $\mathsf{G}_n$ and $\mathsf{G}_{n,N}$ to be   multisets drawn uniformly at random from $\Aux_n$ and $\Aux_{n,N}$, respectively.

A vast amount of literature is dedicated to the enumerative problem of determining $g_n := \lvert \Aux_n \rvert$, and sometimes also $g_{n,N}:= \lvert\Aux_{n,N}\rvert$, under various general assumptions or for specific examples such as integer partitions, plane partitions or unlabelled \mbox{(un-)rooted} forests, see e.g.~\cite{Granovsky2015,Granovsky2008,Granovsky2006,Palmer1979,Meinardus1954,Hardy1918} for $g_n$ and \cite{Hwang1997,Knessl1990} for $g_{n,N}$. Note that determining $g_{n}$ and $g_{n,N}$ 
is directly related to the limiting distribution and local limit theorems for the number of components in $\mathsf{G}_n$, for example investigated in \cite{Mutafchiev2011,Hwang2001,Bell2000,Erdoes1941}. Another closely related topic that has received a lot of attention is devoted to finding the asymptotic behaviour of the global shape of $\mathsf{G}_n$ and $\mathsf{G}_{n,N}$ in terms of phenomena like condensation or gelation, cf.~\cite{Stufler2020,Mutafchiev2013,Barbour2005,Arratia2003,Mutafchiev1998,Erdoes1941}. Section \ref{sec:other_related_work} highlights some of these results in more detail and makes the connection to this work explicit.

We associate to $\C$ and $\Aux$ the (ordinary) generating series in two formal variables $x$ and $y$
\[
	C(x)
	:= \sum_{k\in\Nat}\lvert\C_k\rvert x^k
	\quad
	\text{ and }
	\quad
	G(x,y)
	:= \sum_{k,\ell \in\Nat} \lvert\Aux_{k,\ell}\rvert x^k y^\ell,
\]
and we use the standard notation $g_{n,N} = \lvert\Aux_{n,N}\rvert = [x^ny^N]G(x,y)$ for all $n,N\in\Nat$. These two power series are known to fulfil the fundamental relation, see for example \cite{Flajolet2009,Leroux1998},
\begin{align}
	\label{eq:ogf_G_wo_weights}
	G(x,y)
	= \e{\sum_{j\ge 1}y^j\frac{C(x^j)}{j}}.
\end{align}
In this paper we consider the prominent and broad case in which the counting sequence $(c_n)_{n \in\mathbb{N}}$ is \emph{subexponential}, and our aim is to study the class $\Aux_{n,N}$ -- what is $g_{n,N}$, how do typical objects look like? -- as $n \to \infty$ \emph{and for all $1 \le N \le n$}. Subexponential sequences appear naturally in combinatorial contexts, the main reason being the presence of square-root singularities in the analysis of  associated generating functions. Here is an example for a prototypical application.

\paragraph{Example.} \emph{Let $\T$ be the class of unlabelled trees, that is, isomorphism classes of connected and acyclic graphs. Then $\mathcal{F} = \scs{Mset}(\T)$ is the class of unlabelled forests. Moreover, see~\cite{Otter1948}, the number of unlabelled trees satisfies \[|\T_n| \sim c \cdot n^{-5/2} \cdot \rho^{-n}\] for some $c > 0$ and $0 < \rho < 1$. What can we say about $\mathcal{F}_{n,N}$?}

~\\
Similar counting sequences, in particular with a polynomial term  $n^{-\alpha}$ for some $\alpha > 1$, appear in a variety of contexts in graph enumeration; so-called subcritical graph classes~\cite{Drmota2011} that include trees, outerplanar and series-parallel graphs are prominent examples. All these counting sequences -- and many more -- are \emph{subexponential}. Let us proceed with a formal definition. In order to do so, we step back from our combinatorial setting and let $(c_k)_{k\in\Nat}$ be a real-valued non-negative sequence. Then we say that $C(x) = \sum_{k\ge 1}c_kx^k$, or $(c_k)_{k\ge 1}$ respectively, is \emph{subexponential} with radius of convergence $\rho>0$, if
\[
	\frac{c_{n-1}}{c_n}
	\sim \rho
	\quad
	\text{and}
	\quad
	c_n^{-1}\sum_{1\le k\le n}c_kc_{n-k}
	\sim 2C(\rho) <\infty,
	\quad
	n\to\infty.
\]
Important examples for subexponential sequences are of the form $c_n \sim \lambda(n) \cdot n^{-\alpha} \cdot \rho^{-n}$ for $\alpha > 1$ and $\lambda(n)$ any slowly varying function, see \cite{Embrechts1984}.

Let us now return to the question investigated in this paper. Given a subexponential $C(x)$ we want to study for $n\in\mathbb{N}$ and all $1\le N \le n$ the number $g_{n,N} = [x^ny^N]G(x,y)$, and moreover, if $C(x)$ is the generating series of some combinatorial class (that is, $(c_k)_{k\in \mathbb{N}}$ is an integer sequence), typical properties of the random multiset $\mathsf{G}_{n,N}$.
A directly related result in this context is~\citep{Bell2000}, where the authors show that the (limiting) distribution of the number of components in a random $\cal C$-multiset $\mathsf{G}_{n}$ is given by a weighted sum of independent Poisson random variables. Equivalently, this means that $g_{n,N}$ can be determined asymptotically for \emph{fixed} of $N$ as $n\to\infty$; thus the enumeration problem is well understood for a bounded number of components. On the other end of the spectrum, let $m\equiv m(C)\in\Nat$ be such that $c_m>0$ and $c_1=\cdots=c_{m-1}=0$. When $C(x)$ is the generating function of a combinatorial class, this means that the size of the smallest possible object in $\C$ is $m$, and so, any $\cal C$-multiset in ${\cal G}_n$ has at most $ n/m$ components. In particular, $n-mN \ge 0$, and if $n -mN = O(1)$ then the structure  of \emph{any}  $\cal C$-multiset of size $n$ with $N$ components is rather simple: except for a bounded number of components of bounded size, all other components are of the smallest possible size $m$.
Our first main result  adresses the enumeration problem in  all other remaining cases, namely when $N, n-mN \to \infty$.
\begin{theorem}
	\label{thm:1_coeff_G(x,y)}
Suppose that $C(x)$ is subexponential and $0<\rho<1$. Let $m = \min\{k\in \mathbb{N}: c_m > 0\}$. Then, as  $n,N,n-mN\to\infty$,
\begin{equation}
\label{eq:mainResult}
	[x^n y^N] G(x,y)
	\sim A \cdot N^{c_m-1} \cdot
	c_{n-m(N-1)},
\end{equation}
where
\[
	A 
	= \frac1{\Gamma(c_m)}\e{\sum_{j\ge 1}\frac{C(\rho^j)-c_m\rho^{jm}}{j\rho^{jm}}}.
\]
\end{theorem}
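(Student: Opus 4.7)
The plan is to isolate the contribution of size-$m$ objects from $\C$—which the theorem predicts form all but a bounded fraction of the $N$ components—via an algebraic factorisation, then perform singularity analysis in a transformed variable, and finally deploy subexponential asymptotics in $x$. Starting from the formula \eqref{eq:ogf_G_wo_weights}, I would split the $j=1$ summand by whether a component has size exactly $m$ or strictly larger than $m$, obtaining
\[
G(x,y) = (1-yx^m)^{-c_m}\cdot H(x,y), \qquad H(x,y):=\e{\sum_{j\ge 1}y^j\frac{C(x^j)-c_m x^{jm}}{j}}.
\]
To decouple the singular factor from $x$, I then substitute $z = yx^m$, yielding $G(x,z/x^m) = (1-z)^{-c_m}\tilde H(x,z)$ with $\tilde H(x,z) := H(x,z/x^m)$; the singularity now lies cleanly at $z=1$ and is $x$-independent, while $\tilde H$ is still a bivariate power series with non-negative coefficients since $(C(x^j)-c_m x^{jm})/x^{jm} = \sum_{k>m} c_k x^{j(k-m)}$ is one.

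Since $[z^N]G(x,z/x^m) = \sum_n g_{n,N}\,x^{n-mN}$, expanding $(1-z)^{-c_m}$ and writing $\tilde h_j(x) := [z^j]\tilde H(x,z)$ yields the combinatorial identity
\[
g_{n,N} = \sum_{j=0}^{N}\binom{N-j+c_m-1}{c_m-1}\cdot [x^{n-mN}]\tilde h_j(x),
\]
which matches the split of $N$ components into $j$ of size strictly larger than $m$ and $N-j$ of size exactly $m$. For each fixed $j$ the binomial is asymptotic to $N^{c_m-1}/\Gamma(c_m)$, and $\sum_j \tilde h_j(x) = \tilde H(x,1) =: B(x)$. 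Setting $A = B(\rho)/\Gamma(c_m)$ (which matches the theorem's formula), the analytic work reduces to establishing
\[
g_{n,N}\sim \frac{N^{c_m-1}}{\Gamma(c_m)}\cdot [x^{n-mN}]B(x)
\quad\text{and}\quad
[x^{n-mN}]B(x)\sim B(\rho)\cdot c_{n-m(N-1)}.
\]

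For the second estimate I would decompose $B(x)=\e{\tilde C(x)}\cdot R(x)$, where $\tilde C(x) := (C(x)-c_m x^m)/x^m$ and $R(x) := \e{\sum_{j\ge 2}(C(x^j)-c_m x^{jm})/(j x^{jm})}$. The coefficients $[x^k]\tilde C(x) = c_{m+k}$ form a subexponential sequence with the same radius $\rho$; this follows from $c_{k+1}/c_k\to 1/\rho$ together with the direct identity $\tilde c^{*2}(n) = (c^{*2})(2m+n)-2c_m c_{m+n}$. Meanwhile $R$ is analytic on $\{\lvert x\rvert<\sqrt\rho\}$—a disk strictly containing $\rho$ because $\rho<1$—hence has no influence on the dominant singularity. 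The standard closure of subexponential sequences under composition with entire functions and multiplication by functions analytic past the singularity then yields $[x^k]B(x) \sim R(\rho)\e{\tilde C(\rho)}\, c_{m+k} = B(\rho)\, c_{m+k}$, and $k=n-mN$ gives the claim. The main obstacle lies in the first estimate: interchanging the binomial asymptotic with the $j$-summation requires uniform control of the tail $j\gg 1$ in the regime $n, N, n-mN\to\infty$. I expect this control to come from the finiteness of $\tilde H(\rho, 1+\eps)$ for sufficiently small $\eps>0$—a consequence of $\rho<1$ forcing absolute convergence of the $j\ge 2$ contribution and of $C(\rho)<\infty$ handling the $j=1$ part—which yields geometric decay of $\tilde h_j(\rho)$ in $j$, and, via non-negativity of $[x^k]\tilde h_j$ and its monotonicity in $x\in[0,\rho]$, a dominated-convergence-style bound on the tail of the sum.
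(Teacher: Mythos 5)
Your route is genuinely different from the paper's. The paper works probabilistically: via the P\'olya--Boltzmann sampler it writes $g_{n,N} = G(\rho)\rho^{-n}\pr{\E_n\mid\Pa_N}\pr{\Pa_N}$, obtains $\pr{\Pa_N}$ by singularity analysis of $F(x)=(1-\rho^m x)^{-c_m}B(x)$ --- which is precisely your factorisation $(1-yx^m)^{-c_m}H(x,y)$ read in the $y$-variable at $x=\rho$ --- and obtains $\pr{\E_n\mid\Pa_N}$ through a delicate analysis of Poisson-many subexponential summands (Lemmas \ref{lem:P_j_cond_P_N_upper_bound_wo_weights}--\ref{lem:probability_E_n_P_N_asymptotic_wo_weights}). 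You trade that probabilistic work for a coefficient identity plus two closure properties of subexponential sequences; this is shorter for Theorem \ref{thm:1_coeff_G(x,y)} alone, but the paper's conditional-law lemmas are reused verbatim for Theorems \ref{thm:3_size_largest_comp_L_n_N} and \ref{thm:4_distribution_of_remainder_R_n_N}, which your route would not yield. Your algebra checks out: the factorisation, the substitution $z=yx^m$, the identity for $g_{n,N}$, the constant $A=B(\rho)/\Gamma(c_m)$, and part (b) (subexponentiality of $\tilde C$ via your convolution identity, analyticity of $R$ beyond $\rho$ since $\rho<1$, and $[x^k]e^{\tilde C(x)}\sim e^{\tilde C(\rho)}c_{m+k}$, which is Lemma \ref{lem:subexp_several_prop}~\ref{lem:random_stopped_sum} with Poisson $\tau$) are all correct.

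The one step that does not work as written is the tail control in part (a). The bound $[x^{K}]\tilde{h}_j(x)\le \tilde{h}_j(\rho)\rho^{-K}$ (which is what non-negativity and monotonicity on $[0,\rho]$ give), combined with geometric decay of $\tilde{h}_j(\rho)$, yields a tail of order $(1+\eps)^{-J}\rho^{-K}$, whereas the main term is of order $N^{c_m-1}c_{m+K}$ with $K=n-mN$. The ratio $\rho^{-K}/c_{m+K}$ typically diverges polynomially (it is $\asymp K^{\alpha}$ when $c_k\asymp k^{-\alpha}\rho^{-k}$), so no fixed $J$ makes this tail negligible. The repair is to apply the geometric decay \emph{before} extracting the $x$-coefficient: non-negativity of all coefficients of $\tilde H$ gives $[x^{K}]\tilde{h}_j(x)\le (1+\eps)^{-j}\,[x^{K}]\tilde H(x,1+\eps)$, and the same decomposition you use for $B$ shows $[x^{K}]\tilde H(x,1+\eps)=O(c_{m+K})$ uniformly in $K$ (one needs $(1+\eps)\rho<1$ for the $j\ge 2$ part, again courtesy of $\rho<1$, together with the uniformity remark \eqref{eq:asimbuniform}). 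Then $\sum_{j>J}[x^{K}]\tilde{h}_j(x)\le \eps' c_{m+K}$ for $J=J(\eps')$, and since $\binom{N-j+c_m-1}{N-j}=O(N^{c_m-1})$ uniformly in $0\le j\le N$ for $c_m\ge 1$ (for real $0<c_m<1$ split the sum at $j=N/2$), the exchange of the binomial asymptotics with the $j$-summation is justified and the argument closes. Modulo this correction your proof is sound.
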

The proof is in  Section \ref{subsec:proof_1}. Some discussion and remarks are in place. First, by considering real-valued sequences $(c_k)_{k\in\Nat}$ the formula in Theorem \ref{thm:1_coeff_G(x,y)} gives us the asymptotic behaviour of the coefficients of $G(x,y)$ with a priori no combinatorial interpretation. However, \text{if} the sequence is integer-valued and corresponds to the counting sequence of a combinatorial class $\cal C$, then Theorem \ref{thm:1_coeff_G(x,y)} is an enumeration result: it provides us with the number of $\cal C$-multisets of size $n$ and $N$ components, where $N, n-mN \to \infty$.
Second, in combinatorial applications, note that we \emph{always} have that $\rho <1$, as otherwise the subexponentiality of $C(x)$ would imply that $c_k \to 0$ as $k\to\infty$. That is, the assumption $0<\rho<1$ imposes no restriction in the combinatorial setting.

Let us make a third remark that will pave the way to the following results.
From here on 
we solely consider the combinatorial setting. 
Note the right hand side of~\eqref{eq:mainResult}: this formula establishes an explicit connection between $g_{n,N}$ and $c_{n-m(N-1)}$, that is, we do not need the actual counting sequence of $\cal C$ to make statements about $g_{n,N}$. Moreover, a closer look at this formula reveals an unexpected fact. The number of possible ways to choose a multiset of $N$ objects from $\C_m$ is given by $\binom{c_m + N -1}{N} \sim N^{c_m-1}/\Gamma(c_m)$ (this is just a number partition of $N$ in $c_m$ parts). Hence the right hand side of~\eqref{eq:mainResult} is proportional to the number of possibilities to choose $N$ objects from $\C_m$ and one object from $\C_{n-m(N-1)}$; that is, a ``typical'' object from $\Aux_{n,N}$ should essentially consist of a big component with more or less $n - m(N-1)$ vertices and $N-1$ components of the smallest possible size $m$. This is rather extreme, as the size of a component of an object in $\Aux_{n,N}$ is bounded from above by $n-m(N-1)$.

Our next result formalizes this intuition. For $G=\{(C_1,d_1),\dots,(C_k,d_k)\} \in \Aux$ denote by $\Li(G):=\max_{1\le i\le k}\lvert C_i\rvert$ the size of one of its largest components. We show that except for a term $\mathcal{O}_p(1)$, that is, a quantity that is bounded in probability, the largest component in a uniformly drawn object $\mathsf{G}_{n,N}$ from $\Aux_{n,N}$ has indeed size very close to $n-mN$.
\begin{theorem}
\label{thm:3_size_largest_comp_L_n_N}
Suppose that $C(x)$ is subexponential. Then, as  $n,N,n-mN\to\infty$,
\[
	\Li(\mathsf{G}_{n,N}) = n - mN  + {\cal O}_p(1).
\]
\end{theorem}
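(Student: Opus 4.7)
The bound $\Li(\mathsf{G}_{n,N}) \le n - m(N-1) = n - mN + m$ holds deterministically, since the $N-1$ non-largest components each have size at least $m$. Setting $L^* := n - m(N-1)$ and $E := L^* - \Li(\mathsf{G}_{n,N}) \ge 0$, it therefore suffices to prove tightness of $E$; my plan is to establish that $E$ converges in distribution to a proper nonnegative integer-valued random variable, which by Scheff\'e's lemma reduces to showing pointwise convergence $\pr{E = k} \to p_k$ for each $k \ge 0$ together with $\sum_{k\ge 0} p_k = 1$.

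Classifying $G \in \Aux_{n,N}$ with $\Li(G) = \ell$ by the number $d$ of components of the maximum size gives
\[
	\card{\{G \in \Aux_{n,N}: \Li(G) = \ell\}} = \sum_{d \ge 1} \binom{c_\ell + d - 1}{d}\, a^{<\ell}_{n - d\ell,\, N - d},
\]
where $a^{<\ell}_{n',N'}$ counts multisets with $N'$ components of sizes strictly less than $\ell$ totalling $n'$. Fix $k \ge 0$ and take $\ell := L^* - k$. The feasibility inequality $n - d\ell \ge m(N-d)$ rewrites as $d(\ell - m) \le L^* - m$, which for fixed $k$ and $L^* \to \infty$ forces $d = 1$. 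Moreover the constraint ``$<L^* - k$'' is non-binding, since the $N-1$ remaining components have total size $m(N-1) + k$ and hence each has individual size at most $m + k < L^* - k$ for $L^*$ large. Therefore, for $L^*$ large and $k$ fixed,
\[
	\card{\{G \in \Aux_{n,N}: \Li(G) = L^* - k\}} = c_{L^* - k} \cdot g_{m(N-1) + k,\, N-1}.
\]

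Theorem~\ref{thm:1_coeff_G(x,y)} is inapplicable to $g_{m(N-1)+k, N-1}$ because the excess $(m(N-1)+k) - m(N-1) = k$ is bounded. I treat this boundary regime directly via the factorisation $G(x,y) = (1 - yx^m)^{-c_m}\, \Hi(x,y)$ with $\Hi(x,y) := \prod_{j > m}(1 - yx^j)^{-c_j}$. Expanding the first factor yields
\[
	g_{m(N-1)+k,\, N-1} = \sum_{j'=0}^{k} \binom{c_m + N - 2 - j'}{N-1-j'}\, [x^{mj'+k} y^{j'}]\, \Hi(x,y),
\]
the sum truncating at $j' = k$ because the components contributing to $\Hi$ have size at least $m+1$. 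For fixed $k$ and $N \to \infty$ each binomial coefficient is asymptotic to $N^{c_m-1}/\Gamma(c_m)$, so $g_{m(N-1)+k, N-1} \sim N^{c_m - 1} Q_k/\Gamma(c_m)$ with $Q_k := \sum_{j' \ge 0} [x^{mj'+k} y^{j'}]\, \Hi(x,y)$. Combining with $g_{n,N} \sim A N^{c_m - 1} c_{L^*}$ from Theorem~\ref{thm:1_coeff_G(x,y)} and the subexponential consequence $c_{L^* - k}/c_{L^*} \to \rho^k$ gives $\pr{E = k} \to \rho^k Q_k/(A\Gamma(c_m)) =: p_k$.

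It remains to verify $\sum_{k \ge 0} p_k = 1$. Interchanging the two summations (all terms are nonnegative),
\[
	\sum_{k \ge 0} \rho^k Q_k = \sum_{j' \ge 0} \sum_{k \ge 0} \rho^k\, [x^{mj'+k} y^{j'}]\, \Hi(x,y) = \Hi(\rho,\, \rho^{-m}) = \prod_{j > m} (1 - \rho^{j - m})^{-c_j},
\]
and expanding the logarithm together with $c_k = 0$ for $k < m$ identifies this with $\e{\sum_{i \ge 1}(C(\rho^i) - c_m \rho^{im})/(i\rho^{im})} = A\,\Gamma(c_m)$. Hence $\sum_k p_k = 1$, so by Scheff\'e's lemma $E$ converges in distribution, giving $E = \mathcal{O}_p(1)$ and therefore $\Li(\mathsf{G}_{n,N}) = n - mN + \mathcal{O}_p(1)$. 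The main technical obstacle is the analysis of $g_{m(N-1)+k, N-1}$ outside the regime of Theorem~\ref{thm:1_coeff_G(x,y)}; the coincidence between the normalising constant $A\Gamma(c_m)$ and the special value $\Hi(\rho, \rho^{-m})$ is precisely what certifies that $(p_k)_{k \ge 0}$ is a bona fide probability distribution.
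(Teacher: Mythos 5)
Your proposal is correct, and it takes a genuinely different route from the paper. The paper works inside the P\'olya--Boltzmann representation: it conditions on $\Pa_N\cap\E_n$, shows that $P_1$ and the higher-cycle contribution $R$ retain joint exponential tails under this conditioning (building on the tail estimates from the proof of Theorem~\ref{thm:1_coeff_G(x,y)}), and then invokes the single-big-jump principle, Lemma~\ref{lem:subexp_several_prop}~\ref{lem:big_jump_principle}, for the conditioned sum $\sum_{i\le p}C_{1,i}$. You instead argue by direct enumeration: the deterministic bound $\Li\le n-m(N-1)$, the stratification of $\{\Li=L^*-k\}$ by the multiplicity $d$ of the maximum size (with the feasibility count correctly forcing $d=1$ and making the ``$<\ell$'' constraint vacuous once $L^*>m+2k$), the reduction to $c_{L^*-k}\cdot g_{m(N-1)+k,N-1}$, and an elementary treatment of the boundary regime $n-mN=O(1)$ via $G(x,y)=(1-yx^m)^{-c_m}\Hi(x,y)$ -- precisely the regime Theorem~\ref{thm:1_coeff_G(x,y)} excludes and the paper's introduction only describes informally. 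Your identity $A\Gamma(c_m)=\Hi(\rho,\rho^{-m})$ checks out (expand $-\log(1-\rho^{j-m})$ and use $c_j=0$ for $j<m$), and there is no circularity since Theorem~\ref{thm:1_coeff_G(x,y)}, which you use for the denominator $g_{n,N}$, is proved independently. What each approach buys: yours is more elementary given Theorem~\ref{thm:1_coeff_G(x,y)}, avoids the conditioned tail estimates and the big-jump lemma entirely, and delivers as a by-product the exact limit law of $n-m(N-1)-\Li$, which is the size marginal of the remainder law in Theorem~\ref{thm:4_distribution_of_remainder_R_n_N} (indeed $n-m(N-1)-\Li(\mathsf{G}_{n,N})=\lvert\R(\mathsf{G}_{n,N})\rvert_{>m}$); the paper's approach keeps everything inside the probabilistic framework whose intermediate estimates (e.g.\ Lemma~\ref{lem:P_1=p_R_ge_r_cond_E_n_P_N_wo_weights}) are reused elsewhere. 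Two minor points worth polishing: when $Q_k=0$ the relation is an exact equality $g_{m(N-1)+k,N-1}=0$ rather than an asymptotic, which is what you need for $\pr{E=k}\to 0=p_k$; and the tightness conclusion needs only pointwise convergence of the probabilities plus $\sum_k p_k=1$ (a finite partial sum argument), so the appeal to Scheff\'e is stronger than necessary but harmless.
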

The proof can be found in Section \ref{subsec:proof_3}. We call the phenomenon established in Theorem~\ref{thm:3_size_largest_comp_L_n_N} \emph{extreme condensation}: we observe that typically our objects have a giant component that is \emph{essentially as large as possible}; its size is close to the largest possible size $n-m(N-1)$. In particular, virtually all other components are of smallest possible size $m$. We are not aware of any other object with a comparable behaviour, at least not in the analytical $(\rho>0$) setting considered here.\footnote{For example, it is known that a factorial weight sequence induces extreme condensation in the balls-in-boxes model, see \cite[Example 19.36]{Janson2012}. In such situations the respective generating series has radius of convergence 0.} Moreover, this behaviour is surprising for one more reason: if we consider the labelled counterparts of our unlabelled objects, in our running example trees, then the typical structure is well known to undergo various phase transitions (from subcritical to condensation) depending on the number of components, but it never becomes as extreme as observed here. See \cite{Janson2012,Panagiotou2018_2} and Section~\ref{subsec:discussion} for a more detailed discussion.

Our final main result addresses the last remaining bit and describes the shape of a typical object from $\mathsf{G}_{n,N}$ when we remove a component of largest size and all components of the smallest possible size $m$. This \emph{remainder} is a multiset of stochastically bounded size and number of components, and we determine the limiting distribution.
To formulate our statement we need some additional notation. Define the class $\C_{>m} = \bigcup_{k>m}\C_k$ equipped with the modified size function $\lvert C\rvert_{>m} := \lvert C\rvert - m$ for $C\in\C_{>m}$. The associated generating function $C_{>m}(x)$ thus equals $(C(x)-c_mx^m)/x^m$; here subtracting $c_mx^m$ accounts for the fact that we remove objects of (the smallest) size $m$ and dividing through $x^m$ all objects in $\C_k$, $k>m$, are treated as objects with size $k-m$. Similar to the formula in \eqref{eq:ogf_G_wo_weights} (setting $y=1$) the class of all multisets $\Aux_{>m} := \scs{Mset}(\C_{>m})$ therefore has generating series
\[
	G_{>m}(x)
	:= \e{\sum_{j\ge 1}\frac{C(x^j)-c_mx^j}{jx^{jm}}}.
\]
Further, the size of an object $G$ in $\Aux_{>m}$ is given by $\lvert G\rvert_{>m} := \lvert G\rvert - m\kappa(G)$. As the coefficients of $C_{>m}(x)$ are given by $(c_{k+m})_{k\in\Nat}$ we deduce that $C_{>m}(x)$ is also subexponential with radius of convergence $\rho$ and $G_{>m}(\rho)<\infty$. 
Define a random variable $\Gamma G_{>m}(\rho)$ on $\Aux_{>m}$ specified by
\begin{equation}
	\pr{\Gamma G_{>m}(\rho) = G}
	= \frac{\rho^{\lvert G\rvert_{>m}}}{G_{>m}(\rho)}
	= \e{-\sum_{j\ge 1}\frac{C(\rho^j)-c_m\rho^j}{j\rho^{jm}}}
	\rho^{\lvert G\rvert - m\kappa(G)},
	\quad 
	G\in\Aux_{>m}.
	\label{eq:boltzmannG>m}
\end{equation}
We remark in passing that this is the -- well-known -- Boltzmann distribution (on ${\cal G}_{>m}$) about which we will talk later extensively. For $G\in\Aux$ let the remainder $\R(G)$ be the multiset obtained after removing all tuples $(C,d)\in G$ with $C\in\C_m$ and one object of largest size from $G$ (this choice can be done in a canonical way by numbering all elements in $\C$). That means, if the object of largest size has multiplicity $d>1$ replace $d$ by $d-1$, otherwise remove the object and its multiplicity $1$ completely from the set. Then the distribution in \eqref{eq:boltzmannG>m} is the limit of the remainder $\R(\mathsf{G}_{n,N})$, see Section \ref{subsec:proof_4} for the proof.
\begin{theorem}
\label{thm:4_distribution_of_remainder_R_n_N}
Suppose that $C(x)$ is subexponential. Then, as  $n,N,n-mN\to\infty$, in distribution 
$
	\R(\mathsf{G}_{n,N})
	\to
	\Gamma G_{>m}(\rho).
$
\end{theorem}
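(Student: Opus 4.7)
The plan is to verify pointwise convergence $\prob(\R(\mathsf{G}_{n,N}) = G) \to \prob(\Gamma G_{>m}(\rho) = G)$ for every fixed $G \in \Aux_{>m}$ and then upgrade this to convergence in distribution via standard arguments for discrete limits.

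First I would enumerate the preimages of a fixed $G \in \Aux_{>m}$ under $\R$. Setting $k := \kappa(G)$ and $s := \lvert G \rvert_{>m}$, any $\mathsf{H} \in \Aux_{n,N}$ with $\R(\mathsf{H}) = G$ is obtained from $G$ by adjoining a multiset of $a := N - k - 1$ elements from $\C_m$ together with a single distinguished ``largest'' $\C$-object $D$ of size $\ell := n - mN + m - s$. Since $G$ is fixed while $\ell \to \infty$, for $n - mN$ sufficiently large $D$ has strictly larger size than every component of $G$, so $D$ is automatically the unique canonical largest component of $\mathsf{H}$ and is distinct as a $\C$-object from every component of $G$. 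Hence the reconstruction is bijective, and
\[
    \lvert\{\mathsf{H} \in \Aux_{n,N} : \R(\mathsf{H}) = G\}\rvert = c_\ell \binom{c_m + N - k - 2}{N - k - 1}.
\]

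Next I would divide by $g_{n,N}$ and invoke Theorem \ref{thm:1_coeff_G(x,y)}. The multiset binomial is a polynomial in $N$ of degree $c_m - 1$ with leading coefficient $1/\Gamma(c_m)$, so the $N^{c_m - 1}$ factors in numerator and denominator cancel. The remaining ratio $c_\ell / c_{n - m(N-1)} = c_{n - m(N-1) - s}/c_{n - m(N-1)}$ tends to $\rho^s$ by an $s$-fold iteration of the subexponential identity $c_{r-1}/c_r \to \rho$. A short manipulation of the exponential prefactor in the definition of $A$ from Theorem \ref{thm:1_coeff_G(x,y)} shows that $A\,\Gamma(c_m) = G_{>m}(\rho)$, and assembling the pieces yields
\[
    \prob(\R(\mathsf{G}_{n,N}) = G) \longrightarrow \frac{\rho^s}{G_{>m}(\rho)} = \prob(\Gamma G_{>m}(\rho) = G).
\]

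Finally, since $\Aux_{>m}$ is countable and the limiting masses sum to $1$ (as $G_{>m}(\rho) < \infty$), Scheffé's lemma upgrades pointwise convergence of probability mass functions to convergence in total variation, and a fortiori in distribution. The hard part will be the careful handling of the canonical ``largest'' convention in the reconstruction step: one must verify that for every $G$, once $n - mN$ is large enough the added component $D$ is genuinely the canonical largest and does not coincide with any component of $G$, so that the bijection is valid; the finitely many boundary $(n,N)$ for which $\ell \le \Li(G)$ are inconsequential in the limit but should be explicitly dismissed.
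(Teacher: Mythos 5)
Your argument is correct, but it follows a genuinely different route from the paper. The paper never enumerates the fiber $\R^{-1}(G)\cap\Aux_{n,N}$ directly; instead it fixes a truncation level $S$, reduces $\pr{\R(\mathsf{G}_{n,N})=G}$ to the event that the multiplicities $d_C(\mathsf{G}_{n,N})$ match those of $G$ for all $C\in\C_{m+1,S}$ (invoking Theorem \ref{thm:3_size_largest_comp_L_n_N} twice, to bound $\lvert\R(\mathsf{G}_{n,N})\rvert$ and to guarantee the largest component exceeds $S$), computes that probability via the multivariate identity $G(x,y,\boldrm{v}_S)=G(x,y)\prod_{C\in\C_{m+1,S}}\frac{1-x^{\lvert C\rvert}y}{1-x^{\lvert C\rvert}yv_C}$ together with Theorem \ref{thm:1_coeff_G(x,y)}, and finally lets $S\to\infty$ by monotone convergence. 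Your direct count $c_\ell\binom{c_m+N-k-2}{N-k-1}$ of the preimage is valid: the size and component-count constraints force $a'=N-k-1$ and $\ell'=\ell=n-m(N-1)-s$ for every preimage, so once $\ell>\Li(G)$ the reconstruction is indeed bijective, and the ratio to $g_{n,N}\sim AN^{c_m-1}c_{n-m(N-1)}$ converges to $\rho^{s}/(A\Gamma(c_m))=\rho^{s}/G_{>m}(\rho)$ exactly as you say (your identity $A\Gamma(c_m)=G_{>m}(\rho)$ is right; note the paper's displayed definition of $G_{>m}$ contains a typo, $c_mx^j$ in place of $c_mx^{jm}$, as the surrounding formulas confirm). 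Invoking Scheff\'e on the countable space $\Aux_{>m}$ then gives convergence in total variation, which also makes Theorem \ref{thm:3_size_largest_comp_L_n_N} unnecessary as an input — a tightness statement the paper needs but you get for free. The trade-off is that your approach is tailored to this theorem (it leans on the rigidity of the fiber structure), whereas the paper's multivariate generating-function computation is the kind of machinery that transfers to settings where the preimage of a fixed remainder is not so easily parametrized. One small wording correction: for fixed $G$ the set of pairs $(n,N)$ with $\ell\le\Li(G)$ is not finite; what is true (and suffices) is that along any sequence with $n-mN\to\infty$ only finitely many terms violate $\ell>\Li(G)$.
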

We close this introduction and the presentation of the main results by catching up with our previous example regarding the class $\T$ of unlabelled trees and ${\cal F} = \scs{Mset}({\T})$ the unlabelled forests.

\paragraph{Example (continued)} \emph{Theorem~\ref{thm:1_coeff_G(x,y)} is directly applicable to the class of unlabelled trees. We readily obtain that the number of unlabelled forests of size $n$ with $N$ components satisfies
\[
	f_{n,N} \sim A \cdot\lvert\T_{n-N+1}\rvert \sim A' \cdot (n-N)^{-5/2} \rho^{-n+N},
	\quad \text{for } n,N,n-N \to \infty
\]
and for some constants $A,A' > 0$. Moreover, for this range of $N$, we obtain that with high probability, a random unlabelled forest contains a huge tree with $n - N+ \bigO{1}$ vertices, and $N + \bigO{1}$ ``trivial'' trees that consist of a single vertex.
This is in stark contrast to the known behaviour of random labelled forests, see Section \ref{subsec:discussion} for a detailed discussion, but also from unlabelled models such as random unrooted ordered forests, cf.~\cite{Bernikovich2011}.
} ~\\

We proceed with an application of our results to Benjamini-Schramm convergence of unlabelled graphs with many components. The Benjamini-Schramm limit of a sequence of graphs describes what a uniformly at random chosen vertex typically sees in its neighbourhood and is a special instance of local weak convergence, see also \cite{Aldous2004,Benjamini2001}. 
Given a graph $G=(V,E)$ we form the \emph{rooted} graph $(G,o)$ by distinguishing a vertex $o\in V$. Let $\B$ be the collection of all these rooted graphs.
Then two graphs $(G,o)$ and $(G',o')$ in $\B$ are called isomorphic, $(G,o)\simeq(G',o')$, if there exists an edge-preserving bijection $\Phi$ on the vertex sets of $G$ and $G'$ such that $\Phi(o)=o'$. Hence, the collection $\B_*=\B/{\simeq}$ of equivalence classes in $\B$ under the relation $\simeq$ contains all unlabelled rooted graphs. 
%

Set $B_k(G,o)$ to be the induced subgraph of $(G,o)\in\B_*$ containing all vertices within graph distance $k$ from the root $o$.
Then we say that a sequence of (labelled or unlabelled) simple connected locally finite graphs $(\mathsf{G}_n)_{n\ge 1}$ (possibly random) converges in the \emph{Benjamini-Schramm} (BS) sense to a limiting object $(\mathbbmss{G},\mathbbmss{o})\in\B_*$ if for a vertex $o_n$ being selected uniformly at random from $\mathsf{G}_n$  
\begin{align}
\label{eq:BS_def}
	\lim_{n\to\infty}\pr{B_k(\mathsf{G}_n,o_n)\simeq(G,o)}
	=\pr{B_k(\mathbbmss{G},\mathbbmss{o})\simeq(G,o)},
	\quad k\in\Nat, (G,o)\in\B_*.
\end{align}
Back to our setting, we consider $\C$ to be a class of unlabelled finite connected graphs (with subexponential counting sequence and $m\in\Nat$ denotes the size of the smallest possible graph in $\C$) such that $\Aux=\scs{Mset}(\C)$ is the class of unlabelled graphs with connected components in $\C$. Let as before $\mathsf{G}_{n,N}$ be drawn uniformly at random from $\Aux_{n,N}$. In order to adapt to the setting above we let $(\mathsf{G}_{n,N},o_n)$ denote the connected component around a uniformly at random chosen root $o_n$ in $\mathsf{G}_{n,N}$. Let $\mathsf{C}_n$ be drawn uniformly at random from $\C_n:=\{C\in\C:\lvert C\rvert = n\}$. With this at hand, the extension of BS convergence to non-connected graphs is evident and we obtain the following result.
\begin{proposition}
\label{prop:BS_limit}
Suppose that $C(x)$ is subexponential. Assume that $mN/n \to \lambda \in [0,1)$ as $n,N\to\infty$. If the sequence $(\mathsf{C}_n)_{n\ge 1}$ converges to a limit object $(\mathbbmss{C},\mathbbmss{o})$ in the BS sense, then $\mathsf{G}_{n,N}$ converges as $n,N\to\infty$ to a limit object $(\mathbbmss{G},\mathbbmss{o})$ in the BS sense given by the law
\[
	(1-\lambda) \delta_{(\mathbbmss{C},\mathbbmss{o})}
	+ \lambda \delta_{(C_m,o_m)},
\]	
where $o_m$ is a vertex chosen uniformly at random among the $m$ vertices in $\mathsf{C}_m$. In particular, if $N = o(n)$ we have that $(\mathbbmss{G},\mathbbmss{o})=(\mathbbmss{C},\mathbbmss{o})$.
\end{proposition}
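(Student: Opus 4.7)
The plan is to leverage the structural decomposition provided by Theorems~\ref{thm:3_size_largest_comp_L_n_N} and~\ref{thm:4_distribution_of_remainder_R_n_N}: on an event of probability $1-o(1)$, the multiset $\mathsf{G}_{n,N}$ splits into a \emph{unique} giant component $L$ of size $s = n - mN + \mathcal{O}_p(1)$, exactly $D = N + \mathcal{O}_p(1)$ components of size $m$, and a remainder $\R(\mathsf{G}_{n,N})$ with $|\R(\mathsf{G}_{n,N})|+\kappa(\R(\mathsf{G}_{n,N})) = \mathcal{O}_p(1)$. Since $mN/n \to \lambda \in [0,1)$ we have $s \to \infty$ in probability, and the uniqueness of the largest component follows from the size gap $s \gg \max\{m, |\R(\mathsf{G}_{n,N})|\}$. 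I would then classify the uniformly random root $o_n$ by which of the three parts contains it; the corresponding vertex-fractions converge in probability to $1-\lambda$, $\lambda$ and $0$, so it suffices to compute the limit of $\mathbb{P}(B_k((\mathsf{G}_{n,N})_{o_n}, o_n) \simeq (G, o))$ on each hosting event.

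The remainder case is immediate since $\mathbb{P}(o_n \in \R(\mathsf{G}_{n,N})) = \mathbb{E}[|\R(\mathsf{G}_{n,N})|/n] \to 0$ by bounded convergence. For the size-$m$ contribution, let $d_i$ denote the multiplicity in $\mathsf{G}_{n,N}$ of the $i$-th element $H_i$ of $\C_m$ under an arbitrary enumeration. Any relabelling of the $c_m$ elements of $\C_m$ preserves both sizes and component counts, so the uniform measure on $\Aux_{n,N}$ is exchangeable in $(d_1, \dots, d_{c_m})$; in particular $\mathbb{E}[d_i] = \mathbb{E}[D]/c_m = N/c_m + \mathcal{O}(1)$. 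Thus the probability that $o_n$ is hosted by a copy of $H_i$ equals $m\,\mathbb{E}[d_i]/n \to \lambda/c_m$, and conditional on this event the root is uniform among the $m$ vertices of that copy; summing over $i$ produces the law $(\mathsf{C}_m, o_m)$ with total mass $\lambda$. For the giant, I would argue that conditional on $s$, $D$ and $\R(\mathsf{G}_{n,N}) = R$, the component $L$ is uniformly distributed on $\C_s$: indeed, each choice of $L \in \C_s$ extends to exactly one multiset in $\Aux_{n,N}$ with the prescribed decomposition data, provided $s$ strictly exceeds every other component size (which holds on our high-probability event). Combined with $s \to \infty$ in probability and the assumed BS convergence $\mathsf{C}_n \to (\mathbbmss{C}, \mathbbmss{o})$, bounded convergence then delivers the limit $(1-\lambda)\,\mathbb{P}(B_k(\mathbbmss{C}, \mathbbmss{o}) \simeq (G,o))$ for this case.

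The main technical step is the conditional-uniformity argument for the giant: it relies both on the uniqueness of the largest component and on the multiset-enumeration identity described above, but both are essentially free consequences of the size-gap supplied by Theorems~\ref{thm:3_size_largest_comp_L_n_N} and~\ref{thm:4_distribution_of_remainder_R_n_N}. Combining the three contributions via the law of total probability yields the announced mixture law $(1-\lambda)\delta_{(\mathbbmss{C},\mathbbmss{o})} + \lambda\delta_{(\mathsf{C}_m,o_m)}$; the case $N = o(n)$ corresponds to $\lambda = 0$, for which the size-$m$ mass vanishes and $(\mathbbmss{G},\mathbbmss{o}) = (\mathbbmss{C},\mathbbmss{o})$.
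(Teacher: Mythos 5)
Your proposal is correct and follows essentially the same route as the paper: decompose according to whether the uniform root lands in the giant, in a size-$m$ component, or in the remainder, then feed in Theorems~\ref{thm:3_size_largest_comp_L_n_N} and~\ref{thm:4_distribution_of_remainder_R_n_N} together with the assumed BS convergence of $(\mathsf{C}_n)_{n\ge1}$. The only difference is one of explicitness — you spell out the conditional uniformity of the giant on $\C_s$ and the exchangeability of the $\C_m$-multiplicities, both of which the paper's proof uses implicitly — so no further comparison is needed.
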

The proof is found in Section \ref{subsec:proof_BS}.
The authors of \cite{Georgalopoulos2016} show that any subcritical class $\C$ of connected unlabelled graphs fulfils the conditions of Proposition \ref{prop:BS_limit}. In the subcritical setting the BS limit of connected unlabelled rooted graphs is also the BS limit of the respective unrooted graphs as shown in \cite{Stufler2017}.
In particular, prior to these works it was shown in \cite{Stufler2019,Stufler2018_2} that the BS limits of unlabelled unrooted trees and of unlabelled rooted trees, also called P{\'o}lya trees, both exist and coincide. Additionally, this limit, say $(\mathbbmss{T},\mathbbmss{o})$, is made explicit in these publications.
\paragraph{Example (further continued)} \emph{
We obtain with Proposition \ref{prop:BS_limit} that the BS limit $(\mathbbmss{F},\mathbbmss{o})$ of $\mathsf{F}_{n,N}$ drawn uniformly at random from all unlabelled forests of size $n$ and being composed of $N$ trees has law, assuming that $N/n\to\lambda\in[0,1)$ as $n,N\to\infty$, 
\[
	(1-\lambda)\delta_{(\mathbbmss{T},\mathbbmss{o})}
	+ \lambda \delta_{\X},
\]
where $\X$ is a single rooted vertex. In other words, with probability $1-\lambda$ the neighbourhood of a uniformly at random chosen vertex from $\mathsf{F}_{n,N}$ looks like the infinite tree $\mathbbmss{T}$ and with probability $\lambda$ the neighbourhood is empty.
}

\paragraph{Proof Strategy} The main idea in the proof is to consider a randomized algorithm/stochastic process that generates $\cal C$-multisets. As it turns out, such an algorithm that outputs elements from $\Aux$ (with a priori \emph{no control} on the size or the number of components!) can be designed by defining the so-called Boltzmann distribution on $\Aux$, see Section~\ref{sec:setup_and_notation} for all details. The crucial property of this algorithm is that all choices it makes are \emph{independent}. Our first contribution is to establish explicitly the connection between the choices of the algorithm and its output; hence the probability that the output is in $\Aux_{n,N}$ can be linked to an event regarding the actual choices of the algorithm. Our second and main contribution is to actually compute the probability of this event; as we will see, this is not at all an easy task, since the involved random variables are not identically distributed and interfere in a complex way with the parameters of the generated object.

In contrast to this work, most proofs in the literature about enumeration of multisets are either conducted from a purely analytical generating function perspective or involve somehow the \emph{conditioning relation} representing the (heavily dependent) number of component frequencies in $\mathsf{G}_n$ of a particular size by independent random variables with negative binomial distributions. This is related to the alternative (and equivalent) representation of the generating function for $\cal C$-multisets~\eqref{eq:ogf_G_wo_weights} given by
\begin{equation}
\label{eq:alternativeG}
	G(x,y) = \prod_{k \ge 1} (1-yx^k)^{-c_k}.
\end{equation}
As opposed to analysing the component spectrum in \cite{Mutafchiev2011,Granovsky2006,Barbour2005,Arratia2003}, our arguments are in the style of \cite{Stufler2020,Panagiotou2018,Stufler2018,Stufler2018_2,Panagiotou2012}:  we use the P{\'o}lya-Boltzmann model representing $\mathsf{G}_n$ as random $\C$-objects attached to cycles of a random permutation, which is helpful to get rid of cumbersome appearances of symmetries and which gives rise to Poisson distributions instead of negative binomials; this difference is reflected by the two different representations~\eqref{eq:ogf_G_wo_weights} and~\eqref{eq:alternativeG}. Then we show that the size associated to fixpoints is dominant and the subexponentiality-feature often referred to as ``single big jump'' guarantees that in fact only one object receives the entire possible size.

\paragraph{Plan of the Paper}
Subsequently, we embed our results in the corpus of existing literature in Section \ref{sec:other_related_work}.
In Section \ref{subsec:discussion} we compare the labelled and the unlabelled setting in light of our results. Then we collect and prove some results about subexponential power series tailored to our needs in Section \ref{sec:subexp}. In Section \ref{sec:proofs} all proofs are presented, where each of the main results is treated in an extra subsection, such that Theorem \ref{thm:1_coeff_G(x,y)} is proven in Section \ref{subsec:proof_1}, Theorem \ref{thm:3_size_largest_comp_L_n_N} in Section \ref{subsec:proof_3} and Theorem \ref{thm:4_distribution_of_remainder_R_n_N} in Section~\ref{subsec:proof_4}.

\paragraph{Notation}
We shall use the following (standard) notation. Given two real-valued sequences $(a_k)_{k\in\Nat}$ and $(b_k)_{k\in\Nat}$ with $b_k \neq 0$ for all $k \ge k_0$ for some $k_0\in\Nat$, we write, as $n\to\infty$,
\begin{itemize}
\item[i)] $a_n \sim b_n$ (``$a_n$ is asymptotically equal to $b_n$'') if $\lim_{n\to\infty}a_n/b_n = 1$,
\item[ii)] $a_n \propto b_n$ (``$a_n$ is asymptotically proportional to $b_n$'') if there exist $0<A_1\le A_2$ such that 
\[
	A_1 
	\le \liminf_{n\to\infty}\bigl\lvert\frac{a_n}{b_n}\bigr\rvert 
	\le \limsup_{n\to\infty}\bigl\lvert\frac{a_n}{b_n}\bigr\rvert 
	\le A_2,
\]
\item[iii)] $a_n = o(b_n)$ if $\lim_{n\to\infty}a_n/b_n = 0$.
\end{itemize}
For a sequence of real-valued random variables $(X_k)_{k\in\Nat}$ and a non-negative sequence $(a_k)_{k\in\Nat}$ we write $X_n=\mathcal{O}_p(a_n)$ (``$X_n$ is stochastically bounded by $a_n$'') if for all $\eps>0$ there exists $K>0$ such that $\limsup_{n\to\infty}\pr{\lvert X_n\rvert \ge K a_n} \le \eps$. In the case $a_k \equiv 1$ we simply say ``$X_n$ is stochastically bounded''.

We  will use the following notation for formal power series. For a $k$-dimensional vector of formal variables $\mathbf{x} = (x_1,\dots,x_k)$ and $\mathbf{d}=(d_1,\dots,d_k)\in\Nat_0^k$ we write $\mathbf{x}^{\mathbf{d}}$ for the monomial $x^{d_1}_1\cdots x^{d_k}_k$. A multivariate power series with real-valued coefficients is given by $A(\mathbf{x}) = \sum_{\mathbf{d}\in\Nat_0^k}a_{\mathbf{d}}\mathbf{x}^{\mathbf{d}}$, where the $a_{\mathbf{d}}$'s are in $\mathbb{R}$. For $\mathbf{d}\in\Nat_0^k$ we write $[\mathbf{x}^{\mathbf{d}}]A(\mathbf{x}) = a_{\mathbf{d}}$ for the coefficient of $\mathbf{x}^{\mathbf{d}}$.

\subsection{(More) Related Work}
\label{sec:other_related_work}
In this section we put our results in the broader context of (asymptotic) enumeration of multisets/weighted integer partitions. The most prominent assumption is that the counting sequence of $\C$ fulfils $c_n \sim \lambda(n)\cdot n^{-\alpha}\cdot \rho^{-n}$ as $n\to\infty$ 
for some slowly varying function $\lambda(\cdot)$ and parameters $\alpha\in\Real, 0<\rho\le 1$. Then there emerge three cases depending on the parameter $\alpha$ determining the behaviour of $C(x)=\sum_{k\ge 1}c_kx^k$ at or near its radius of convergence $\rho$, each giving rise to a fundamentally different picture.

In the \emph{expansive} case $\alpha<1$, which in particular includes the classical and prominent setting $c_k \equiv 1$ of integer partitions, the number of $\cal C$-multisets $g_n = |{\cal G}_n|$ is well-understood \cite{Granovsky2006}. However, general results about the uniformly drawn element $\mathsf{G}_n$ -- in particular the distribution of the number of components that is of interest here -- are not known without any extra conditions. For example, under the assumption of \emph{Meinardus} scheme of conditions, a set of analytic assumptions that in particular imply that $\rho=1$, the number of components of $\mathsf{G}_n$ fulfils various local limit theorems, see \cite{Mutafchiev2011}.
The authors of \cite{Granovsky2008} state that expansive multisets with counting sequence $c_k=Ck^{-\alpha}$ for some $C>0$ fulfil Meinardus conditions. For example, integer partitions $(c_k=1)$ and plane partitions $(c_k=k)$ are encapsulated by this approach. Hence, they call the Meinardus case \emph{quasi-expansive}. 
For quasi-expansive sequences it is established in~\cite{Mutafchiev2013} that the size of the largest component of $\mathsf{G}_n$ is with high probability of size $\Theta(1)\cdot n^{1/(2-\alpha)}\log n$. 
The broader picture here is that the number of components in $\mathsf{G}_n$ is typically unbounded and the size of the largest component is sublinear in $n$. On the other hand, much less is known for the number of $\cal C$-multisets with $N$ components $g_{n,N} = |{\cal G}_{n,N}|$ and the typical shape of $\mathsf{G}_{n,N}$ for arbitrary $1 \le N \le n$. Nevertheless, there is one important exception, namely the case of integer partitions. There, depending on whether $N$ is $\mathcal{O}(n^{1/2})$ or $\omega(n^{1/2})$ the asymptotic behaviour of the number of partitions of $n$ into $N$ parts is given by different formulas, see \cite{Knessl1990}. For $N \ge (1+\eps)/(\sqrt{2/3}\pi)\sqrt{n}\log n$ for any $0<\eps<1$ it is even true that $g_{n,N}\sim g_{n-N}$ \cite{Hwang1997}. In general, it is reasonable to conjecture that the shape of $\mathsf{G}_{n,N}$ depends on the asymptotic regime of $N$; this, however, is a topic for a completely different paper. 

The \emph{logarithmic} case, where $\alpha=1$ and $\lambda\equiv\lambda(\cdot)$ constant, is concomitant with similar effects. The number of components in $\mathsf{G}_n$ is typically of order $\lambda \log n$ \cite[Thm.~8.21]{Arratia2003} and, denoting by $(L_1,\dots,L_N)$ the $N$ largest component sizes of $\mathsf{G}_n$, then $n^{-1}(L_1,\dots,L_N)$ has a limiting distribution that is Poisson-Dirichlet \cite[Thm.~6.8]{Arratia2003} implying that $\mathsf{G}_n$ is composed of several ``large'' objects. The same is  true for $\mathsf{G}_{n,N}$ with $N\in\Nat$ fixed, where the $N-1$ smallest components have with high probability sizes $(n^{U_i})_{1\le i\le N-1}$ for iid uniformly distributed random variables $(U_i)_{1\le i\le N-1}$ \cite[Thm.~6.9]{Arratia2003}.
To our knowledge, there are no results (regarding asymptotic enumeration or structural properties) for all other $N$.

In contrast to all previous cases, the phenomenon of condensation is observed in the \emph{convergent} case, where $\alpha>1$: the single largest component of $\mathsf{G}_n$ is of size $n-\mathcal{O}_p(1)$ and its number of components converges in distribution, see \cite{Barbour2005}. In accordance with the results observed for the convergent case the number of components in $\mathsf{G}_n$ in the subexponential setting has a limiting distribution given by a weighted sum of independent Poisson random variables \cite{Bell2000}. Equivalently, this means that $g_{n,N}$ can be determined asymptotically for \emph{fixed} values of $N\in\Nat$. As for the global shape of $\mathsf{G}_n$, the results in \cite{Stufler2020} imply that the remainder obtained after removing the largest component converges in distribution to a limit given by the so-called P{\'o}lya-Boltzmann distribution, see~\cite{Bodirsky2011}. 

\subsection{Discussion - The Unlabelled vs.~the Labelled Setting}
	\label{subsec:discussion}
In what follows we will have a closer look at the resemblances and surprising disparities between multisets, which are typically associated to \emph{unlabelled} structures, and sets of \emph{labelled} combinatorial structures. We refer the reader to the books \cite{Flajolet2009,Leroux1998} for an excellent exposition to combinatorial classes. Another vast source of references and examples is the tour-de-force paper \cite{Janson2012} entailing many results about the \emph{balls-in-boxes} model (Section 11), which by choosing the weight sequence $(c_k/k!)_{k\in\Nat}$ implies the labelled set-construction.

Given a labelled class $\C^\iota$  we may form the analogon to the multiset construction discussed in this work. Initially we pick $C_1,\dots,C_k$ from $\C^{\iota}$ and let $n$ be the total size. Then we partition $\{1,\dots,n\}$ into sets $L_1,\dots,L_k$ such that $L_i=\lvert C_i\rvert$ for all $i$. Subsequently, we canonically assign the labels in $L_i$ to $C_i$ for all $i$. The outcome of this procedure is a labelled set, where each of the labels in $\{1,\dots,n\}$ appears exactly once. The notion of size and number of components carries over from the multiset construction. Let us call the collection of all such labelled sets $\Aux^\iota = \scs{Set}(\C^\iota)$ and introduce the sets $\Aux^\iota_n$ and $\Aux^\iota_{n,N}$ of objects in $\Aux^\iota$ of size $n$ and of size $n$ having $N$ components, respectively. Further, let $c^\iota_k := \lvert \{C\in\C^\iota:\lvert C\rvert=k\}\rvert$ for $k\in\Nat$.
Similarly to \eqref{eq:ogf_G_wo_weights} the bivariate (exponential) generating series related to this case is known (\cite{Flajolet2009,Leroux1998}) to be
\[
	G^\iota(x,y)
	= \e{ y C^\iota(x) },
	\quad
	\text{where  }
	C^\iota(x)
	= \sum_{k\ge 1}c_k^\iota \frac{x^k}{k!}.
\]
In complete analogy to the unlabelled case we are here intrerested in the number $g^\iota_{n,N} = \lvert\Aux^\iota_{n,N}\rvert$ of sets of size $n$ and $N$ components, and for properties of the uniform random elements $\mathsf{G}^\iota_n$ and $\mathsf{G}^\iota_{n,N}$ from ${\cal G}^\iota_n$ and ${\cal G}^\iota_{n,N}$, respectively.

First of all, the case $N = \bigO{1}$ is treated (together with the unlabelled case) in~\cite{Bell2000}. There it is shown that in the subexponential setting both $\kappa(\mathsf{G}_n)$ and $\kappa(\mathsf{G}^\iota_n)$ converge in distribution; hence, for a fixed number of components the labelled and unlabelled cases behave qualitatively the same. Also the global structure of the associated random variables $\mathsf{G}_n$ and $\mathsf{G}^\iota_n$ is in both cases governed by the same condensation effect, see~\cite{Stufler2018,Stufler2020}.
However, the situation changes as $N \to \infty$.
The works \cite{Panagiotou2018_2,Janson2012} treat this topic extensively\footnote{In particular we want to highlight \cite[Theorems 18.12, 18.14, 19.34, 19.49]{Janson2012}.}: under the condition that $c_n^\iota\sim bn^{-(1+\alpha)}\rho^{-n}n!$ for $b>0$ and $\alpha>1$ as $n\to\infty$ there emerges a ``trichotomy" ($1<\alpha\le 2$) and in some cases a ``dichotomy" ($\alpha>2$) depending on the asymptotic regime of $N$. 
To illustrate the nature of these results, let us consider the class of labelled  trees $\T^\iota$ such that $\F^\iota = \scs{Set}(\T^\iota)$ is the class of labelled forests. The well-known formula by Cayley states that $t^\iota_n  = n^{n-2} \sim (2\pi)^{-1/2}n^{-5/2} \eul^n n!$, so that $\alpha = 3/2$. Abbreviating by $f^\iota_{n,N}$ the number of forests on $n$ nodes and $N$ trees, the following detailed result exposing two phase transitions is known. Let $N:=\floor{\lambda n}$, then
\begin{align*}
	\frac{N!}{n!} \cdot f^{\iota}_{n,N} \sim
	\begin{cases}
		c_{-}(\lambda) n^{-3/2} \eul^n 2^{-N}, & \lambda \in (0,1/2) \\
		c n^{-2/3} \eul^n 2^{-N}, & \lambda = 1/2 \\
		c_{+}(\lambda) n^{-1/2} f(\lambda)^n, &\lambda\in(1/2,1) 
	\end{cases},
	\quad
	n\to\infty,
\end{align*}
for positive real-valued continuous functions $c_{-/+}(\lambda), f(\lambda)$ and a constant $c$; note that the critical exponent jumps form $3/2$ to $2/3$ and then to $1/2$. All in all, the main results of the present paper reveal substantial differences between the labelled and the unlabelled case already at the level of the counting sequences: as we stated before in our example, the number of unlabelled forests of size $n$ with $N = \floor{\lambda n}$ components is asymptotically equal to $A\cdot (1-\lambda)^{-5/2} n^{-5/2} \rho^{n-N}$; in particular, the critical exponent does not vary. 

The aforementioned variation in the critical exponent has also important consequences for the global structure of a  labelled forest $\mathsf{F}^\iota_{n,N}$ drawn uniformly at random from the set of labelled forests of size $n$ composed of $N$ trees.  Three different cases emerge as $n$ approaches infinity: 
\begin{enumerate}
\item In the case where there are ``few'' components ($0 < \lambda < 1/2$), most of the mass is concentrated in one large tree containing a linear fraction (that is  $1 - 2\lambda$) of all nodes and the remaining $N-1$ trees all have size $\mathcal{O}_p(n^{2/3})$.
\item In the case where the ratio between components and total size is ``balanced'' ($\lambda = 1/2$) all trees have size $\mathcal{O}_p(n^{2/3})$.
\item Whenever there are ``many'' components with respect to the total number of nodes ($1/2 < \lambda < 1$), all trees are small in the sense that their size is stochastically bounded by $\log n$. 
\end{enumerate}
For a detailed discussion for what happens near the critical point $\lambda=1/2$ see also \cite{Tomasz1992}. This is again substiantially different to the unlabelled case, where we showed that extreme condensation dominates the picture for all values of $N$.

\section{Subexponential Power Series}
	\label{sec:subexp}
In this section we collect (and prove) some properties of subexponential power series that will be quite handy in the rest of paper. Many of the definitions and statements shown here are taken from \citet*{Embrechts1984} or \citet*{Foss2013} and adapted to the discrete case, see also~\citet{Stufler2020}.
\begin{definition}
\label{def:subexp}
A power series $C(x)=\sum_{k \ge 0} c_k x^k$ with non-negative coefficients and radius of convergence $0 < \rho < \infty$ is called \emph{subexponential} if 
\begin{align}
	\frac{1}{c_n} \sum_{0\le k \le n} c_{n-k} c_k &\sim 2 C(\rho) < \infty \tag{$S_1$}\label{eq:subexp1}
	\quad\text{and} \\
	\frac{c_{n-1}}{c_n} &\sim \rho, \quad n\to\infty. \tag{$S_2$}
\label{eq:subexp2}
\end{align}
\end{definition}
Note that the radius of convergence of a power series $C(x)$ satisfying~\eqref{eq:subexp2} (in particular of any subexponential power series) is $\rho$ and that eventually $[x^n]C(x) > 0$, where as usual, $[x^n]C(x) = c_n$ denotes the coefficient of $x^n$ in $C(x)$.
Any arbitrary subexponential power series $C(x)$ with radius of convergence $\rho$ induces the probability generating series of a $\Nat_0$-valued random variable by setting
\[
	d_k := \frac{c_k \rho^k}{C(\rho)}, \quad n\in\mathbb{N}_0.
\]
Then $D(x) = \sum_{k \ge 0} d_k x^k$ is subexponential with $\rho=1$ and $D(\rho) = 1$. There are several results about the asymptotic behaviour of sums of random variables with such a subexponential generating series. Here we will need Lemma \ref{lem:subexp_several_prop} \ref{lem:random_stopped_sum} below, which corresponds to determining the probability that a randomly stopped sum of random variables with distribution $(d_k)_{k\ge 0}$ attains a large value.
Moreover, Lemma \ref{lem:subexp_several_prop} \ref{lem:long_sum_exp} will be particularly useful, since it provides bounds holding uniformly in the given parameters. 
In Lemma \ref{lem:subexp_several_prop} \ref{lem:big_jump_principle} we present and prove a statement often referred to -- with various interpretations -- as ``principle of a single big jump''. The dominant contribution to a large sum of subexponential random variables stems typically from one single summand.
\begin{lemma}
\label{lem:subexp_several_prop}
Let $(D_i)_{i \in \Nat}$ be iid $\Nat_0$-valued random variables with probability generating function $D(x)$. Assume that $D(x)$ is subexponential with radius of convergence $1$. For $p\in\Nat$ let $S_p := \sum_{1\le i \le p} D_i$ and $M_p := \max\{D_1,\dots,D_p\}$. Then the following statements are true. 
\begin{enumerate}[label=(\roman*)]
\item 
\label{lem:random_stopped_sum} \cite[Theorem 4.30]{Foss2013}
Let $\tau$ be a $\Nat_0$-valued random variable independent of $(D_i)_{i\in\Nat}$. Further, assume that the probability generating function of $\tau$ is analytic at $1$. Then
\[
	\pr{S_\tau = n} \sim \ex{\tau} \pr{D_1 = n},
	\quad n\to\infty.
\]
\item
\label{lem:long_sum_exp} \cite[Theorem 4.11]{Foss2013}
For every $\delta > 0$ there exists $n_0 \in \Nat$ and a  $C>0$ such that
\[
	\pr{S_p = n}
	\le C (1 + \delta)^p \pr{D_1 = n},
	\quad
	\text{for all } n\ge n_0,~ p\in\Nat.
\]
\item
\label{lem:big_jump_principle}
For any $p\ge 2$
\[
	( M_p \mid S_p = n )
	= n + \mathcal{O}_p(1), 
	\quad n\to\infty.
\]
\end{enumerate}
\end{lemma}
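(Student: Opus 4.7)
Since $0 \le M_p \le S_p = n$ on the conditioning event, the statement $(M_p \mid S_p = n) = n + \mathcal{O}_p(1)$ is equivalent to showing that for every $\varepsilon > 0$ there exists $K \in \Nat$ with
\[
	\liminf_{n \to \infty} \pr{M_p \ge n - K \mid S_p = n} \ge 1 - \varepsilon.
\]
The intuition is the single big jump principle: conditional on a large sum, exactly one summand carries nearly all the mass.

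To turn this into a calculation, I would first exploit a pigeonhole observation. For any fixed $K$ and all $n > 2K$, two distinct indices $i \ne j$ cannot simultaneously satisfy $D_i \ge n - K$ and $D_j \ge n - K$ while $S_p = n$, because this would force $S_p \ge 2(n - K) > n$. Consequently, inclusion-exclusion collapses to a single term and symmetry gives
\[
	\pr{M_p \ge n - K,\, S_p = n} = p \cdot \pr{D_1 \ge n - K,\, S_p = n}.
\]
Decomposing the right-hand side according to the value $D_1 = n - j$ for $0 \le j \le K$ and using independence yields
\[
	\pr{D_1 \ge n - K,\, S_p = n} = \sum_{j=0}^{K} \pr{D_1 = n - j}\, \pr{S_{p-1} = j}.
\]

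Next I would invoke the subexponentiality condition \eqref{eq:subexp2} (with $\rho = 1$), which iterated as a telescoping product $\pr{D_1 = n - j}/\pr{D_1 = n} = \prod_{i=1}^{j} c_{n-i}/c_{n-i+1}$ gives, for each fixed $j \le K$, the asymptotic $\pr{D_1 = n - j} \sim \pr{D_1 = n}$; since the sum is finite, this upgrades to $\sum_{j=0}^K \pr{D_1 = n - j}\pr{S_{p-1}=j} \sim \pr{D_1 = n} \cdot \pr{S_{p-1} \le K}$. Applying part~\ref{lem:random_stopped_sum} of the present lemma to the constant stopping time $\tau \equiv p$ (whose probability generating function $x^p$ is trivially analytic at $1$) delivers $\pr{S_p = n} \sim p \cdot \pr{D_1 = n}$. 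Combining these asymptotics,
\[
	\pr{M_p \ge n - K \mid S_p = n} \longrightarrow \pr{S_{p-1} \le K}, \qquad n \to \infty.
\]
Since $S_{p-1}$ is an almost-surely finite integer-valued random variable, $\pr{S_{p-1} \le K} \to 1$ as $K \to \infty$, so one may fix $K$ large enough to make the limit exceed $1 - \varepsilon$.

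The plan involves no serious obstacles: the inclusion-exclusion truncation is automatic once $n > 2K$, and the use of subexponentiality is a finite sum of asymptotic equivalences, hence rigorous without any uniform estimate like part~\ref{lem:long_sum_exp}. The only delicate point is the telescoping estimate $c_{n-j}/c_n \to 1$ as $n \to \infty$ with $j \le K$ fixed, which is immediate from \eqref{eq:subexp2} applied $j$ times and the fact that $K$ does not grow with $n$.
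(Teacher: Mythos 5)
Your proof is correct, and it takes a genuinely different route from the paper's. The paper attacks the complementary event: it writes $\pr{\lvert M_p-n\rvert\ge K\mid S_p=n}$ as a sum over all $k\ge K$ of $\pr{M_p=n-k\mid S_p=n}$, bounds each term by a union bound, and then — because $k$ ranges all the way up to $n$ — needs the uniform Kesten-type estimate of part~\ref{lem:long_sum_exp} to control $\pr{S_{p-1}=k}$ and finally the convolution property \eqref{eq:subexp1} to make the tail sum $\sum_{k\ge K}\pr{D_1=n-k}\pr{D_1=k}/\pr{D_1=n}$ small. You instead compute the probability of the good event $\{M_p\ge n-K\}$ exactly: the disjointness of the events $\{D_i\ge n-K\}\cap\{S_p=n\}$ for $n>2K$ reduces everything to the finite sum $\sum_{j=0}^{K}\pr{D_1=n-j}\pr{S_{p-1}=j}$, so only the local ratio condition \eqref{eq:subexp2} and part~\ref{lem:random_stopped_sum} (applied to $\tau\equiv p$, exactly as the paper also does for the denominator) are needed — no uniform estimates at all. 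Your argument is the more elementary of the two and yields as a free byproduct the stronger statement that $n-M_p$ conditioned on $S_p=n$ converges in distribution to $S_{p-1}$; the paper's approach, by contrast, delivers explicit geometric-type tail bounds on $n-M_p$ that are uniform in $n$, which is closer in spirit to the uniform estimates it needs elsewhere (e.g.\ in Lemma~\ref{lem:P_1=p_R_ge_r_cond_P_N_wo_weights}). Both are rigorous; the one point worth making explicit in your write-up is that \eqref{eq:subexp2} guarantees $\pr{D_1=n}>0$ for all large $n$, so the telescoping ratios are well defined.
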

\begin{proof}[Proof of Lemma \ref{lem:subexp_several_prop} \ref{lem:big_jump_principle}] 
Let $\eps>0$ be arbitrary. To prove the claim we will establish the existence of $K\in\Nat$ such that
\[
	\lim_{n\to\infty} \pr{\lvert M_p-n\rvert \ge K\mid S_p = n} 
	< \eps.
\]
Clearly under the condition $S_p = n$ we have $M_p \le n$. Thus 
\begin{align}
\label{eq:|M-n|geK_bigjump}
	\pr{\lvert M_p - n \rvert \ge K\mid S_p=n}
	= \sum_{k\ge K} \pr{ M_p = n-k \mid S_p=n}.
\end{align}
Since $D_1,\dots,D_p$ are iid we obtain for any $k\ge K$
\[
	\pr{ M_p = n-k \mid S_p=n}
	\le  \pr{\bigcup_{1 \le i \le p} \{D_i = n-k\} \mid S_p = n}
	= p \frac{\pr{D_1 = n-k} \pr{S_{p-1}=k}}{\pr{S_p=n}}.
\]
Together with Lemma \ref{lem:subexp_several_prop} \ref{lem:long_sum_exp} we find some constant $C>0$ such that for $k\ge K$ sufficiently large
\[
	\pr{S_{p-1} = k}
	\le C (1+\eps)^{p-1}\pr{D_1=k}.
\]
Part \ref{lem:random_stopped_sum} justifies for $n$ sufficiently large that
$
	\pr{S_p = n}
	\ge (1-\eps) p \pr{D_1 = n}.
$
All in all, for a suitably chosen constant $C(p)$ the expression in \eqref{eq:|M-n|geK_bigjump} can be estimated by 
\[
	\pr{\lvert M_p - n \rvert \ge K\mid S_p=n}
	\le C(p) \sum_{k\ge K} \frac{\pr{D_1 = n-k}\pr{D_1 = k}}{\pr{D_1=n}}.
\]
Due to property \eqref{eq:subexp1} we conclude that this  is smaller than $\eps$ choosing $K$ large enough and the proof is finished.
\end{proof}
The following lemma establishes asymptotics for the coefficients of the product of two power series.
\begin{lemma}{\cite[Thm.~3.42]{Burris2001} or \cite[Ex.~178]{Polya1970}}
\label{lem:coeff_product}
Let $A(x), B(x)$ be power series such that $A$ satisfies~\eqref{eq:subexp2}. Assume that the radii of convergence $\rho_A$ and $\rho_B$ of $A$ and $B$, respectively, satisfy $0 < \rho_A < \rho_B$ and $B(\rho_A) \neq 0$. Then
\[
	[x^n]A(x)B(x) \sim B(\rho_A) \cdot [x^n]A(x), 
	\quad n\to \infty.
\]
\end{lemma}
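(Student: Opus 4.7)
The plan is to expand the coefficient as the convolution
\[
[x^n]A(x)B(x) \;=\; \sum_{k=0}^n a_{n-k}\, b_k,
\]
and compare it term by term to the candidate limit $B(\rho_A)\,a_n = \bigl(\sum_{k\ge 0} b_k \rho_A^k\bigr) a_n$. The key mechanism is that~\eqref{eq:subexp2}, iterated, yields the pointwise convergence $a_{n-k}/a_n \to \rho_A^k$ as $n\to\infty$ for every fixed $k$; the challenge is to upgrade this to convergence of the full convolution, and this is exactly where the strict inequality $\rho_A<\rho_B$ will be used.

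First I would fix a small $\eps>0$ and a radius $r$ with $\rho_A(1+\eps)<r<\rho_B$. From~\eqref{eq:subexp2} there exists $n_0$ such that $\rho_A/(1+\eps)\le a_{n-1}/a_n \le \rho_A(1+\eps)$ for every $n\ge n_0$; iterating the upper inequality yields the uniform estimate $a_{n-k}/a_n \le (\rho_A(1+\eps))^k$ whenever $n-k\ge n_0$, while iterating the lower inequality delivers $a_n \ge A^{-}\,(\rho_A(1+\eps))^{-n}$ for some positive constant $A^{-}$ and all $n\ge n_0$. From the definition of $\rho_B$, there is a constant $B^{+}>0$ with $|b_k|\le B^{+} r^{-k}$ for all $k$.

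Next I would split the convolution, for a large parameter $M$, into
\[
\sum_{k=0}^n a_{n-k}\, b_k \;=\; \sum_{k=0}^M a_{n-k}\, b_k \;+\; \sum_{k=M+1}^{n-n_0} a_{n-k}\, b_k \;+\; \sum_{k=n-n_0+1}^n a_{n-k}\, b_k,
\]
and analyse each piece after division by $a_n$. For the \emph{head}, pointwise convergence plus the finite number of summands give $a_n^{-1}\sum_{k\le M} a_{n-k}b_k \to \sum_{k\le M} b_k\rho_A^k$, which approaches $B(\rho_A)$ as $M\to\infty$. For the \emph{bulk tail}, the uniform bound on $a_{n-k}/a_n$ yields $a_n^{-1}\sum_{k=M+1}^{n-n_0} a_{n-k}|b_k|\le \sum_{k>M}|b_k|\,(\rho_A(1+\eps))^k$, a convergent series (since $\rho_A(1+\eps)<\rho_B$) whose tail vanishes as $M\to\infty$. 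For the \emph{extreme tail}, only $n_0$ terms occur, each $a_{n-k}$ is bounded by $A^{*}:=\max_{0\le j<n_0} a_j$, and the lower bound on $a_n$ yields $a_n^{-1}\bigl|\sum_{k=n-n_0+1}^n a_{n-k}b_k\bigr|=O\bigl((\rho_A(1+\eps)/r)^n\bigr)=o(1)$.

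A standard $\eta/3$-argument combining these three estimates finishes the proof: given $\eta>0$, first pick $M$ so that the bulk tail bound and $\bigl|B(\rho_A)-\sum_{k\le M}b_k\rho_A^k\bigr|$ are each $<\eta/3$, then choose $n$ large enough to make the head converge to within $\eta/3$ of its limit and the extreme tail to be $<\eta/3$. The main obstacle is the bulk tail: it requires trading the inflated exponential factor $(\rho_A(1+\eps))^k$ from the upper bound on $a_{n-k}/a_n$ against the geometric decay of $b_k$, and this is precisely where the hypothesis $\rho_A<\rho_B$ is consumed. Since $B(\rho_A)\neq 0$, the resulting convergence $a_n^{-1}[x^n]A(x)B(x)\to B(\rho_A)$ is equivalent to the asserted asymptotic equivalence.
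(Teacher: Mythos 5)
Your proof is correct and is essentially the classical argument behind the cited result (the paper itself gives no proof of Lemma~\ref{lem:coeff_product}, deferring to \cite{Burris2001} and \cite{Polya1970}): expand the Cauchy product, handle the head by the pointwise limit $a_{n-k}/a_n\to\rho_A^k$, control the bulk by trading the uniform bound $a_{n-k}/a_n\le(\rho_A(1+\eps))^k$ against the geometric decay $|b_k|\le B^{+}r^{-k}$ with $\rho_A(1+\eps)<r<\rho_B$, and kill the $n_0$ extreme-tail terms with the exponential lower bound on $a_n$. The only cosmetic slip is the attribution of that lower bound: $a_n\ge A^{-}(\rho_A(1+\eps))^{-n}$ follows from iterating the \emph{upper} ratio inequality $a_{n-1}\le\rho_A(1+\eps)a_n$ rather than the lower one, but the bound itself is valid, so nothing breaks.
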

Note that Lemma \ref{lem:coeff_product} does not require $A$ to be subexponential, neither does it require that $B$ has non-negative coefficients only. We will later apply the lemma with (powers of)
\[
	A(x) 
	:= \frac{1}{1-\rho_A^{-1} x} = \sum_{k \ge 0} (\rho_A^{-1}x)^k
\]
for some $\rho_A>0$. Then $A(x)$ has $\eqref{eq:subexp2}$ with radius of convergence $\rho_A$ but $A(\rho_A)=\infty$; in particular, property $\eqref{eq:subexp1}$ is not satisfied and $A$ is not subexponential.

As a final remark in this section we make the following observation, which we shall use mostly without further reference. Suppose that for two sequences $(a_n)_{n\in\Nat_0}, (b_n)_{n\in\Nat_0}$ in $\mathbb{R}$ we know that $a_n \sim b_n$ as $n \to \infty$. Then the ratio $a_n / b_n$ is bounded unless $b_n = 0$, that is, 
\begin{equation}
\label{eq:asimbuniform}
	a_n \sim b_n
	\implies 
	\text{there is $A > 0$ such that }
	a_n \le A |b_n| \text{ for all $n\in \Nat_0$ such that $b_n \neq 0$}.
\end{equation}

\section{Proofs}
	 \label{sec:proofs}

We briefly (re-)collect all assumptions and fix the notation needed in this section. Note that Theorem \ref{thm:1_coeff_G(x,y)} is valid for real-valued sequences $(c_k)_{k\in\Nat}$ whereas the remaining theorems are only reasonable in a combinatorial setting. We will hence put everything into a combinatorial context and outsource the few modifications needed for the general real-valued setting into Section \ref{sec:remark_real-valued}.

For a combinatorial class $\C$ let $C(x) = \sum_{k \ge 1} c_k x^k$ denote a power series with coefficients $c_k := \lvert\{C\in\C:\lvert C\rvert =k \}\rvert$, $k\in\Nat$. Further, let 
\begin{equation}
\label{eq:mc}
	m = m_C := \min\{ k\in\Nat ~:~ c_k > 0\}
\end{equation}
be the index of the first coefficient that does not equal zero. We also assume that $C(x)$ is subexponential, which implies that the radius of convergence fulfils $0<\rho<1$. However, the subexponentiality feature is only needed in the very last step of the proof, cf.~Lemma~\ref{lem:probability_E_n_P_N_asymptotic_wo_weights}; all other statements preceding this lemma are valid even without this asssumption as long as $0<\rho<1$.
Further we define $G(x,y) := \e{\sum_{j\ge 1} C(x^j)y^j/j}$ and $G(x) := G(x,1)$.
We begin with two auxiliary statements. The first one is about the radius of convergence of $G(x)$.
\begin{lemma}
\label{lem:(roc)Gfinite}
Assume that $C(x)$ is a power series with non-negative real-valued coefficients and radius of convergence $0 < \rho < 1$ and $C(0) = 0$, $C(\rho) < \infty$. Then $G(x)$ has radius of convergence $\rho$ and $G(\rho) < \infty$.
\end{lemma}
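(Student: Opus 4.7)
The plan is to establish the two claims separately: first that $G(\rho)$ is finite (which gives radius of convergence at least $\rho$), and second that the coefficients of $G(x)$ dominate those of $C(x)$ (which gives radius of convergence at most $\rho$).

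For the first part, the key observation is that since $C(0)=0$, every coefficient with index $0$ vanishes, so for $j \ge 1$ one has $C(\rho^j) = \sum_{k\ge 1} c_k \rho^{jk}$. Factoring out the worst case $\rho^{(j-1)k} \le \rho^{j-1}$ (valid because $k \ge 1$ and $\rho < 1$) yields the comparison
\[
    C(\rho^j) \le \rho^{j-1} C(\rho), \qquad j \ge 1.
\]
Plugging this into the defining series gives $\sum_{j\ge 1} C(\rho^j)/j \le C(\rho) \sum_{j\ge 1} \rho^{j-1} = C(\rho)/(1-\rho) < \infty$, and therefore $G(\rho) = \exp(\sum_{j\ge 1} C(\rho^j)/j) < \infty$. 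Since $G$ has non-negative coefficients (every term in the expansion of $\exp$ of a series with non-negative coefficients is non-negative), finiteness at $\rho$ together with non-negativity of the coefficients forces the radius of convergence of $G$ to be at least $\rho$.

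For the second part, I would expand the exponential as a power series. The $k=0$ term contributes $1$ and every $k\ge 2$ term contributes a power series with non-negative coefficients; hence, coefficientwise,
\[
    G(x) \ge 1 + \sum_{j\ge 1} \frac{C(x^j)}{j} \ge 1 + C(x).
\]
In particular, $[x^n] G(x) \ge c_n$ for every $n \ge 1$. Since $(c_n)$ has radius of convergence exactly $\rho$ (by the hypothesis on $C$), the radius of convergence of $G$ cannot exceed $\rho$. Combining this with the lower bound from the first part yields equality, finishing the proof.

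I do not expect any serious obstacle: the whole argument consists of a simple telescoping-style bound that exploits $C(0)=0$ to gain the geometric decay factor $\rho^{j-1}$, and a term-by-term comparison of power series. The only point that deserves care is noting that $C(0)=0$ is exactly what makes the sum $\sum_{j\ge 1} C(\rho^j)/j$ convergent at the boundary $x=\rho$; without this hypothesis the $j\to\infty$ terms would not die off and $G(\rho)$ could diverge.
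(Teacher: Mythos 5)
Your proof is correct and follows essentially the same route as the paper: the central estimate $C(\rho^j)\le\rho^{j-1}C(\rho)$, obtained from $C(0)=0$ and $\rho<1$, is exactly the bound the paper uses (there applied at $(1+\eps)\rho^j$ to the tail $j\ge 2$ after factoring $G(x)=e^{C(x)}H(x)$). Your coefficientwise comparison $[x^n]G(x)\ge c_n$ for the upper bound on the radius of convergence is a minor repackaging of the paper's product argument and is equally valid.
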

\begin{proof}
From the definition of $G$ we obtain that $G(x) = e^{C(x)}H(x)$, where $\log H(x) = \sum_{j \ge 2} C(x^j)/j$. Since $\rho \in (0,1)$ we obtain for any $\eps > 0$ such that $(1+\eps)\rho^2 < \rho$ and any $j \ge 2$
\[
	C\big((1+\eps)\rho^j\big)
	= \sum_{k \ge 1} c_k (1+\eps)^k\rho^{jk}
	= (1+\eps)\rho^j \sum_{k \ge 1} c_k ((1+\eps)\rho^j)^{k-1}
	< 
	 (1+\eps)\rho^{j-1} C(\rho).
\]
In particular, $H((1+\eps)\rho) < \infty$ and the radius of convergence of $H$ is larger than $\rho$. Thus, the radius of convergence of $G$ is $\rho$, and $G(\rho) = e^{C(\rho)}H(\rho)< \infty$.
\end{proof}
The second statement is a purely technical result that we will be handy.
\begin{lemma}{\cite[Theorem VI.1]{Flajolet2009}}
\label{lem:asymptpowergeom}
Let $\alpha,\beta\in\Real_+$. Then
\[
	[x^n] (1-\beta x)^{-\alpha} 
	\sim \frac{n^{\alpha -1}}{\Gamma(\alpha)} \beta^{n},
	\quad
	n\to\infty.
\]
\end{lemma}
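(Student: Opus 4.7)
The plan is to reduce the problem to the classical asymptotics of generalized binomial coefficients. First, I would factor out $\beta$ by the simple rescaling
\[
	[x^n](1-\beta x)^{-\alpha}
	= \beta^n [x^n](1-x)^{-\alpha},
\]
so that the claim reduces to showing $[x^n](1-x)^{-\alpha}\sim n^{\alpha-1}/\Gamma(\alpha)$.

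Second, I would invoke the generalized binomial theorem, valid for all $\alpha \in \Real_+$, which yields
\[
	(1-x)^{-\alpha}
	= \sum_{n\ge 0}\binom{-\alpha}{n}(-x)^n
	= \sum_{n\ge 0}\binom{n+\alpha-1}{n}x^n,
\]
and hence
\[
	[x^n](1-x)^{-\alpha}
	= \binom{n+\alpha-1}{n}
	= \frac{\Gamma(n+\alpha)}{\Gamma(\alpha)\,\Gamma(n+1)}.
\]

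Third, I would dispatch the asymptotics via Stirling's formula $\Gamma(z+1)\sim\sqrt{2\pi z}\,(z/\eul)^z$ applied to both $\Gamma(n+\alpha)$ and $\Gamma(n+1)$. A short calculation gives
\[
	\frac{\Gamma(n+\alpha)}{\Gamma(n+1)}
	\sim n^{\alpha-1},
	\qquad n\to\infty,
\]
so combining the three steps produces $[x^n](1-\beta x)^{-\alpha}\sim \beta^n n^{\alpha-1}/\Gamma(\alpha)$, as claimed.

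There is no real obstacle here: the entire argument is a routine application of the generalized binomial theorem followed by Stirling. The only minor subtlety is confirming the ratio $\Gamma(n+\alpha)/\Gamma(n+1)\sim n^{\alpha-1}$ for arbitrary $\alpha>0$ (not necessarily an integer), which is immediate from the standard asymptotic expansion $\Gamma(z+a)/\Gamma(z+b)\sim z^{a-b}$ as $z\to\infty$.
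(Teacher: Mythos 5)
Your proof is correct. The paper offers no proof of its own here—it simply cites \cite[Theorem VI.1]{Flajolet2009}—and your argument (rescaling by $\beta$, the exact formula $[x^n](1-x)^{-\alpha}=\binom{n+\alpha-1}{n}=\Gamma(n+\alpha)/(\Gamma(\alpha)\Gamma(n+1))$, then the ratio asymptotic $\Gamma(n+\alpha)/\Gamma(n+1)\sim n^{\alpha-1}$) is the standard elementary derivation of exactly the special case needed, avoiding the Hankel-contour singularity analysis used for the general complex-$\alpha$ version of the cited theorem. The only thing worth stating explicitly is that the generalized binomial expansion is valid for arbitrary real $\alpha>0$ (it is, since $(1-x)^{-\alpha}=\exp(-\alpha\log(1-x))$ is analytic on $|x|<1$ with precisely those Taylor coefficients); with that noted, the argument is complete.
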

In the remaining part of this section we introduce the Boltzmann model as helpful tool to reduce our problems to the investigation of iid random variables in Section \ref{sec:setup_and_notation}. Subsequently, we present the proofs of our three main theorems in Sections \ref{subsec:proof_1}-\ref{subsec:proof_4}. At last we prove Proposition \ref{prop:BS_limit} in Section \ref{subsec:proof_BS}.

\subsection{Setup and Notation}
\label{sec:setup_and_notation}
In this section we will introduce the \emph{Boltzmann model} from the pioneering paper \cite{Duchon2004}, which has found various applications in the study of the typical shape of combinatorial structures, see for example \cite{Drmota2019,Addario2019,Stufler2018,Panagiotou2016,Drmota2014,Curien2014,Bernasconi2010,Panagiotou2010}. With the help of this model we translate the initial problem of extracting coefficients of the multiset ogf of unlabelled classes into a probabilistic question. This gives us the proper idea for the general approach for arbitrary functions of the form \eqref{eq:ogf_G_wo_weights}, i.e. when the coefficients are not necessarily integers.
Further, the formalisation via this model will allow us to prove the extreme condensation phenomenon. 

Assume that $z \in \mathbb{R}_+$ is chosen such that $C(z)>0$ is finite.  The unlabelled Boltzmann model defines a random variable $\Gamma C(z)$ taking values in the entire space $\C$ through
\[
	\pr{ \Gamma C(z) = C} =\frac{z^{\card{C}}}{C(z)},
	\quad
	C\in\C.
\]
In complete analogy the random variable $\Gamma G(z)$ is defined on $\Aux = \scs{Mset}(\C)$ of multisets containing $\C$-objects, where in this case the parameter $z>0$ is such that $G(z) := G(z,1)<\infty$ in \eqref{eq:ogf_G_wo_weights}. In the rest of this section we fix  $z = \rho$ recalling that $0<\rho<1$ is the radius of convergence of $C$. Then, in virtue of Lemma~\ref{lem:(roc)Gfinite}, $G$  has radius of convergence $\rho$ and $G(\rho)< \infty$, so that both $\Gamma C(\rho), \Gamma G(\rho)$ are well-defined, and we just write $\Gamma C , \Gamma G$.

Let $g_n$ be the number of objects of size $n$ in $\Aux$ and $g_{n,N}$ those of size $n$ comprised of $N$ components.
By using Bayes' Theorem and that the Boltzmann model induces a uniform distribution on objects of the same size, we immediately obtain
\begin{equation}
\label{eq:gnNBoltzmann}
	\frac{g_{n,N}}{g_n}
	= \pr{ \kappa (\Gamma G) = N \mid  \card{\Gamma G} = n} 
	= \pr{ \card{\Gamma G} = n \mid \kappa (\Gamma G) = N } 
	\frac{\pr{\kappa(\Gamma G) = N}}{\pr{\card{\Gamma G} = n}},
	~~
	n,N\in\Nat.
\end{equation}
To get a handle on this expression we exploit a powerful description of the distribution of $\Gamma G(z)$ in terms of $\Gamma C(\cdot)$, derived in \cite{Flajolet2007}.
In the next steps, the notation $\bigsqcup_{j\in J} A_j$ is used to denote a multiset of elements $A_j$ from a set $\mathcal{A}$, $j\in J$ being indices in some countable set $J$.
That is, multiple occurrences of identical elements are allowed and $\bigsqcup_{j\in J} A_j$ is completely determined by the different elements it contains and their multiplicities.
\begin{enumerate}[label={(\arabic*)}]
\item Let $(P_j)_{j\ge 1}$ be independent random variables, where $P_j \sim \pois{C(\rho^j)/j}$.
\item Let $(\gamma_{j,i})_{j,i\ge 1}$ be independent random variables with $\gamma_{j,i} \sim \Gamma C(\rho^j)$ for $j,i\ge 1$. 
\item For $j,i\ge 1$ and $1\le k \le j$ set $\gamma_{j,i}^{(k)} = \gamma_{j,i}$, that is, make $j$ copies of $\gamma_{j,i}$. Let $\Lambda G := \bigsqcup_{j \ge 1} \bigsqcup_{1 \le i \le P_j} \bigsqcup_{1 \le k \le j } \gamma_{j,i}^{(k)}$.
\end{enumerate}
Intuitively, we interpret $P_j$ as the number of $j$-cycles in some not further specified permutation and to each cycle of length $j$ we attach $j$ times an identical copy of a $\Gamma C(\rho^j)$-distributed $\C$-object. Afterwards we discard the permutation and the cycles and keep the multiset of the generated $\C$-objects. This construction is also made explicit in \cite[Prop.~37]{Bodirsky2011}. 
\begin{lemma}{\cite[Prop.~2.1]{Flajolet2007}}
	\label{lem:algo_same_distri_boltzmann}
The distributions of $\Gamma G$ and $\Lambda G$ are identical.
\end{lemma}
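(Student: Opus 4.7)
The plan is to compute $\pr{\Lambda G = A}$ for an arbitrary multiset $A = \{(C_1,d_1),\dots,(C_k,d_k)\}\in\Aux$ and check that it matches $\pr{\Gamma G = A} = \rho^{\lvert A\rvert}/G(\rho)$.

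First I would reorganise the construction of $\Lambda G$ via Poisson splitting. For fixed $j\ge 1$, $P_j\sim\pois{C(\rho^j)/j}$ and, conditional on $P_j$, each cycle receives an independent decoration $\gamma_{j,i}\sim\Gamma C(\rho^j)$ that hits any fixed $C\in\C$ with probability $\rho^{j\lvert C\rvert}/C(\rho^j)$. By the colouring property of Poisson variables, together with independence across $j$, the counts
\[
	N_{j,C} := \#\{\,1\le i\le P_j : \gamma_{j,i}=C\,\},\qquad C\in\C,~ j\ge 1,
\]
are mutually independent with $N_{j,C}\sim\pois{\rho^{j\lvert C\rvert}/j}$. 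Since one length-$j$ cycle decorated by $C$ contributes exactly $j$ copies of $C$ to $\Lambda G$, the multiplicity of $C$ in $\Lambda G$ equals $d(C):=\sum_{j\ge 1} j\,N_{j,C}$; because the $N_{j,C}$ are independent over $(j,C)$, the $(d(C))_{C\in\C}$ are independent as well. Consequently,
\[
	\pr{\Lambda G = A} = \prod_{C\in\C}\pr{d(C)=d_A(C)},
\]
where $d_A(C)$ is the multiplicity of $C$ in $A$ (with $d_A(C)=0$ if $C$ does not occur).

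Next I would determine the distribution of $d(C)$ via its probability generating function:
\[
	\ex{s^{d(C)}} = \prod_{j\ge 1}\e{\frac{\rho^{j\lvert C\rvert}}{j}(s^j-1)}.
\]
Setting $u:=\rho^{\lvert C\rvert}$ and using $\sum_{j\ge 1}(u^j/j)s^j=-\log(1-us)$ (valid because $\rho<1$), the product collapses to $(1-u)/(1-us)=\sum_{d\ge 0}(1-u)u^d s^d$. Thus $d(C)$ is geometric with $\pr{d(C)=d}=(1-\rho^{\lvert C\rvert})\rho^{d\lvert C\rvert}$, and substitution gives
\[
	\pr{\Lambda G = A} = \Bigl(\prod_{C\in\C}(1-\rho^{\lvert C\rvert})\Bigr)\rho^{\lvert A\rvert}.
\]

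The last step is to identify the infinite product with $1/G(\rho)$. Expanding the logarithm and interchanging sums (absolutely convergent thanks to $C(\rho)<\infty$ and $\rho<1$ via Lemma~\ref{lem:(roc)Gfinite}) yields
\[
	-\sum_{C\in\C}\log(1-\rho^{\lvert C\rvert})
	= \sum_{C\in\C}\sum_{j\ge 1}\frac{\rho^{j\lvert C\rvert}}{j}
	= \sum_{j\ge 1}\frac{C(\rho^j)}{j}
	= \log G(\rho),
\]
so $\prod_{C\in\C}(1-\rho^{\lvert C\rvert})=1/G(\rho)$ and therefore $\pr{\Lambda G = A}=\rho^{\lvert A\rvert}/G(\rho)=\pr{\Gamma G=A}$. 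The main point of care is in the first step: checking that Poisson thinning yields \emph{global} mutual independence of the $(N_{j,C})$ across every pair $(j,C)$ (not merely within fixed $j$), and that all interchanges of products, sums and logarithms are legitimate — both reduce to $C(\rho)<\infty$ and $\rho\in(0,1)$.
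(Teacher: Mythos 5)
Your argument is correct. Note that the paper does not prove this lemma at all: it is imported verbatim as \cite[Prop.~2.1]{Flajolet2007}, so there is no internal proof to compare against. What you have written is a legitimate self-contained verification, and it is instructive: by Poisson colouring you show that the multiplicity of each fixed $C\in\C$ in $\Lambda G$ is an independent geometric random variable with parameter $\rho^{\lvert C\rvert}$, which is precisely the probabilistic counterpart of the identity between the two representations \eqref{eq:ogf_G_wo_weights} and \eqref{eq:alternativeG} of $G(x,y)$ (the ``exp-log'' form versus the product $\prod_{k\ge1}(1-yx^k)^{-c_k}$); in other words, you re-derive the equivalence of the cycle-pointing/Poisson picture and the negative-binomial/geometric picture that the introduction alludes to. All the analytic steps check out: the thinning gives mutual independence of the $N_{j,C}$ over all pairs $(j,C)$ because the triangular arrays for different $j$ are independent by construction, the PGF computation collapses correctly to $(1-u)/(1-us)$ with $u=\rho^{\lvert C\rvert}<1$, and the interchange $\sum_{C}\sum_{j}=\sum_j\sum_C$ is a sum of nonnegative terms bounded by $\log G(\rho)<\infty$ (Lemma~\ref{lem:(roc)Gfinite}). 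The only point you leave implicit is that $\Lambda G$ is almost surely a \emph{finite} multiset, i.e.\ an actual element of $\Aux$; this is needed for the factorisation $\pr{\Lambda G=A}=\prod_C\pr{d(C)=d_A(C)}$ to exhaust the probability space, and it follows either from $\sum_{j\ge1}\pr{P_j>0}\le\sum_{j\ge1}C(\rho^j)/j<\infty$ together with Borel--Cantelli, or a posteriori from the fact that your computed masses $\rho^{\lvert A\rvert}/G(\rho)$ already sum to $1$ over $A\in\Aux$. With that one sentence added, the proof is complete.
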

This statement  paves the way to study $\Gamma G$. In particular, if we write $C_{j,i} = |\gamma_{j,i}|$, note that the definition of $\Lambda G$ guarantees that in distribution
\[
	\kappa(\Gamma G) = \sum_{j\ge 1} j P_j
	\quad
	\text{and}
	\quad
	|\Gamma G| = \sum_{j\ge 1} j \sum_{1 \le i \le P_j} C_{j,i}.
\]
So, let us for $n,N\in \Nat$ define the events
\begin{align}
	\label{eq:def_E_n_P_N}
	\Pa_N := \left\{ \sum_{j\ge 1} j P_j = N \right\}
	\quad
	\text{and}
	\quad
	\E_n := \left\{ \sum_{j\ge 1} j \sum_{1 \le i \le P_j} C_{j,i} = n \right\}.
\end{align}
With $\pr{\E_n} = \pr{\card{\Lambda G} = n} = g_n \rho^n/G(\rho)$ at hand, Lemma \ref{lem:algo_same_distri_boltzmann} and~\eqref{eq:gnNBoltzmann} then guarantee that
\begin{align}
	\label{eq:coeff_in_prob_wo_weights_boltzmann1}
	g_{n,N}
	= G(\rho)\rho^{-n}\pr{\E_n\mid\Pa_N} \pr{\Pa_N}.
\end{align} 
Note that for all $1\le i\le P_j$ and $j\in\Nat$, we have
\begin{align}
	\label{eq:prob_C_j_i=k}
	\pr{C_{j,i} = k} = \frac{c_k \rho^{jk}}{C(\rho^j)},
	\quad
	k\in\Nat.
\end{align}
Equation \eqref{eq:coeff_in_prob_wo_weights_boltzmann1} enables us to reduce the problem of determining $g_{n,N} = [x^n y^N]G(x,y)$ to the problem of determining the probability of the events $\Pa_N$ and $\E_n$ conditioned on $\Pa_N$.
\subsubsection{Remarks to Theorem \ref{thm:1_coeff_G(x,y)}}
\label{sec:remark_real-valued}
In Theorem \ref{thm:1_coeff_G(x,y)} we consider $(c_k)_{k\in\Nat}$ to be a real-valued non-negative sequence and assume $0<\rho<1$. In complete analogy to the discussion prior to this subsection let $P_j\sim \pois{C(\rho^j)/j}$ for $j\in \Nat$, $(C_{j,1},\dots,C_{j,P_j})_{j \in\Nat}$ be as in \eqref{eq:prob_C_j_i=k}, and assume that all these variables are independent.
As a matter of fact, also in this (more general) case we obtain exactly the same representation of $[x^n y^N]G(x,y)$ in terms of $\E_n$ and $\Pa_N$ defined in~\eqref{eq:def_E_n_P_N} without using the combinatorial Boltzmann model.
\begin{lemma}
	\label{lem:coeff_in_prob_wo_weights}
Let $C(x)$ be a power series with non-negative real-valued coefficients and radius of convergence $0<\rho<1$ at which $C(\rho) < \infty$. Then
\[
	[x^n y^N] G(x,y) 
	= G(\rho) \rho^{-n} \pr{\E_n\mid \Pa_N} \pr{\Pa_N},
	\quad
	n,N\in\Nat.
\]
\end{lemma}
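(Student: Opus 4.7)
\medskip

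My plan is to compute directly the bivariate probability generating function
\[
	F(x,y) := \ex{ x^{\sum_{j\ge 1} j\sum_{1 \le i \le P_j} C_{j,i}} \, y^{\sum_{j\ge 1} j P_j}}
\]
and show that it equals $G(\rho x, y)/G(\rho)$. Extracting the $[x^n y^N]$-coefficient from both sides then immediately yields $\pr{\E_n \cap \Pa_N} = \rho^n [x^n y^N] G(x,y)/G(\rho)$, which is the claim after multiplying through by $G(\rho)\rho^{-n}$ and rewriting the joint probability as $\pr{\E_n \mid \Pa_N}\pr{\Pa_N}$. Lemma~\ref{lem:(roc)Gfinite} is what guarantees $G(\rho)<\infty$ so that everything is well defined; this is why the assumption $0<\rho<1$ with $C(\rho)<\infty$ enters.

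For the factorisation I would first use that the families $\{P_j\}_{j\ge 1}$ and $\{C_{j,i}\}_{i\ge 1}$ are independent across different values of $j$, which lets me write $F(x,y) = \prod_{j\ge 1} F_j(x,y)$ with
\[
	F_j(x,y) := \ex{x^{j \sum_{1 \le i \le P_j} C_{j,i}} y^{j P_j}}.
\]
Conditioning on $P_j = p$ and using that, by~\eqref{eq:prob_C_j_i=k}, each $C_{j,i}$ has probability generating function $\sum_{k\ge 1}(c_k\rho^{jk}/C(\rho^j))\,x^{jk} = C((\rho x)^j)/C(\rho^j)$, one gets
\[
	F_j(x,y) = \sum_{p\ge 0} \eul^{-C(\rho^j)/j} \frac{(C(\rho^j)/j)^p}{p!} \, y^{jp} \left(\frac{C((\rho x)^j)}{C(\rho^j)}\right)^p = \exp\!\left(\frac{y^j C((\rho x)^j) - C(\rho^j)}{j}\right).
\]
Taking the product over $j\ge 1$ and recognising the definition of $G$ produces
\[
	F(x,y) = \exp\!\left(\sum_{j\ge 1} \frac{y^j C((\rho x)^j)}{j}\right)\Big/\exp\!\left(\sum_{j\ge 1} \frac{C(\rho^j)}{j}\right) = \frac{G(\rho x, y)}{G(\rho)}.
\]

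Finally, since $F(x,y) = \sum_{n,N\ge 0} \pr{\E_n \cap \Pa_N}\, x^n y^N$ is the generating function of the joint law of $(\sum_j j\sum_i C_{j,i}, \sum_j jP_j)$, equating coefficients gives $\pr{\E_n \cap \Pa_N} = [x^n y^N] G(\rho x, y)/G(\rho) = \rho^n \, g_{n,N}/G(\rho)$, which is exactly the lemma. The only point requiring care is to justify interchanging the infinite product over $j$ with coefficient extraction; this follows from absolute convergence of the series defining $\log F(x,y)$ in a neighbourhood of the origin, which is inherited from $G(\rho)<\infty$ together with $0<\rho<1$ (as in Lemma~\ref{lem:(roc)Gfinite}). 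No subexponentiality of $C$ is needed here, in line with the remark made before the statement.
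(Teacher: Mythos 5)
Your proof is correct and takes essentially the same route as the paper: both identify the bivariate generating function of the pair $\bigl(\sum_j j\sum_i C_{j,i}, \sum_j jP_j\bigr)$ with $G(\rho x,y)/G(\rho)$ via the Poisson/compound-Poisson structure and the identity $\ex{x^{jC_{j,1}}}=C((\rho x)^j)/C(\rho^j)$, then extract coefficients and use $[x^n]F(\rho x)=\rho^n[x^n]F(x)$. Your organisation as a product $\prod_j F_j(x,y)$ of per-$j$ compound-Poisson generating functions is a slightly more streamlined packaging of the paper's argument, which instead expands $\pr{\Pa_k,\E_\ell}$ coefficient-wise and re-sums through auxiliary Poisson variables $H_j\sim\pois{C((\rho x)^j)/j}$, but the computational content is identical.
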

\begin{proof}
We begin with the simple observation 
\begin{equation}
\label{pPNen}
\begin{split}
	\pr{\Pa_N, \E_n} 
	&= [x^n y^N] \sum_{k \ge 0} \sum_{\ell \ge 0} \pr{\Pa_k, \E_{\ell}} x^{\ell} y^k  \\
	&= [x^n y^N] \sum_{k \ge 0} y^k \sum_{\sum_{j \ge 1}jp_j=k} 
	\prod_{j \ge 1}\pr{P_j=p_j}
	\sum_{\ell \ge 0} \pr{\sum_{j \ge 1} j \sum_{1 \le i \le p_j} C_{j,i} = \ell} x^{\ell}.
\end{split}
\end{equation}
We will study this expresion by first simplifying the sum over $\ell$, then the sum over all $p_j$'s, and eventually the sum over $k$. We begin with the sum over $\ell$. For a $\Nat_0$-valued random variable $A$ let $A(x) := \sum_{\ell\ge 0}\pr{A_j = \ell}x^\ell$ denote its probability generating series. Then, if $(A_j)_{j \in \Nat}$ is a sequence of independent $\Nat_0$-valued random variables,
\begin{align}
	\label{eq:pgf_convolution}
	(A_1+\dots+A_m)(x)
	= \prod_{1\le j\le m} A_j(x),
	\quad
	m\in\Nat.
\end{align}
Let us write $C_j(x)$ for the probability generating series of $jC_{j,i}$; note that the actual value of $i$ is not important, since the $(C_{j,i})_{i \in \Nat}$ are iid. Then, whenever $\sum_{j\ge 1} p_j$ is finite,~\eqref{eq:pgf_convolution} implies
\[
	\sum_{\ell \ge 0} \pr{\sum_{j \ge 1} j \sum_{1 \le i \le p_j} C_{j,i} = \ell} x^{\ell}
	= \prod_{j\ge 1} C_j(x)^{p_j}.
\]
Noting that $jC_{j,1}$ takes only values in the lattice $j\Nat_0$, we obtain 
\[
	C_j(x)
	= \sum_{\ell \ge 0} \pr{jC_{j,1}=\ell}x^\ell
	= \sum_{\ell \ge 0} \pr{ C_{j,1} = \ell} x^{j \ell}
	= \frac{1}{C(\rho^j)} \sum_{\ell \ge 0} c_\ell \rho^\ell x^{j \ell}
	= \frac{C((\rho x)^j)}{C(\rho^j)}.
\]
We deduce
\[
	\sum_{\ell \ge 0} \pr{\sum_{j \ge 1} j \sum_{1 \le i \le p_j} C_{j,i} = \ell} x^{\ell}
	= \prod_{j\ge 1} \left(\frac{C((\rho x)^j)}{C(\rho^j)}\right)^{p_j}.
\]
This puts the sum over $\ell$ in~\eqref{pPNen} in compact form. To simplify the sum over the $p_j$'s in~\eqref{pPNen} define independent random variables $(H_j)_{j\ge 1}$ with $H_j \sim \pois{C((\rho x)^j)/j}$. Then
\[
	\sum_{\sum_{j \ge 1}jp_j=k} 
	\prod_{j \ge 1}\pr{P_j=p_j}
	\left(\frac{C((\rho x)^j)}{C(\rho^j)}\right)^{p_j}
	=\frac{G(\rho x,1)}{G(\rho,1)}
	\pr{\sum_{j \ge 1} j H_j = k}.
\]
By similar reasoning as before the probability generating function of $jH_j$ is given by
\[
	\sum_{\ell \ge 0} \pr{H_j = \ell} y^{j \ell}
	= \e{- C((\rho x)^j)/j} \sum_{\ell \ge 0} \frac{(C((\rho x)^j)y^j/j)^\ell}{\ell!}
	= \frac{\e{C((\rho x)^j)y^j/j}}{\e{C((\rho x)^j)/j} }.
\]
Applying \eqref{eq:pgf_convolution}, where we set $A_j := jH_j$, in combination with this identity and plugging everything into~\eqref{pPNen} yields
\[
	\sum_{k \ge 0} \pr{\sum_{j \ge 1} jH_j = k} y^k
	= \frac{G(\rho x,y)}{G(\rho)}.
\]
All in all, we have shown that $\pr{\Pa_N, \E_n} = G(\rho)^{-1}  [x^n y^N] G(\rho x ,y)$. With $[x^n]F(ax) = a^n [x^n]F(x)$ for any power series $F$ and $a\in\Real$ we finish the proof.
\end{proof}

\subsection{Proof of Theorem \ref{thm:1_coeff_G(x,y)}}
	\label{subsec:proof_1}

 Let $\Pa_N$, $\E_n$ be as in the previous section, see~\eqref{eq:def_E_n_P_N},
 where $P_j \sim\pois{C(\rho^j)/j}$ and $C_{j,1},\dots,C_{j,P_j}$ for ${j\in\Nat}$ have the distribution specified in~\eqref{eq:prob_C_j_i=k}. Moreover, we assume that all these random variables are independent. Equipped with Lemma~\ref{lem:coeff_in_prob_wo_weights} from the previous section, the proof of Theorem~\ref{thm:1_coeff_G(x,y)} boils down to estimating $\pr{\E_n\mid \Pa_N}$ and $\pr{\Pa_N}$. Before we actually do so, let us introduce some more auxiliary quantities. Set
\[
	P := \sum_{j \ge 1} j P_j
	\quad
	\text{and}
	\quad
	P^{(\ell)} := \sum_{j > \ell} j P_j,
	\quad
	\ell\in\Nat_0.
\]
With this notation, $\Pa_N$ is the same as $\{P = N\}$ and $\{P^{(0)} = N\}$. Moreover, recall~\eqref{eq:mc} and set
\begin{align}
	\label{eq:def_L_R}
	L := \sum_{1 \le i \le P_1} (C_{1,i} - m)
	\quad
	\text{and}
	\quad
	R := \sum_{j \ge 2} j \sum_{1 \le i \le P_j} (C_{j,i} - m).
\end{align}
With this notation
\begin{equation}
\label{eq:EZP}
	\pr{\E_n \mid \Pa_N} = \pr{  L + R = n-mN \mid \Pa_N}.
\end{equation}
The  driving idea behind these definitions is that the random variables $C_{j,i}-m$, for $j \ge 2$, have exponential tails, and these tails get thinner as we increase $j$; in particular, the probability that $C_{j,i}-m = 0$ approaches one exponentially fast as we increase $j$. However, things are not so easy, since we always condition on $\Pa_N$, and in this space some of the $P_j$'s might be large. This brings us to our general proof strategy. First of all, we will study our probability space conditioned on $\Pa_N$; in particular, in Corollary~\ref{coro:ex_P_1_cond_P_N_wo_weights} and Lemma~\ref{lem:P_j_cond_P_N_upper_bound_wo_weights} below we describe the joint distribution of $P_1, \dots, P_N$ given $\Pa_N$. More specifically, these results show that the $P_j$'s are (more or less) distributed like Poisson random variables with bounded expectations. This will allow us then in Lemma~\ref{lem:P_1=p_R_ge_r_cond_P_N_wo_weights} to show that $L$ dominates the sum $L+R$ in the sense that $\pr{ L + R = n-mN \mid \Pa_N} \sim \pr{ L = n-mN \mid \Pa_N}$ as $n,N,n-N\to\infty$. Subsequently, in Lemma~\ref{lem:probability_E_n_P_N_asymptotic_wo_weights} we expoit the subexponentiality and establish that this last probability is essentially a multiple of $\pr{C_{1,1} = n-mN}$. Just as a side remark and so as to make the notation more accessible: it is instructive to think of the random variable $L$ as something (that will turn out to be) large, and $R$ as some remainder (that will turn out to be small with exponential tails).

Our first aim is to study the distribution -- in particular the tails -- of $P$ and $P^{(\ell)}$, that is, we want to estimate the probability of $\Pa_N$. To this end, consider the probability generating series $F(x)$ and $F^{(\ell)}(x)$ of $P$ and $P^{(\ell)}$, respectively, that is
\[
	F^{(\ell)}(x)
	= \frac1{G^{(\ell)}(\rho)} \cdot \e{\sum_{j>\ell} \frac{C(\rho^j)}{j} x^j},
~~\text{where}~~
G^{(\ell)}(\rho) := \e{\sum_{j>\ell} \frac{C(\rho^j)}{j}} 
\]
and $F(x) = F^{(0)}(x)$, $G^{(0)}(\rho) = G(\rho)$. Hence, the distribution of $P^{(\ell)}$ (and $P$) is given by $(\mathrm{Pr}[P^{(\ell)} = N])_{N\ge0} = ([x^N]F^{(\ell)}(x))_{N\ge0}$. In Lemma~\ref{thm:coeff_F_tilde_F_wo_weights} we determine the precise asymptotic behaviour of these probabilities. But first, we need a simple auxiliary statement.
\begin{proposition}
\label{lem:C(x)_approx_wo_weights}
There exists $A>0$ such that, for all $0 < z \le \rho$ and $j \in \Nat$
\[
	1 \le \frac{C(z^j)}{c_m z^{jm}} \le 1 + A z^j.
\]
\end{proposition}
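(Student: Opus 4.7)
The plan is to split $C(z^j)$ according to the minimum index $m$ and factor out a power of $z^j$ from the tail. Writing
\[
	\frac{C(z^j)}{c_m z^{jm}} = 1 + \frac{1}{c_m}\sum_{k \ge m+1} c_k z^{j(k-m)} = 1 + \frac{z^j}{c_m}\sum_{\ell \ge 0} c_{m+1+\ell}\, z^{j\ell},
\]
the lower bound is immediate since all $c_k \ge 0$, so the only real task is to bound the series $\sum_{\ell \ge 0} c_{m+1+\ell}\, z^{j\ell}$ uniformly in $0 < z \le \rho$ and $j \in \Nat$.

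For the uniform bound I would use that $0 < \rho < 1$. Since $z \le \rho < 1$ and $j \ge 1$, each factor satisfies $z^{j\ell} \le z^{\ell} \le \rho^{\ell}$ for $\ell \ge 0$, so
\[
	\sum_{\ell \ge 0} c_{m+1+\ell}\, z^{j\ell} \le \sum_{\ell \ge 0} c_{m+1+\ell}\, \rho^{\ell} = \rho^{-(m+1)} \sum_{\ell \ge 0} c_{m+1+\ell}\, \rho^{m+1+\ell} \le \rho^{-(m+1)} C(\rho).
\]
Since $C(\rho) < \infty$ (a consequence of subexponentiality or, more weakly, of the finiteness hypothesis standing throughout this section), the right-hand side is a finite constant depending only on $C$ and $\rho$. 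Setting $A := C(\rho)/(c_m\, \rho^{m+1})$ then yields the desired upper bound.

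There is essentially no obstacle here: the statement is a straightforward manipulation exploiting that $z \le \rho < 1$ forces $z^{j\ell}$ to be monotone in $j$ for $\ell \ge 0$, which lets one reduce the case of arbitrary $j \in \Nat$ to the single-variable bound at $z=\rho$. Note also that the argument does not require subexponentiality of $C$; only $0 < \rho < 1$ and $C(\rho) < \infty$ enter.
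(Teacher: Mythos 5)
Your proof is correct and follows essentially the same route as the paper: split off the $k=m$ term, factor out $z^j$, and use $0<z\le\rho<1$ together with $j\ge 1$ to bound the remaining tail by a constant multiple of $C(\rho)<\infty$. Your index shift $z^{j\ell}\le\rho^{\ell}$ is a slightly cleaner way to organize the exponent bookkeeping than the paper's computation (which yields the constant $C(\rho)\rho^{-2m}/c_m$ instead of your $C(\rho)/(c_m\rho^{m+1})$), but the argument is the same.
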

\begin{proof}
The first inequality follows directly from the definition of $C$ and $m$. Note that
\begin{equation}
\label{eq:Ccmzj}
	\frac{C(z^j)}{c_m z^{jm}} 
	\le 1 + z^{j}\frac{1}{c_m} \sum_{k > m} c_k \rho^{jk - j(m+1)}
	= 1 + z^j\frac{\rho^{-2m}}{c_m}  \sum_{k > m} c_k \rho^{jk - j(m+1)+2m}.
\end{equation}
Since $m \ge 1$ and $k \ge m+1$ we obtain that
\[
	jk - j(m+1) +2m
	= j(k - m-1) +2m 
	\ge (k - m-1) + 2m
	= k + m -1 
	\ge k.
\]
Thus, as $\rho < 1$, we obtain from~\eqref{eq:Ccmzj} the claimed bound with $A = C(\rho)\rho^{-2m}/c_m$. 
\end{proof}

\begin{lemma}
\label{thm:coeff_F_tilde_F_wo_weights}
There exist constants $(B^{(\ell)})_{\ell \in \Nat_0}>0$ such that, as $N\to\infty$
\[
	[x^N] F(x) \sim B^{(0)} \cdot N^{c_m-1}\rho^{mN} 
	\quad
	\text{and}
	\quad
	[x^N]F^{(\ell)}(x) \sim B^{(\ell)} \cdot N^{c_m-1}\rho^{mN} ,
	~~
	\ell\in\Nat,
\]
where
\[
	\frac{B^{(\ell)}}{B^{(0)}} = \e{\sum_{1 \le j \le \ell} \frac{C(\rho^j)}{j}} 
		\e{-\sum_{1 \le j \le \ell} \frac{C(\rho^j)}{j}\rho^{-jm}}.
\]
\end{lemma}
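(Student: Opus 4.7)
The plan is to apply singularity analysis to $F^{(\ell)}(x)$ at its dominant singularity $x = \rho^{-m}$. Using the Taylor expansion $-\log(1-u) = \sum_{j \ge 1} u^j/j$, I would split
\begin{equation*}
\sum_{j > \ell} \frac{C(\rho^j)}{j} x^j = -c_m \log(1-\rho^m x) - \sum_{1 \le j \le \ell} \frac{c_m (\rho^m x)^j}{j} + \sum_{j > \ell} \frac{C(\rho^j) - c_m \rho^{jm}}{j} x^j,
\end{equation*}
which exhibits the factorisation $F^{(\ell)}(x) = (1-\rho^m x)^{-c_m} \cdot H^{(\ell)}(x)$, where $H^{(\ell)}(x)$ is $1/G^{(\ell)}(\rho)$ times the exponential of the last two sums above.

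Next I would verify that $H^{(\ell)}$ is analytic on a disk of radius strictly larger than $\rho^{-m}$. Proposition~\ref{lem:C(x)_approx_wo_weights} gives $0 \le C(\rho^j) - c_m \rho^{jm} \le A c_m \rho^{j(m+1)}$ for all $j \ge 1$, so the tail sum in the exponent of $H^{(\ell)}$ converges absolutely for $|x| < \rho^{-m-1}$; this radius strictly exceeds $\rho^{-m}$ because $\rho < 1$, and the other sum is a polynomial, so $H^{(\ell)}(\rho^{-m}) > 0$ is well-defined. Since $[x^{n-1}] / [x^n]$ of $(1-\rho^m x)^{-c_m}$ equals $(n/(n+c_m-1))\rho^{-m} \to \rho^{-m}$, this auxiliary series satisfies~\eqref{eq:subexp2} with radius of convergence $\rho^{-m}$. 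Thus Lemma~\ref{lem:coeff_product}, applied with $A(x) := (1-\rho^m x)^{-c_m}$ and $B(x) := H^{(\ell)}(x)$, combined with Lemma~\ref{lem:asymptpowergeom}, yields
\begin{equation*}
[x^N] F^{(\ell)}(x) \sim H^{(\ell)}(\rho^{-m}) \cdot [x^N](1-\rho^m x)^{-c_m} \sim \frac{H^{(\ell)}(\rho^{-m})}{\Gamma(c_m)} \, N^{c_m - 1} \rho^{mN}.
\end{equation*}
Hence I would set $B^{(\ell)} := H^{(\ell)}(\rho^{-m})/\Gamma(c_m)$.

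To conclude I would verify the ratio formula. Substituting $x = \rho^{-m}$ in the exponent of $H^{(\ell)}$ produces $-c_m H_\ell + \sum_{j > \ell}(C(\rho^j) - c_m \rho^{jm})/(j\rho^{jm})$, where $H_\ell := \sum_{1 \le j \le \ell} 1/j$. When forming the quotient $H^{(\ell)}(\rho^{-m})/H^{(0)}(\rho^{-m})$, the two tail-sums differ by precisely $-\sum_{1 \le j \le \ell}(C(\rho^j) - c_m \rho^{jm})/(j\rho^{jm})$; combined with the $-c_m H_\ell$ term, the $c_m/j$ contributions cancel, leaving the single sum $-\sum_{1 \le j \le \ell} C(\rho^j)/(j\rho^{jm})$. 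Multiplying by the factor $G^{(0)}(\rho)/G^{(\ell)}(\rho) = \e{\sum_{1 \le j \le \ell} C(\rho^j)/j}$ yields the claimed expression for $B^{(\ell)}/B^{(0)}$. I expect the only real subtlety to be the radius-of-convergence estimate in step two; the final algebra is routine bookkeeping.
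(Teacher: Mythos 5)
Your proof is correct and follows essentially the same route as the paper: factor out $(1-\rho^m x)^{-c_m}$, use Proposition~\ref{lem:C(x)_approx_wo_weights} to push the radius of convergence of the remaining factor beyond $\rho^{-m}$, and conclude via Lemmas~\ref{lem:coeff_product} and~\ref{lem:asymptpowergeom}. Your explicit verification of the ratio formula for $B^{(\ell)}/B^{(0)}$ is a welcome addition that the paper leaves implicit.
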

\begin{proof}
We split up
\[
	F(x) = \frac{1}{G(\rho)}
	\cdot \e{\sum_{j \ge 1} \frac{c_m\rho^{jm}}{j}x^j}
	\cdot \e{\sum_{j \ge 1} \frac{C(\rho^j) - c_m\rho^{jm}}{j}x^j}
	=: \frac{1}{G(\rho)}
	\cdot A(x)
	\cdot B(x). 
\]
Proposition \ref{lem:C(x)_approx_wo_weights} asserts that $B(x)$ has radius of convergence $\rho_B \ge \rho^{-(m+1)}$. Further,
\[
	A(x) 
	= (1-\rho^mx)^{-c_m},
\]
and the radius of convergence of $A(x)$ is $\rho_A = \rho^{-m}<\rho^{-(m+1)} \le \rho_B$ (since $\rho<1$). Using Lemma~\ref{lem:asymptpowergeom} we obtain that $[x^N]A(x) \sim{N^{c_m-1}}\rho^{mN}/ {\Gamma(c_m)}$  and thus $A(x)$ has property \eqref{eq:subexp2}. From Lemma \ref{lem:coeff_product} we then obtain that
\begin{equation}
\label{eq:prPNexplicit}
	\pr{P = N} = 
	[x^N]F(x) 
	\sim \frac{1}{G(\rho)} B(\rho_A) [x^N]A(x)
	\sim \frac{B(\rho^{-m})}{G(\rho) \Gamma(c_m)}  \cdot {N^{c_m-1}}{}\rho^{mN},
	\quad
	N\to\infty,
\end{equation}
Similarly, for $\ell\in\Nat$
\[
	F^{(\ell)}(x) 
	= \frac{ A(x)}{G^{(\ell)}(\rho)} 
	\cdot \e{\sum_{j \ge 1} \frac{C(\rho^j) - c_m\rho^{jm}}{j}x^j} \e{- \sum_{1 \le j \le \ell} \frac{C(\rho^j)}{j} x^j}
	=:\frac{A(x)}{G^{(\ell)}(\rho)} 
	\cdot B^{(\ell)}(x).
\] 
Since the radius of convergence of $B^{(\ell)}(x)$ is again (at least) $\rho^{-(m+1)}$  
\[
	[x^N]F^{(\ell)}(x)
	\sim \frac{B^{(\ell)}(\rho^{-m})}{G^{(\ell)}(\rho)\Gamma(c_m)} \cdot {N^{c_m-1}} \rho^{mN}.
\]
\end{proof}
As an immediate consequence of Lemma \ref{thm:coeff_F_tilde_F_wo_weights} we establish the asymptotic distribution of the random vector $(P_1,\dots,P_\ell)$ conditioned on the event $\Pa_N$ for fixed $\ell\in\Nat$; this will be useful later when we consider the distribution of $L$, cf.~\eqref{eq:def_L_R}. Clearly, the condition $\Pa_N$ makes $P_1,\dots,P_{\ell}$ dependent, but the corollary says that this effect vanishes for large $N$. Moreover, we study the moments of $P_1$ given $\Pa_N$.
\begin{corollary}
	\label{coro:ex_P_1_cond_P_N_wo_weights}
Let $\ell\in\Nat$ and $(p_1,\dots,p_\ell)\in\Nat_0^{\ell}$. Then
\begin{equation}
\label{eq:jointconv}
	\pr{\bigcap_{1 \le j \le \ell} \left\{P_j = p_j\right\} \mid \Pa_N}
	\to \prod_{1 \le j \le \ell} \pr{ \pois{ \frac{C(\rho^j)}{j\rho^{jm}} } = p_j},
	\quad
	N\to\infty.
\end{equation}
Moreover, for any $z \in \mathbb{R}$, as 	$N\to\infty$
\[
	\ex{z^{P_1} \mid \Pa_N}
	\to \ex{z^{\pois{ \frac{C(\rho)}{\rho^{m}} }}}
	= \eul^{\frac{C(\rho)}{\rho^{m}}(z-1)},
\quad 
	\ex{P_1 \mid \Pa_N}
	\to \ex{\pois{ \frac{C(\rho)}{\rho^{m}} }} = {C(\rho)}{\rho^{-m}}.
\]
\end{corollary}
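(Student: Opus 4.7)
Both assertions reduce to a direct computation once one factors the joint event via independence of the $P_j$'s. Writing $k:=\sum_{1\le j\le\ell}jp_j$, for $N\ge k$
\[
\pr{\bigcap_{1\le j\le\ell}\{P_j=p_j\}\mid \Pa_N}
=\frac{\prod_{1\le j\le\ell}\pr{P_j=p_j}\cdot\pr{P^{(\ell)}=N-k}}{\pr{P=N}}.
\]
The plan is to insert $\pr{P_j=p_j}=\eul^{-C(\rho^j)/j}(C(\rho^j)/j)^{p_j}/p_j!$ in the numerator and to use Lemma~\ref{thm:coeff_F_tilde_F_wo_weights} to see that the ratio $\pr{P^{(\ell)}=N-k}/\pr{P=N}\to (B^{(\ell)}/B^{(0)})\rho^{-mk}$, the polynomial factor $(N-k)^{c_m-1}/N^{c_m-1}$ tending to $1$. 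Combining the explicit form of $B^{(\ell)}/B^{(0)}$ with the Poisson masses and distributing $\rho^{-mk}=\prod_{j\le\ell}\rho^{-mjp_j}$ across the $\ell$ factors, each $\eul^{-C(\rho^j)/j}$ gets upgraded to $\eul^{-C(\rho^j)/(j\rho^{jm})}$ and the limit matches $\prod_{1\le j\le\ell}\pr{\pois{C(\rho^j)/(j\rho^{jm})}=p_j}$ term by term, proving~\eqref{eq:jointconv}.

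To get the probability generating function limit I would specialise to $\ell=1$ and write
\[
\ex{z^{P_1}\mid\Pa_N}=\sum_{p=0}^{N}z^p\,\frac{\pr{P_1=p}\,\pr{P^{(1)}=N-p}}{\pr{P=N}}.
\]
Pointwise each summand converges to $z^p\pr{\pois{C(\rho)\rho^{-m}}=p}$ by the previous paragraph, and the target series $\eul^{C(\rho)\rho^{-m}(z-1)}$ is absolutely convergent for every $z\in\Real$. To move the limit inside the sum I would produce a uniform bound on the ratio $\pr{P^{(1)}=N-p}/\pr{P=N}$: apply~\eqref{eq:asimbuniform} to the numerator asymptotics of Lemma~\ref{thm:coeff_F_tilde_F_wo_weights} upstairs, and use the lower bound $\pr{P=N}\ge \tfrac12 B^{(0)}N^{c_m-1}\rho^{mN}$ (valid for $N\ge N_0$ large enough thanks to the $\sim$) downstairs, to get $\pr{P^{(1)}=N-p}/\pr{P=N}\le K(1-p/N)^{c_m-1}\rho^{-mp}$ for some $K>0$. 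In the combinatorial setting $c_m=|\C_m|$ is a positive integer, so $(1-p/N)^{c_m-1}\le 1$ and the bound collapses to $K\rho^{-mp}$. Since $\sum_p|z|^p\pr{P_1=p}K\rho^{-mp}=K\,\eul^{-C(\rho)+|z|C(\rho)\rho^{-m}}$ is finite for every real $z$, dominated convergence yields the claimed pgf identity. The expectation statement follows by the same dominance (now with dominating series $K\sum_p p\,\pr{P_1=p}\rho^{-mp}<\infty$), giving uniform integrability of $(P_1\mid\Pa_N)$ and hence convergence of the first moment.

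The main obstacle is the uniform bound used in the second paragraph: mere pointwise convergence of the masses $\pr{P_1=p\mid\Pa_N}$ is too weak to deduce convergence of $\ex{z^{P_1}\mid\Pa_N}$ for $z>1$ or of $\ex{P_1\mid\Pa_N}$. Its proof hinges on two ingredients that are not fully automatic: turning the $\sim$-asymptotics of $\pr{P=N}$ and $\pr{P^{(1)}=N-p}$ into genuine two-sided inequalities via~\eqref{eq:asimbuniform}, and exploiting the integrality $c_m\ge 1$ to prevent the factor $(1-p/N)^{c_m-1}$ from blowing up as $p$ approaches $N$.
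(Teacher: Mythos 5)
Your first part is exactly the paper's argument: factor the conditional probability using independence of $P_1,\dots,P_\ell,P^{(\ell)}$ and apply Lemma~\ref{thm:coeff_F_tilde_F_wo_weights} to the ratio $[x^{N-k}]F^{(\ell)}(x)/[x^N]F(x)$; your bookkeeping with $B^{(\ell)}/B^{(0)}$ and $\rho^{-mk}$ is correct. For the moment statements you take a mildly different route. The paper truncates the sum at a large $K$, uses the convergence of the first $K$ terms, and kills the tail $\sum_{K\le k\le N}z^k\pr{P_1=k\mid\Pa_N}$ by the same upper bound $A_1(1-k/N)^{c_m-1}\rho^{-mk}$ you derive, followed by a ratio-test argument ($t_{k+1}\le t_k/2$ for $k\ge K$); you instead package the same bound as a dominating function and invoke dominated convergence. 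Two caveats on your version. First, your dominating bound degenerates at the endpoint $p=N$: there $(1-p/N)^{c_m-1}=0^{c_m-1}$, which is $0$ for $c_m>1$ (a false upper bound, since $\pr{P^{(1)}=0}>0$) and undefined/infinite for $c_m<1$; like the paper, you should treat the single term $p=N$ separately, where $z^N\pr{P_1=N}/\pr{\Pa_N}\to0$ follows directly from Lemma~\ref{thm:coeff_F_tilde_F_wo_weights}. Second, your step ``$c_m$ is a positive integer, so $(1-p/N)^{c_m-1}\le 1$'' restricts the argument to the combinatorial setting, whereas the corollary feeds into Lemma~\ref{lem:probability_E_n_P_N_asymptotic_wo_weights} and hence into Theorem~\ref{thm:1_coeff_G(x,y)}, which is claimed for arbitrary real $c_m>0$; for $0<c_m<1$ the factor $(1-p/N)^{c_m-1}$ grows like $N^{1-c_m}$ near $p=N$ and is not dominated uniformly in $N$, so literal dominated convergence fails there. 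The paper's tail estimate sidesteps this because the ratio $t_{k+1}/t_k$ stays below $1/2$ for all $c_m>0$. In the combinatorial setting your proof is fine once the $p=N$ term is handled; for full generality you should adopt the paper's tail argument (or pair the polynomial factor with the $1/p!$ decay before taking limits).
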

\begin{proof}
Let $s = \sum_{1 \le j \le \ell} jp_j$. Using the definition of conditional probability we obtain readily
\[
	\pr{\bigcap_{1 \le j \le \ell} \left\{P_j = p_j\right\} \mid \Pa_N}
	= \frac{\pr{\bigcap_{1 \le j \le \ell} \left\{P_j = p_j\right\} \cap \{P^{(\ell)} = N - s\}}}{\pr{P = N}}.
\]
Since $P_1, \dots P_\ell, P^{(\ell)}$ are independent, the right-hand size equals
\begin{equation}
\label{eq:auxPell}
	\prod_{1 \le j \le \ell}\pr{P_j = p_j} \cdot [x^{ N - s}]F^{(\ell)}(x) / [x^N]F(x),
\end{equation}
and \eqref{eq:jointconv} follows by applying Lemma~\ref{thm:coeff_F_tilde_F_wo_weights}. We will next show $P_1$ given $\Pa_N$ has exponential moments. Abbreviate $B:=C(\rho)\rho^{-m}$. Note that \eqref{eq:jointconv} (where we use $\ell = 1$) yields for any fixed $K\in\Nat$
\[
	\sum_{0\le k\le K} z^k \pr{P_1 = k \mid \Pa_N}
	\sim \sum_{0\le k \le K} z^k \pr{\pois{B} = k},
	\quad
	N\to\infty.
\]
Let $\eps > 0$. Note that we can choose $K$ large enough such that 
the right hand side differs at most $\eps$ from $\ex{z^{\pois{B}}} = e^{B(z-1)}$. In order finish the proof  we will argue that if $K$ and $N$ are large enough, then $\sum_{K \le k \le N}z^k \pr{P_1=k \mid \Pa_N} < \eps$ as well. First, by Lemma~\ref{thm:coeff_F_tilde_F_wo_weights} 
\[
	z^N\pr{P_1 = N \mid \Pa_N} 
	\le z^N \frac{\pr{P_1 = N}}{\pr{\Pa_N}}
	\sim \frac1A N^{-c_m+1} \frac{(zB)^N}{N!}
	\to 0, 
	\quad
	N\to\infty.
\]
Moreover, according to Lemma~\ref{thm:coeff_F_tilde_F_wo_weights} there exists $A_1>0$ such that $[x^{N-k}]F^{(1)}(x)/[x^N]F(x) \le A_1\cdot (1-k/N)^{c_m-1}\rho^{-mk}$  for all $0 \le k \le N-1$. Then with \eqref{eq:auxPell} we obtain
\begin{align}
\label{eq:bounded_sum_poisson}
	\sum_{K\le k \le N-1} z^k \pr{P_1 = k \mid \Pa_N}
	\le A_1 \sum_{K\le k \le N-1} t_k,
	~~
	\text{where}~~
	t_k := (1-k/N)^{c_m-1} \frac{(zB)^k}{k!}.
\end{align}
Note that we can choose $K$ large enough such that, say, $t_{k+1} \le t_k/2$ for all $K \le k < N-1$. Then the sum is bounded by $2t_K$, and choosing $K$ once more large enough gives $2t_K < \eps$.
\end{proof}
 Note that Corollary~\ref{coro:ex_P_1_cond_P_N_wo_weights} (only) holds for a fixed $\ell\in\Nat$; it does not tell us anything about $(P_1,\dots,P_{\ell})$ in the case where $\ell$ is not fixed, or, more importantly, when $\ell = N$ (note that $P_{N'} = 0$ for all $N' > N$ if we condition on $\Pa_N$). Regarding this general case, the following statement gives an upper bound for the probability of the event $\bigcap_{1 \le j \le N} \{P_j = p_j\}$ that is not too far from the right-hand side in Corollary~\ref{coro:ex_P_1_cond_P_N_wo_weights}.  For the remainder of this section it is convenient to define 
\[
	\Omega_N 
	:= \Bigl\{ (p_1,\dots,p_N) \in \Nat_0^{N} ~:~ \sum_{1 \le j \le N} j p_j = N \Bigr\},
	\quad
	N \ge 2.
\]	
In what follows we derive a stochastic upper bound for the distribution of $(P_1,\dots,P_N)$ conditioned on $\Pa_N$.
\begin{lemma}
	\label{lem:P_j_cond_P_N_upper_bound_wo_weights}
There exists an $A>0$ such that for all $N$ and all $(p_1,\dots,p_N)\in\Omega_N$ 
\[
	\pr{\bigcap_{1 \le j \le N} \{P_j = p_j\} \mid \Pa_N}
	\le A \cdot N \cdot 
	\prod_{1 \le j \le N} \pr{\pois{\frac{C(\rho^j)}{j\rho^{jm}}} = p_j}.
\]
\end{lemma}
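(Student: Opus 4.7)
The plan is to start from the independence of the $P_j$'s and the simple observation that any configuration $(p_1,\dots,p_N)\in\Omega_N$ taken together with $\Pa_N$ forces $P_j=0$ for every $j>N$. This yields the identity
\[
    \pr{\bigcap_{1\le j\le N}\{P_j=p_j\}\mid \Pa_N}
    =\frac{1}{\pr{\Pa_N}}
    \prod_{1\le j\le N}\pr{P_j=p_j}\cdot\prod_{j>N}\pr{P_j=0}.
\]
The next step is to compare each factor $\pr{P_j=p_j}$ with the target Poisson probability $\pr{\pois{\lambda_j}=p_j}$, where $\lambda_j:=C(\rho^j)/(j\rho^{jm})$. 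A direct computation gives
\[
    \frac{\pr{P_j=p_j}}{\pr{\pois{\lambda_j}=p_j}}
    =\rho^{jmp_j}\cdot e^{\lambda_j-C(\rho^j)/j}.
\]
Taking the product over $1\le j\le N$ and using $\sum_j jp_j=N$ turns the $\rho$-powers into a clean factor $\rho^{mN}$.

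The next ingredient is a telescoping of the exponentials: the factor $\prod_{j>N}\pr{P_j=0}=\exp(-\sum_{j>N}C(\rho^j)/j)$ combines with $\exp(-\sum_{j=1}^{N}C(\rho^j)/j)$ from the previous step to yield exactly $1/G(\rho)$, since $\sum_{j\ge 1}C(\rho^j)/j=\log G(\rho)$. After these manipulations the conditional probability is written as
\[
    \pr{\bigcap_{1\le j\le N}\{P_j=p_j\}\mid\Pa_N}
    =\frac{\rho^{mN}}{G(\rho)\,\pr{\Pa_N}}\,
    e^{\sum_{j=1}^{N}\lambda_j}
    \prod_{1\le j\le N}\pr{\pois{\lambda_j}=p_j}.
\]
It therefore suffices to show that the prefactor is $\mathcal{O}(N)$ uniformly in $N$.

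For the prefactor, I would invoke Proposition~\ref{lem:C(x)_approx_wo_weights} to obtain the pointwise estimate $\lambda_j\le c_m/j+c_mA\rho^j/j$. Summing gives $\sum_{j=1}^{N}\lambda_j\le c_m H_N+\mathcal{O}(1)=c_m\log N+\mathcal{O}(1)$, so $e^{\sum_{j=1}^{N}\lambda_j}\le A_1\, N^{c_m}$ for a constant $A_1$. On the other hand, Lemma~\ref{thm:coeff_F_tilde_F_wo_weights} combined with the observation~\eqref{eq:asimbuniform} (applied to the sequences $\pr{\Pa_N}$ and $B^{(0)}N^{c_m-1}\rho^{mN}$, both strictly positive for $N\ge 1$) produces a lower bound $\pr{\Pa_N}\ge A_2\,N^{c_m-1}\rho^{mN}$, valid for all $N\ge 1$. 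Plugging these two bounds into the previous display cancels $\rho^{mN}$ and leaves a factor $\le (A_1/(A_2 G(\rho)))\cdot N$, which is the desired constant times $N$.

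I do not anticipate any substantial obstacle; the whole argument is essentially an explicit identity plus the uniform estimates from Proposition~\ref{lem:C(x)_approx_wo_weights} and Lemma~\ref{thm:coeff_F_tilde_F_wo_weights}. The only point that requires a little care is ensuring that the lower bound on $\pr{\Pa_N}$ is truly uniform down to the smallest admissible values of $N$ (here $\Omega_N\neq\emptyset$ as soon as $N\ge 1$, because $(p_1,\dots,p_N)=(N,0,\dots,0)\in\Omega_N$); this is handled by the general principle~\eqref{eq:asimbuniform} together with the fact that $\pr{\Pa_N}>0$ for every $N\ge 1$ (since $P_1$ takes every non-negative integer value with positive probability).
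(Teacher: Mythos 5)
Your proof is correct and follows essentially the same route as the paper: after factoring the conditional probability via independence, you compare each $\pr{P_j=p_j}$ to the $\pois{C(\rho^j)/(j\rho^{jm})}$ probability so that $\sum_j jp_j=N$ produces the factor $\rho^{mN}$, then bound $\e{\sum_{j\le N}\lambda_j}=\mathcal{O}(N^{c_m})$ via Proposition~\ref{lem:C(x)_approx_wo_weights} and $\pr{\Pa_N}^{-1}=\mathcal{O}(N^{1-c_m}\rho^{-mN})$ via Lemma~\ref{thm:coeff_F_tilde_F_wo_weights}. The only (harmless) difference is that you keep the exact identity including $\prod_{j>N}\pr{P_j=0}$, which telescopes into $1/G(\rho)$, where the paper simply drops these factors to get an upper bound; your explicit handling of the uniformity for small $N$ is a welcome extra.
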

\begin{proof}
Using the definition of conditional probability  and recalling that the $P_j$'s are independent and $P_j \sim \pois{C(\rho^j)/j}$
\begin{align*}
	\textrm{Pr}\Biggl[\bigcap_{1 \le j \le N} & \{P_j = p_j\} \mid \Pa_N\Biggr]
	\le \frac{1}{\pr{ \Pa_N}} 
	\cdot \prod_{1 \le j \le N} \left(\frac{C(\rho^j)}{j}\right)^{p_j} \frac1{ p_j!} \\
	&= \frac{1}{\pr{ \Pa_N}} 
	\cdot \e{\sum_{1 \le j \le N} \frac{C(\rho^j)}{j\rho^{jm}}} \rho^{mN}
	\cdot \prod_{1 \le j \le N} \pr{\pois{ \frac{C(\rho^j)}{j\rho^{jm}} } = p_j}.
\end{align*}
With Lemma \ref{thm:coeff_F_tilde_F_wo_weights} we obtain the existence of $B_1>0$ such that for $N$ large enough
\[
	\pr{ \Pa_N}^{-1} \le B_1 \rho^{-mN} N^{1-c_m}.
\]
By Proposition \ref{lem:C(x)_approx_wo_weights} there exists a constant $B_2>0$ such that	$C(\rho^j)/\rho^{jm} \le c_m	+ B_2 c_m \rho^j$. Consequently, since $\rho \in (0,1)$ there exists $B_3>0$ such that
\[
	\e{\sum_{1 \le j \le N} \frac{C(\rho^j)}{j\rho^{jm}}}
	\le B_3 N^{c_m},
\]
which concludes the proof.
\end{proof}
With this result at hand we are ready to study the distribution of $R$, cf.~\eqref{eq:def_L_R}. As it will be necessary later, we show uniform tails bounds that hold for the joint distribution of $P_1$ and $R$ conditioned on $\Pa_N$.
\begin{lemma}
	\label{lem:P_1=p_R_ge_r_cond_P_N_wo_weights}
There exist  $A>0$ and $0<a<1$ such that
\[
	\pr{P_1 = p, R = r \mid \Pa_N} \le A \cdot a^{p + r},
	\quad
	p,r,N\in\Nat.
\]
\end{lemma}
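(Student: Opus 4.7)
The plan is to bound the conditional bivariate moment generating function $M_N(z,s) := \ex{z^{P_1} s^R \mid \Pa_N}$ uniformly in $N$ for some fixed $z, s > 1$, and to deduce the lemma by a joint Markov inequality. Indeed, once $M_N(z_0, s_0) \le C$ is established for some $z_0, s_0 > 1$ and all $N$, then for every $p, r \in \Nat$
\[
\pr{P_1 = p,\, R = r \mid \Pa_N} \le \pr{P_1 \ge p,\, R \ge r \mid \Pa_N} \le z_0^{-p} s_0^{-r}\, M_N(z_0, s_0) \le C\cdot a^{p+r},
\]
with $a := \max(z_0^{-1}, s_0^{-1}) \in (0,1)$, which is the claim.

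To compute $M_N(z,s)$, set
\[
\psi_j(s) := \ex{s^{j(C_{j,1}-m)}} = \frac{C((s\rho)^j)}{s^{jm}\, C(\rho^j)}, \qquad j \ge 2.
\]
Since $R$ is measurable with respect to $(P_j, C_{j,i})_{j \ge 2}$, the independence of the $P_j$'s together with the standard identity $\sum_{(p_j):\,\sum j p_j = N} \prod_j a_j^{p_j}/p_j! = [x^N]\exp(\sum_{j\ge 1} a_j x^j)$ yields
\[
\ex{z^{P_1} s^R \mathbf{1}[\Pa_N]} = G(\rho)^{-1}[x^N]\exp\!\left(zC(\rho)\, x + \sum_{j \ge 2} \frac{\psi_j(s) C(\rho^j)}{j}\, x^j\right).
\]
Since $\pr{\Pa_N} \sim B^{(0)} N^{c_m-1}\rho^{mN}$ by Lemma~\ref{thm:coeff_F_tilde_F_wo_weights}, it suffices to show the numerator above has the same asymptotic order up to a multiplicative constant. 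Following the factorisation in the proof of Lemma~\ref{thm:coeff_F_tilde_F_wo_weights}, write the generating function as $A(x) H^*(x)$, where $A(x) := (1-\rho^m x)^{-c_m}$ captures the singular behaviour at $\rho^{-m}$ and
\[
H^*(x) := \exp\!\left((zC(\rho) - c_m\rho^m)\, x + \sum_{j \ge 2} \frac{\psi_j(s) C(\rho^j) - c_m \rho^{jm}}{j}\, x^j\right).
\]
If $H^*$ is analytic in a disk strictly containing $\rho^{-m}$, then Lemma~\ref{lem:coeff_product} gives $[x^N]A(x)H^*(x) \sim H^*(\rho^{-m})\, N^{c_m-1}\rho^{mN}/\Gamma(c_m)$, and hence $M_N(z,s)$ converges to a finite, positive limit as $N \to \infty$; invoking \eqref{eq:asimbuniform} yields uniform boundedness in $N$.

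The main obstacle is therefore to check that $H^*$ has radius of convergence strictly larger than $\rho^{-m}$. Rewriting $\psi_j(s) C(\rho^j) - c_m \rho^{jm} = (C((s\rho)^j) - c_m (s\rho)^{jm})/s^{jm}$ and fixing $1 < s < \rho^{-1/2}$, one has $(s\rho)^j \le \rho^{j/2} \le \rho$ for every $j \ge 2$. Then, as in the proof of Proposition~\ref{lem:C(x)_approx_wo_weights},
\[
C((s\rho)^j) - c_m (s\rho)^{jm}
= (s\rho)^{j(m+1)} \sum_{k > m} c_k \bigl((s\rho)^j\bigr)^{k-m-1}
\le D\, (s\rho)^{j(m+1)},
\]
where $D := \rho^{-(m+1)}(C(\rho) - c_m\rho^m)$, using $\bigl((s\rho)^j\bigr)^{k-m-1} \le \rho^{k-m-1}$ for $j \ge 2$. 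Dividing by $s^{jm}$ bounds the $j$-th coefficient appearing in the exponent of $H^*$ by $D(s\rho^{m+1})^j/j$; since $s\rho^{m+1} < \rho^{m+1/2} < \rho^m$, the series has radius of convergence at least $\rho^{-(m+1/2)} > \rho^{-m}$, and $H^*$ is analytic at $\rho^{-m}$ as required.
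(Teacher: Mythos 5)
Your proof is correct. The key computation $\ex{z^{P_1}s^R\mathbf{1}[\Pa_N]} = G(\rho)^{-1}[x^N]\exp(zC(\rho)x + \sum_{j\ge2}\psi_j(s)C(\rho^j)x^j/j)$ checks out (Tonelli plus the exponential formula over configurations with $\sum_j jp_j = N$), the choice $1<s<\rho^{-1/2}$ correctly guarantees $(s\rho)^j\le\rho$ for $j\ge2$ so that all $\psi_j(s)$ are finite, and your estimate showing that $H^*$ is analytic beyond $\rho^{-m}$ is sound; combined with Lemma~\ref{lem:coeff_product} and the asymptotics for $\pr{\Pa_N}$ this gives $M_N(z,s)\to H^*(\rho^{-m})/B(\rho^{-m})\in(0,\infty)$, and the joint Markov bound finishes the argument. (Two points you leave implicit but which are immediate: $H^*(\rho^{-m})>0$ as an exponential of a finite real number, which is needed for Lemma~\ref{lem:coeff_product}; and $M_N(z,s)<\infty$ for each fixed small $N$, since the numerator is a coefficient of a convergent series and $\pr{\Pa_N}>0$.)

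Your route differs from the paper's in how the decay in $p$ is extracted. The paper conditions on the full vector $(P_1,\dots,P_N)$, applies Markov's inequality only to $R$ given the configuration, and then controls the configuration probabilities via the separate stochastic upper bound of Lemma~\ref{lem:P_j_cond_P_N_upper_bound_wo_weights}; the exponential decay in $p$ then comes from the residual Poisson factor $B^p/p!$, at the cost of a bound of the form $N^{1-c_m}(B^p/p!)(N-p)^{c_m-1}$ that has to be shown bounded by a three-way case analysis in $p$ versus $N$. You instead tilt jointly in both variables and compute the conditional bivariate generating function exactly, which makes Lemma~\ref{lem:P_j_cond_P_N_upper_bound_wo_weights} and the final case analysis unnecessary; both proofs share the same essential ingredients (the tilt constraint $s\rho<\rho^{1/2}$, the factorisation through $A(x)=(1-\rho^mx)^{-c_m}$, and Lemma~\ref{lem:coeff_product}). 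Your version is the more streamlined of the two, though it yields a slightly weaker pointwise conclusion in $p$ (genuinely exponential rather than the factorial decay the paper's intermediate bound exhibits), which is all the lemma asks for.
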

\begin{proof}
We will prove the claimed bound by showing appropriate bounds for the moment generating function $\ex{e^{\lambda R}\mid \Pa_N}$.  Let us fix any $0 < \lambda < -\log(\rho) / 2$ such that $\rho e^\lambda < 1$. Then $\rho^j \eul^{\lambda j} < \rho$ for all $j\ge 2$. Recall that $\pr{C_{j,i} = k} = {c_k \rho^{jk}}/{C(\rho^j)}, k\in\Nat, j \ge 2, i \ge 1$, see~\eqref{eq:prob_C_j_i=k}. We obtain that 
\[
	\ex{\eul^{\lambda (j(C_{j,i}-m))}}
	= \sum_{s \ge 0} \pr{ C_{j,i}  = s + m} \eul^{\lambda j s}
	= \eul^{-\lambda j m} \frac{C(\rho^j \eul^{\lambda j })}{C(\rho^j)},
	\quad
	 i \ge 1, j \ge 2.
\]
Let $\Omega_{N,p}$ be the set of all  $\mathrm{p}=(p_2,\dots,p_N)\in\Nat_0^{N-1}$ such that $(p,p_2,\dots,p_N)\in\Omega_N$, i.e. $p=N-\sum_{2\le j\le N}jp_j$, and let  $\E_\mathrm{p}$ be the event
\[
	\E_\mathrm{p} := \{P_1 = p\} \cap \bigcap_{2 \le j \le N}\{P_j = p_j\} .
\]
Then by Markov's inequality and the independence of the $C_{j,i}$'s and the $P_j$'s, for any $\mathrm{p} \in \Omega_{N,p}$
\[
	\pr{P_1 = p, R \ge r \mid \E_\mathrm{p}}	
	= \pr{ \eul^{ \lambda{R}} \ge \eul^{\lambda r} \mid \E_\mathrm{p}}
	\le \eul^{-\lambda r} \ex{\eul^{\lambda  {R}} \mid \E_\mathrm{p}}
	= \eul^{-\lambda r} \prod_{j=2}^N 
	\left (\frac{C(\rho^j\eul^{\lambda j})}{C(\rho^j) \eul^{\lambda j m}} \right)^{p_j}.
\]
Abbreviate $\tau_j := C( (\rho\eul^{\lambda})^j)/(\rho \eul^{\lambda})^{jm}$ for $j \in \Nat$. By Lemma \ref{lem:P_j_cond_P_N_upper_bound_wo_weights} there exists $A_1>0$ such that 
\begin{equation}
\label{eq:1Rget|P_N}
\begin{split}
	\text{Pr}[P_1 = p,~& R \ge r \mid \Pa_N]
	= \sum_{\mathrm{p} \in \Omega_{N,p}} \pr{R \ge r \mid \E_\mathrm{p}} 
	\pr{\E_\mathrm{p} \mid \Pa_N } \\
	&\le A_1 \eul^{-\lambda r} N \e{-\sum_{2 \le j \le N} \frac{C(\rho^j)}{j\rho^{jm}}}
	\frac{(C(\rho)/\rho^m)^{p}}{p!}
	\sum_{\mathrm{p}\in\Omega_{N,p}} 
	\prod_{2 \le j \le N} \frac{(\tau_j/j)^{p_j}}{p_j!}.
\end{split}
\end{equation}
With Proposition \ref{lem:C(x)_approx_wo_weights} we find $A_2>0$ with
\[
	\e{-\sum_{2 \le j \le N} \frac{C(\rho^j)}{j\rho^{jm}}}
	\le \e{- c_m \sum_{2 \le j \le N} \frac{1}{j}}
	\le A_2 N^{-c_m}.
\]
Let $H_j \sim \pois{\tau_j/j}$ be independent for $j=2,\dots,N$ and set $\tau:=\text{exp}(\sum_{2\le j\le N}\tau_j /j)$. Moreover, abbreviate $B := C(\rho)/\rho^m$. From~\eqref{eq:1Rget|P_N} we obtain that there is an $A_3 > 0$ such that
\begin{equation}
\label{eq:2Yget|P_N}
\begin{split}
	\pr{P_1 = p, R \ge r \mid \Pa_N}
	& \le A_3 \eul^{-\lambda r} N^{1 - c_m}
	\cdot 
		\tau \cdot 
	\frac{B^{p}}{p!} 
		\sum_{\mathrm{p} \in \Omega_{N,p}} \prod_{j =2}^N \pr{H_j = p_j}.
\end{split}
\end{equation}
Note that
\[
	\sum_{\mathrm{p} \in \Omega_{N,p}} \prod_{j=2}^N \pr{H_j = p_j}
	= \pr{\sum_{j=2}^N j H_j = N-p}
	= \tau^{-1} \cdot [x^{N-p}] \e{\sum_{j \ge 2}\frac{\tau_j}{j}x^j  }.
\]
Observe that in the last expression we actually have to restrict the summation to the interval $2 \le j \le N$; however, $[x^M] \text{exp}(\sum_{j \ge 2}{\tau_j}x^j/j) = [x^M] \text{exp}(\sum_{2 \le j \le M}{\tau_j}x^j/j)$ for all $M \in \Nat$.
Then
\begin{align*}
	\e{\sum_{j \ge 2}\frac{\tau_j}{j}x^j  }
	&=  \e{c_m \sum_{j \ge  1} \frac{x^j}{j}} \cdot \e{- c_mx + \sum_{j \ge 2}\frac{x^j}{j}(\tau_j - c_m)  } 
	=: G(x) \cdot  H(x).
\end{align*}
By Proposition \ref{lem:C(x)_approx_wo_weights} there exists a constant $A_4>0$ such that
$
	\tau_j
	\le c_m (1 + A_4 (\rho\eul^{\lambda})^j).
$
With this at hand we deduce
that $H(x)$ has radius of convergence (at least) $(\rho\eul^{\lambda})^{-1}$, which by our choice of $\lambda$ is $> 1$. Note that $G(x) = (1-x)^{-c_m}$, which shows together with Lemma~\ref{lem:asymptpowergeom} that $G$ has property \eqref{eq:subexp2} with radius of convergence $1$. 
As $G(x)$ only has positive coefficients, by Lemma \ref{lem:coeff_product} and the remark in~\eqref{eq:asimbuniform} there is an $A_5>0$ such that 
\[
	[x^{N-p}]G(x)H(x)
	\le A_5 (N-p)^{c_m-1},
	\quad
	p=0,\dots,N-1.
\]
All in all,
\begin{align}
	\label{eq:sum_H_j_upper_bound}
	\pr{\sum_{2 \le j \le N} j H_j = N-p} \le A_5 \tau^{-1} (N-p)^{c_m-1},
	\quad
	p=0,\dots,N-1.
\end{align}
For the case $p=N$ note that the probability that $\sum_{2 \le j \le N} j H_j = 0$ equals $\tau^{-1}$.
Putting the pieces together, we get from~\eqref{eq:2Yget|P_N} that there is an $A_6 > 0$ such that
\begin{equation}
\label{eq:prpr|Nlast}
	\pr{P_1 = p, R \ge r \mid \Pa_N}
	\le A_6\eul^{-\lambda r} N^{1-c_m} 
	\left( \frac{B^N}{N!} +  \frac{B^{p}}{p!}  (N-p)^{c_m-1} \cdot \mathbf{1}[p \neq N]\right).
\end{equation}
Observe that $N^{1-c_m }B^N/N! \le e^{-\lambda N}$ for $N$ large enough. Additionally, if $N/2 \le p < N$, then for $N$ large enough
\[
	N^{1-c_m }\frac{(e^\lambda B)^{p}}{p!}  (N-p)^{c_m-1}
	= (1-p/N)^{c_m-1} \frac{(e^\lambda B)^{p}}{p!}  
	\le N^{1-c_m} \cdot  \frac{(e^\lambda B)^{p}}{p!}
	\le 1
\]
and for $0 \le p \le N/2$
\[
	N^{1-c_m }\frac{(e^\lambda B)^{p}}{p!}  (N-p)^{c_m-1}
	\le \max\{2^{1-c_m},1\} \cdot  e^{e^\lambda B} \pr{\pois{e^\lambda B} = p}
	\le \max\{2^{1-c_m},1\} \cdot  e^{e^\lambda B}
\]
is also bounded.  Plugging these bounds into~\eqref{eq:prpr|Nlast} completes the proof.
\end{proof}
We have just proven that $P_1, R$ have (joint) exponential tails when conditioned on $\Pa_N$. The next lemma is the last essential step towards the proof of Theorem \ref{thm:1_coeff_G(x,y)}, where we estimate $\pr{ \E_n \mid \Pa_N}$. Recall from~\eqref{eq:EZP} that
\[
	\pr{ \E_n \mid \Pa_N} = \pr{L + R = n - mN \mid \Pa_N},
	\quad \text{where} \quad
	L = \sum_{1 \le i \le P_1} (C_{1,i} - m).
\]
\begin{lemma}
\label{lem:probability_E_n_P_N_asymptotic_wo_weights}
	Let $C(x)$ be subexponential. Then
	\[
		\pr{ \E_n \mid \Pa_N} \sim c_{n-m(N-1)} \rho^{n-mN},
		\quad
		n,N,n-mN\to\infty.
	\]
\end{lemma}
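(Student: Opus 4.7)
The plan is to reduce $\pr{\E_n \mid \Pa_N} = \pr{L + R = n - mN \mid \Pa_N}$ to a question about a sum of iid subexponential random variables. Conditioning on $P_1 = p$, the variable $L$ is distributed as $S_p := \sum_{i=1}^{p}(C_{1,i} - m)$, a sum of $p$ iid copies of $D_1 := C_{1,1} - m$; since $\Pa_N$ is measurable with respect to $(P_j)_{j \ge 1}$ and $L$ is independent of $(P_j, C_{j,i})_{j \ge 2}$ given $P_1 = p$, this distributional identity persists under further conditioning on $R = r$ and $\Pa_N$. Writing $k = n - mN$, I therefore get the clean decomposition
\begin{equation*}
\pr{\E_n \mid \Pa_N} = \sum_{p,r \ge 0} \pr{P_1 = p,\, R = r \mid \Pa_N}\cdot \pr{S_p = k-r}.
\end{equation*}

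First I would verify that the pmf of $D_1$, namely $d_\ell := c_{\ell+m}\rho^{\ell+m}/C(\rho)$, is itself subexponential with radius of convergence $1$ and $D(1) = 1$: property~\eqref{eq:subexp2} is immediate from that of $(c_k)$, while~\eqref{eq:subexp1} follows from rewriting the convolution and using that $c_{n+2m}/c_{n+m} \to \rho^{-m}$. Fix a large threshold $K$ and split the double sum at $p + r \le K$. For $p, r$ bounded, the single-big-jump principle of Lemma~\ref{lem:subexp_several_prop}\ref{lem:random_stopped_sum} applied with the deterministic $\tau = p$ (whose p.g.f.~$z^p$ is entire) yields $\pr{S_p = k - r} \sim p \cdot d_{k-r}$ as $k\to\infty$, and $d_{k-r} \sim d_k$ by~\eqref{eq:subexp2}. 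Hence the main part is asymptotically
\begin{equation*}
d_k \sum_{p + r \le K} p \cdot \pr{P_1 = p,\, R = r \mid \Pa_N},
\end{equation*}
and letting $K \to \infty$ the sum converges to $\ex{P_1 \mid \Pa_N}$, which in turn tends to $C(\rho)/\rho^m$ by Corollary~\ref{coro:ex_P_1_cond_P_N_wo_weights}. A direct computation then gives
\begin{equation*}
d_k \cdot \frac{C(\rho)}{\rho^m} = \frac{c_{k+m}\rho^{k+m}}{C(\rho)} \cdot \frac{C(\rho)}{\rho^m} = c_{k+m}\rho^k = c_{n - m(N-1)}\rho^{n - mN},
\end{equation*}
which is precisely the claimed asymptotic.

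What remains is to show that the tail contribution from $p + r > K$ is negligible compared to $d_k$, uniformly in $n$ and $N$ and then vanishing as $K \to \infty$. For this I would combine the joint exponential estimate $\pr{P_1 = p,\, R = r \mid \Pa_N} \le A a^{p+r}$ of Lemma~\ref{lem:P_1=p_R_ge_r_cond_P_N_wo_weights} with the bound $\pr{S_p = k - r} \le C_\delta (1+\delta)^p d_{k-r}$ of Lemma~\ref{lem:subexp_several_prop}\ref{lem:long_sum_exp}, choosing $\delta > 0$ small enough that $a(1+\delta) < 1$. The sum over $p$ then collapses geometrically, and the task reduces to controlling $\sum_{r > K'} a^r d_{k-r}$ by an arbitrarily small fraction of $d_k$.

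The main obstacle is precisely this last uniform tail control in $r$: the pointwise asymptotic $d_{k-r}/d_k \to 1$ for fixed $r$ is not enough, and one has to leverage the convolution identity~\eqref{eq:subexp1} to obtain a bound of the shape $\sum_{r \ge 0} a^r d_{k-r} = O(d_k)$ that holds uniformly in the large parameter $k$. This is the manifestation at the level of deterministic sequences of the ``single big jump'' phenomenon that drives the whole argument: a large value of $L + R$ is explained with overwhelming probability by a single large summand contributing to $L$, while $R$ together with the remaining summands of $L$ stays of order $\mathcal{O}(1)$.
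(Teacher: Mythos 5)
Your proposal is correct and follows essentially the same route as the paper: the same decomposition over $(p,r)$, Lemma~\ref{lem:subexp_several_prop}~\ref{lem:random_stopped_sum} for the main term with bounded $p,r$, Corollary~\ref{coro:ex_P_1_cond_P_N_wo_weights} to identify the limiting factor $\ex{P_1\mid\Pa_N}\to C(\rho)\rho^{-m}$, and Lemma~\ref{lem:P_1=p_R_ge_r_cond_P_N_wo_weights} combined with Lemma~\ref{lem:subexp_several_prop}~\ref{lem:long_sum_exp} for the tails. The one step you flag as remaining -- uniform control of $\sum_{r>K}a^{r}d_{k-r}$ -- is closed in the paper not via~\eqref{eq:subexp1} but via~\eqref{eq:subexp2}, which yields $d_{k-r}\le C(1+\delta)^{r}d_{k}$ whenever $k-r\ge N_0$ (the remaining case $k-r<N_0$ being trivial because then $r>k-N_0$ and $(1+\delta)^{r}d_{k}>1$ for large $k$), after which the geometric factor $a(1+\delta)<1$ collapses the sum.
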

\begin{proof}
For the entire proof we abbreviate $\widetilde{N} := n - mN$. Then
\begin{equation}
\label{eq:EnPNdec}
	\pr{ \E_n \mid \Pa_N} = \sum_{p \ge 0}
	\sum_{r \ge 0 } \pr{L = \widetilde{N} - r\mid \Pa_N, P_1 = p, R = r} \pr{P_1 = p, R = r\mid \Pa_N}.
\end{equation}
For brevity, let us write in the remainder
\[
	{\cal D}_{N,p,r} = {\cal P}_N \cap \{P_1 = p\} \cap \{R = r\}
	\quad
	\text{and}
	\quad
	Q_{\widetilde{N}} := \text{Pr}\big[C_{1,1} = \widetilde{N}+m\big] = \frac{c_{n-m(N-1)} \rho^{n-m(N-1)}}{C(\rho)}.
\]
We will show that
\begin{equation}
\label{eq:prLsmallr}
	\text{Pr}\big[L = \widetilde{N} - r~|~ {\cal D}_{N,p,r}\big]
	\sim
	p \cdot Q_{\widetilde{N}}
	\quad
	\text{for}
	\quad
	p, r \in \Nat_0, \text{ as } \widetilde{N} \to \infty.
\end{equation}
Let $a \in (0,1)$ be the constant guaranteed to exist from Lemma~\ref{lem:P_1=p_R_ge_r_cond_P_N_wo_weights}, and choose $\delta > 0$ such that 
$(1+\delta)a < 1$. We will also show that there are $C > 0, N_0 \in \Nat$ such that
\begin{equation}
\label{eq:prLallr}
	\text{Pr}\big[L = \widetilde{N} - r~|~ {\cal D}_{N,p,r}\big]
	\le C (1 + \delta)^{p+r} \cdot Q_{\widetilde{N}}
	~~
	\text{for all}
	~~
	p,r \in \Nat_0, \widetilde{N}  \ge N_0.
\end{equation} 
From the two facts~\eqref{eq:prLsmallr} and~\eqref{eq:prLallr} the statement in the lemma can be obtained as follows. We will assume throughout that $\delta$ is fixed as described above, say for concreteness $\delta = (a^{-1}-1)/2$, and choose an $0 < \eps < 1$ arbitrarily. Moreover, we will fix $K\in \mathbb{N}$ in dependence of $\eps$ only, and we will split the double sum in~\eqref{eq:EnPNdec} in three parts with $(p,r)$ in the sets 
\[
	B_{\le} = \{(p,r): 0 \le p,r\le K\}, 
	\quad
	B_{>,\cdot} = \{(p,r): p > K, r\in \Nat_0\},
	\quad
	B_{\cdot, >} =  \{(p,r): p \in \Nat_0, r > K\}.
\]
We will show that the main contribution to $\pr{\E_n~|~ {\cal P}_N}$ stems from $B_\le$, while the other two parts contribute rather insignificantly. Let us begin with treating the latter parts. 
Observe that  using Lemma~\ref{lem:P_1=p_R_ge_r_cond_P_N_wo_weights}
and~\eqref{eq:prLallr} we obtain that there is a constant $C' > 0$ such that for all 
$r \in \Nat_0$ and $K \ge K_0(\eps)$ 
\[
\begin{split}
		\sum_{p \ge K} \pr{L = \widetilde{N} - r~|~ {\cal D}_{N,p,r}} \pr{P_1 = p, R = r~|~ \Pa_N}
		& \le 
		C' \sum_{p \ge K} (1+\delta)^{p+r} \cdot a^{p+r} \cdot Q \\
		&\le \eps \cdot ((1+\delta)a)^{r} \cdot Q_{\widetilde{N}}.
\end{split}
\] 
Since $(1+\delta)a < 1$, summing this over all $r$ readily yields for $c = (1-(1+\delta)a)^{-1}$ that
\begin{equation}
\label{eq:boundB>.}
\sum_{(p,r) \in B_{>, \cdot}} \pr{L = \widetilde{N} - r~|~ {\cal D}_{N,p,r}} \pr{P_1 = p, R = r~|~ \Pa_N} \le c\eps \cdot Q_{\widetilde{N}}.
\end{equation}
Completely analogously with the roles of $p,r$ interchanged  we obtain that also 
\begin{equation}
\label{eq:boundB.>}
\sum_{(p,r) \in B_{\cdot,>}} \pr{L = \widetilde{N} - r~|~ {\cal D}_{N,p,r}} \pr{P_1 = p, R = r~|~ \Pa_N} \le c\eps \cdot Q_{\widetilde{N}}.
\end{equation}
It remains to handle the part of the sum in~\eqref{eq:EnPNdec} with $p,r\in B_\le$. Using~\eqref{eq:prLsmallr} we infer that
\[
	\sum_{(p,r) \in B_{\le}} \pr{L = \widetilde{N} - r~|~ {\cal D}_{N,p,r}} \pr{P_1 = p, R = r~|~ \Pa_N}
	\sim
	\sum_{(p,r) \in B_{\le}} p \pr{P_1 = p, R = r~|~ \Pa_N} \cdot Q_{\widetilde{N}}.
\]
Using Lemma~\ref{lem:P_1=p_R_ge_r_cond_P_N_wo_weights} once again note that we can choose $K$ large enough such that 
\[
	\sum_{0 \le p \le K} \sum_{r \ge K} p \pr{P_1 = p, R = r~|~ \Pa_N} 
	\le A \sum_{0 \le p \le K} \sum_{r \ge K} p a^{p+r}
	\le \eps
\]
and that 
\[
	\left|
		\sum_{p \ge 0} p \pr{P_1 = p~|~ \Pa_N} 
		- \sum_{0 \le p \le K} p \pr{P_1 = p~|~ \Pa_N}
	\right|
	= 	\left|
		\sum_{p > K} p \pr{P_1 = p~|~ \Pa_N} 
	\right|
	\le \eps.
\]
Altogether this establishes that
\[
	\left|
		\sum_{(p,r) \in B_{\le}}
			\pr{L = \widetilde{N} - r~|~ {\cal D}_{N,p,r}}
			\pr{P_1 = p, R = r~|~ \Pa_N}
			- \ex{P_1 \mid {\cal P}_N }Q_{\widetilde{N}}
	\right| \le 2 \eps Q_{\widetilde{N}}.
\]
Corollary~\ref{coro:ex_P_1_cond_P_N_wo_weights} asserts that $\ex{P_1 \mid {\cal P}_N } \to C(\rho)\rho^{-m}$. Since $\eps > 0$ was arbitrary, combining this with~\eqref{eq:boundB>.} and~\eqref{eq:boundB.>} we obtain from~\eqref{eq:EnPNdec} that $\pr{\E_n \mid {\cal P}_N} \sim C(\rho)\rho^{-m} \cdot Q_{\widetilde{N}}$, which is the claim of the lemma.

In order to complete the proof it remains to show the two claims~\eqref{eq:prLsmallr} and~\eqref{eq:prLallr}. We begin with~\eqref{eq:prLsmallr}. Note that for $p, r \in \Nat_0$
\begin{equation}
\label{eq:LPPRsimple}
	\text{Pr}\big[L = \widetilde{N} - r\mid \Pa_N, P_1 = p, R = r\big]
	= \pr{\sum_{1 \le i \le p} C_{1,i} = \widetilde{N}-r+pm}.
\end{equation}
Recall that $\text{Pr}[C_{1,1} = k] = c_k \rho^k / C(\rho)$, where $\rho$ is the radius of convergence of $C$. Since $C$ is subexponential, $c_{k-1} \sim \rho c_k$ and thus the distribution of the~$C_{1,i}$'s is also subexponential with $\text{Pr}[C_{1,1} = k-1] \sim \text{Pr}[C_{1,1} = k]$. 
We obtain with Lemma~\ref{lem:subexp_several_prop}~\ref{lem:random_stopped_sum} that the latter probability is $\sim p \text{Pr}[C_{1,1} = \widetilde{N} - r + pm]$, as $\widetilde{N} \to \infty$.
Moreover, as $\widetilde{N} \to \infty$, $\text{Pr}[C_{1,1} = \widetilde{N} - r + pm] \sim Q_{\widetilde{N}}$, and \eqref{eq:prLsmallr} is established.

We finally show~\eqref{eq:prLallr}. Our starting point is again~\eqref{eq:LPPRsimple}. Note that with Lemma~\ref{lem:subexp_several_prop}~\ref{lem:long_sum_exp} there are $C > 0$ and $N_0\in\Nat$ such that the sought probability is at most $C(1+\delta)^p \text{Pr}[C_{1,1} = \widetilde{N} - r + pm]$ for all $\widetilde{N} - r + pm \ge N_0$. Moreover, as we have argued in the previous paragraph, the distribution of $C_{1,1}$ is subexponential with $\text{Pr}[C_{1,1} = k-1] \sim \text{Pr}[C_{1,1} = k]$; we thus may choose $C$ and $N_0$ large enough such that in addition $\text{Pr}[C_{1,1} = \widetilde{N} - r + pm] \le C (1+\delta)^r Q_{\widetilde{N}}$. This establishes~\eqref{eq:prLallr} if $\widetilde{N} - r + pm \ge N_0$. To treat the remaining cases, note that in this situation we have $r > \widetilde{N}-N_0$. Since the probability generating series of~$C_{1,1}$ is subexponential with radius of convergence 1, we obtain that~$C(1+\delta)^r Q_{\widetilde{N}} > 1$ for sufficiently large~$\widetilde{N}$; thus~\eqref{eq:prLallr} is trivially true in this case.
\end{proof}

With all these facts at hand the proof of Theorem \ref{thm:1_coeff_G(x,y)} is straightforward. With Lemma \ref{lem:coeff_in_prob_wo_weights} and \ref{thm:coeff_F_tilde_F_wo_weights} (in particular, Equation~\eqref{eq:prPNexplicit}) we obtain as $n,N,n-mN\to\infty$,
\begin{align*}
	[x^n y^N]G(x,y)
	&=\frac1{G(\rho)} \rho^{-n} \pr{\E_n \mid \Pa_N} \pr{\Pa_N}  \\
	&\sim \frac{1}{\Gamma(c_m)} \e{\sum_{j \ge 1} \frac{C(\rho^j)-c_m\rho^{jm}}{j} \rho^{-jm}} 
	N^{c_m-1} c_{n-m(N-1)} .
\end{align*}

\subsection{Proof of Theorem \ref{thm:3_size_largest_comp_L_n_N}}
	\label{subsec:proof_3}

Let us begin with (re-)collecting all basic definitions that will be needed in the proof. Suppose that $C(x)$ is subexponential with radius of convergence $0 < \rho < 1$ and set $m := \min\{k \in \Nat: c_k > 0\}$, see also~\eqref{eq:mc}. Moreover, let $P_j \sim\pois{C(\rho^j)/j}, j\in\Nat$ and $C_{j,1},\dots,C_{j,P_j}, {j\in\Nat}$ have the distribution specified in~\eqref{eq:prob_C_j_i=k}, that is, $\pr{C_{j,i} = k} = {c_k \rho^{jk}}/{C(\rho^j)}, k,i,j\in\Nat$. We assume that all these random variables are independent.
 Let $\Pa_N$, $\E_n$ be as in~\eqref{eq:def_E_n_P_N}, that is, with
 \[
	P = \sum_{j\ge 1} j P_j,
	\quad
	L = \sum_{1 \le i \le P_1} (C_{j,i} - m),
	\quad R = \sum_{j\ge 2} j \sum_{1 \le i \le P_j} (C_{j,i}-m)
 \]
we have that $\Pa_N = \{P=N\}$ and $\E_n = \{L+R = n-mP\}$.
 
With this notation at hand, let  $\mathsf{G}_{n,N}$ be a uniformly drawn random object from ${\cal G}_{n,N}$, meaning that the number of atoms is $n$ and the number of components $N$. According to Lemma \ref{lem:algo_same_distri_boltzmann} and using that the Boltzmann model induces the uniform distribution on objects of the same size, we infer that 
\[
	\pr{\mathsf{G}_{n,N} = G}
	= \frac1{|{\cal G}_{n,N}|}
	= \frac{\rho^n / C(\rho)}{|{\cal G}_{n,N}| \rho^n / C(\rho)}
	= \frac{\pr{\Lambda G = G}}{\pr{\Pa_N, \E_n}}
	= \pr{\Lambda G = G \mid \Pa_N, \E_n},
	\quad
	G \in {\cal G}_{n,N},
\]
that is, studying the distribution of $\mathsf{G}_{n,N}$ boils down to considering the distribution of $\Lambda G$ conditional on both $\Pa_N, \E_n$.
This is the starting point of our investigations. In particular, $\mathsf{G}_{n,N}$ has $N$ components with sizes given by the vector $(C_{j,i}: 1 \le j \le N, 1 \le i \le P_j)$. Our aim is here to study the properties of that vector in the conditional space given by  $\Pa_N, \E_n$. To this end, set
\begin{equation}
	M^{*} := \max_{j\ge 1,1\le i\le P_j}C_{j,i}
	\quad \text{ and } \quad 
	C^{*}_p := \max\{C_{1,1},\dots,C_{1,p}\} \quad \text{for $p\in \Nat$}.
\label{eq:M*C*}
\end{equation}
Then the statement of the theorem is that, conditional on $\Pa_N, \E_n$, we have that $M^* = n-mN + \mathcal{O}_p(1)$; since the total number of atoms is $n$, the number of components is $N$, and the smallest component contains $m$ atoms, this immediately implies that there are $N+\mathcal{O}_p(1)$ components with exactly $m$ atoms, and all remaining components have a total size of $\mathcal{O}_p(1)$ as well.

The general proof strategy in the remaining section is as follows. We first show in Lemma~\ref{lem:P_1=p_R_ge_r_cond_E_n_P_N_wo_weights} that both $P_1, R$ are ``small'' in the conditioned space; this makes sure that only a bounded number of entries in the vector $(C_{j,i})_{j\ge 2,1\le i\le P_j}$ are larger than $m$, and that this total excess is bounded. Hence, the remaining number of $n - (N-P_1)m + \mathcal{O}_p(1)$ atoms is to be found in the components with sizes in $(C_{1,i})_{1\le i\le P_1}$. In Lemma~\ref{lem:P_1=p_R_ge_r_cond_E_n_P_N_wo_weights} we exclude that $P_1$ grows too large conditioned on $\E_n,\Pa_N$; indeed,  we show that it is stochastically bounded. Then the property of subexponentiality guarantees that only the maximum of the $C_{1,i}$'s dominates the entire sum, cf. Lemma~ \ref{lem:subexp_several_prop}~\ref{lem:big_jump_principle}, and Theorem \ref{thm:3_size_largest_comp_L_n_N} follows. 

Let us now fill this overview with details. Recall Lemma~\ref{lem:P_1=p_R_ge_r_cond_P_N_wo_weights}, which says that $P_1,R$ have (joint) exponential tails given $\Pa_N$. We show that conditioning in addition to $\E_n$ does not change the behaviour qualitatively. The proof can be found at the end of the section.
\begin{lemma}
	\label{lem:P_1=p_R_ge_r_cond_E_n_P_N_wo_weights}
There exist constants $A>0$ and $0<a<1$ such that
\[
	\pr{P_1=p, R = r \mid \E_n, \Pa_N}
	\le A \cdot a^{p+r},
	\quad
	p,r,n,N\in\Nat.
\]
\end{lemma}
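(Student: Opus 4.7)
The plan is to extend the estimate of Lemma~\ref{lem:P_1=p_R_ge_r_cond_P_N_wo_weights} by incorporating the extra conditioning on $\E_n$ through Bayes' formula:
\[
\pr{P_1 = p, R = r \mid \E_n, \Pa_N} = \frac{\pr{\E_n \mid P_1 = p, R = r, \Pa_N}\cdot \pr{P_1 = p, R = r \mid \Pa_N}}{\pr{\E_n \mid \Pa_N}}.
\]
Since $(C_{1,i})_{i \ge 1}$ is independent of all conditioning variables, and conditional on $\{P_1=p, R=r, \Pa_N\}$ the event $\E_n$ reduces to $L = n-mN-r$, the first numerator factor equals $\pr{\sum_{i=1}^p C_{1,i} = s}$ with $s := n - m(N-p) - r$. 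The second numerator factor is bounded by $A_0\,a_0^{p+r}$ for some $0 < a_0 < 1$ by Lemma~\ref{lem:P_1=p_R_ge_r_cond_P_N_wo_weights}, and the denominator satisfies $\pr{\E_n\mid\Pa_N} \ge A_1\, c_{n-m(N-1)}\rho^{n-mN}$ for some $A_1 > 0$ by Lemma~\ref{lem:probability_E_n_P_N_asymptotic_wo_weights}, at least in the regime $n, N, n-mN \to \infty$.

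To bound the first numerator factor I would observe that $C_{1,1}$ has probability generating function $C(\rho x)/C(\rho)$, which is subexponential with radius $1$ (inheriting both conditions from $C$). Lemma~\ref{lem:subexp_several_prop}~\ref{lem:long_sum_exp} then yields $\pr{\sum_{i=1}^p C_{1,i} = s} \le C_2(1+\delta)^p\, c_s \rho^s / C(\rho)$ for any small $\delta > 0$ and all $s$ above some threshold $s_0$. Combining the three estimates, the target probability is bounded, up to a multiplicative constant, by
\[
(1+\delta)^p a_0^{p+r} \cdot \frac{d_{k+l}}{d_k},
\]
where $d_j := c_j \rho^j$, $k := n - m(N-1)$, and $l := m(p-1) - r$; here I used that $\rho^{s-(n-mN)} = \rho^{mp-r}$ and thus the ratio of coefficients rearranges into $\rho^m \cdot d_{k+l}/d_k$. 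Subexponentiality implies $d_{j+1}/d_j \to 1$, so for any $\eta > 0$ and $k$ large enough one has $d_{k+l}/d_k \le (1+\eta)^{|l|} \le (1+\eta)^{mp+r}$. Choosing $\delta, \eta > 0$ small enough so that both $b_1 := (1+\delta)(1+\eta)^m a_0 < 1$ and $b_2 := (1+\eta)a_0 < 1$ (possible since $a_0 < 1$) yields the desired bound of order $\tilde a^{p+r}$ with $\tilde a := \max(b_1, b_2) < 1$.

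The main obstacle is the boundary regime. When $s$ falls below the threshold $s_0$ of Lemma~\ref{lem:subexp_several_prop}~\ref{lem:long_sum_exp}, $r$ must exceed $n - mN + mp$ up to an additive constant, and one has to fall back on the trivial estimate $\pr{\sum_i C_{1,i} = s} \le 1$. The loss of the factor $c_s\rho^s$ then has to be balanced against the exponential smallness $a_0^r$ furnished by Lemma~\ref{lem:P_1=p_R_ge_r_cond_P_N_wo_weights}, exploiting that in the subexponential setting $d_{n-m(N-1)}$ can only decay polynomially so that $1/d_{n-m(N-1)}$ is dominated by an exponential in $r$. Symmetrically, values of $(n,N)$ outside the range where the asymptotics of Lemma~\ref{lem:probability_E_n_P_N_asymptotic_wo_weights} apply — in particular those with $n-mN$ bounded — admit only finitely many $(p,r)$ carrying positive conditional probability, and can be absorbed by enlarging $A$, in the same spirit as the end-of-proof step of Lemma~\ref{lem:P_1=p_R_ge_r_cond_P_N_wo_weights}.
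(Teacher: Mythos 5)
Your proposal is correct and follows essentially the same route as the paper: Bayes' decomposition into the three factors $\pr{\E_n \mid P_1=p, R=r, \Pa_N}$, $\pr{P_1=p,R=r\mid\Pa_N}$ and $\pr{\E_n\mid\Pa_N}^{-1}$, with the first controlled via Lemma~\ref{lem:subexp_several_prop}~\ref{lem:long_sum_exp} together with the ratio property \eqref{eq:subexp2} (the paper packages this as the bound \eqref{eq:prLallr} established inside the proof of Lemma~\ref{lem:probability_E_n_P_N_asymptotic_wo_weights}), the second by Lemma~\ref{lem:P_1=p_R_ge_r_cond_P_N_wo_weights}, and the third by Lemma~\ref{lem:probability_E_n_P_N_asymptotic_wo_weights}. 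One small imprecision: when $n-mN$ is bounded only $r$ is forced to be bounded, not $p$ (so ``finitely many $(p,r)$'' is not quite right), but the $a^p$ decay still comes from Lemma~\ref{lem:P_1=p_R_ge_r_cond_P_N_wo_weights} once $\pr{\E_n\mid\Pa_N}$ is bounded below in that regime, and the paper is no more explicit about this edge case.
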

With this lemma the proof of the theorem can be completed as follows. Let $\eps > 0$ be arbitrary. Abbreviate $\widetilde{N} = n-mN$. With $M^*$ as in~\eqref{eq:M*C*} we will show that there is $K\in\Nat$ such that
\[
	\pr{ | M^{*} - \widetilde{N} | \ge K \mid \E_n, \Pa_N}
	< \eps
\]
for $n,N,\widetilde{N}$ sufficiently large, which is the statement of the theorem. According to Lemma~\ref{lem:P_1=p_R_ge_r_cond_E_n_P_N_wo_weights} 
there exist constants $C_R,C_P\in\Nat$ such that 
\[
	\pr{ R \ge C_R, P_1 \ge C_P \mid \E_n, \Pa_N } < \eps/2,
	\quad
	n,N,\widetilde{N} \in \Nat.
\]
We deduce 
\begin{equation}
	\pr{ | M^{*} - \widetilde{N} | \ge K \mid \E_n, \Pa_N}
	\le \frac{\eps}{2} + \sum_{0 \le r \le C_R} \sum_{1 \le p \le C_P} 
	\pr{ { |M^{*} - \widetilde{N}| } \ge K \mid \E_n , \Pa_N, R=r, P_1 = p}.
\label{eq:M*geK}
\end{equation}
Note that we only need to consider values of $p$ which are larger than $1$ as $p=0$ excludes $R=r\le C_R < \widetilde{N}$.
The event ``$\E_n , \Pa_N, R=r, P_1 = p$'' implies that $\card{C_{j,i}} \le m+r$ for all $j\ge 2,1\le i \le P_j,$ and $S_p := \sum_{1 \le i \le p} C_{1,i} = \widetilde{N} - r + pm$.  Recall the definition of $C^*$ from~\eqref{eq:M*C*}. Assume that $C^{*}_p\le m + r$, then we get the contradiction $\widetilde{N}-r+pm=S_p \le p(m+r) < \widetilde{N} - r +pm$ for $\widetilde{N}$ large enough. 
It follows that $C^{*}_p > m+r$ and hence we are allowed to interchange $C^{*}_p$ and $M^{*}$ in this conditioned space. That yields
\[
	\pr{ |M^{*} - \widetilde{N} | \ge K \mid \E_n , \Pa_N, R=r, P_1 = p}
	= \pr{ |C^{*}_p - \widetilde{N} | \ge K \mid S_p = \widetilde{N} - r + pm},
\]
for $1 \le p \le C_P,0\le r \le C_R$.
As $C^{*}_p$ is at most $\widetilde{N} - r + pm$ under this condition, we particularly obtain that $\{C^{*}_p \ge \widetilde{N} + K \} = \emptyset$ for $K \ge mC_P$ as long as $0\le p \le C_P$ and $r\ge0$. Consequently, for $1 \le p \le C_P, 0\le r \le C_R$, 
\[
	\pr{ \lvert C^{*}_p - \widetilde{N} \rvert \ge K \mid S_p = \widetilde{N} - r + pm}
	= \pr{ C^{*}_p \le \widetilde{N} - K \mid S_p = \widetilde{N} - r + pm}.
\]
Now  Lemma~\ref{lem:subexp_several_prop}~\ref{lem:big_jump_principle} is applicable as $C_{1,i}$ has subexponential distribution for $1\le i\le p$ and hence for $1\le p\le C_P,0\le r\le C_R$ we have $(C^{*}_p \mid S_p =\widetilde{N} + r -pm) = \widetilde{N} + r -pm + \mathcal{O}_p(1)$ as $\tilde{N}\to\infty$. Consequently, choosing $K$ large enough, 
\[
	\pr{ C^{*}_p \le \widetilde{N} - K \mid S_p = \widetilde{N} - r + pm}
	< \frac{\eps}{2C_RC_P},
	\quad 1\le p\le C_P ,0\le r\le C_R.
\]
We conclude from~\eqref{eq:M*geK}
\begin{align*}
	\pr{|M^{*} - \widetilde{N}| \ge K \mid \E_n, \Pa_N}
	\le \frac{\eps}{2} + \sum_{0 \le r \le C_R} \sum_{1 \le p \le C_P}  
	\pr{ C^{*}_p\le \widetilde{N} - K \mid S_p = \widetilde{N} - r + pm} 
	< \eps.
\end{align*}
Since $\eps>0$ was arbitrary we have just proven that the largest component satisfies $(M^{*} \mid \E_n, \Pa_N) = \widetilde{N} + \mathcal{O}_p(1)$, and the proof is completed. 
\begin{proof}[Proof of Lemma \ref{lem:P_1=p_R_ge_r_cond_E_n_P_N_wo_weights}]
We start with the observation
\begin{align}
\label{eq:P1_R_cond_split_up}
	\pr{P_1 = p, R = r \mid \E_n, \Pa_N}
	= \pr{ \E_n \mid P_1 = p, R=r, \Pa_N} \pr{P_1 = p, R=r\mid \Pa_N} \pr{\E_n \mid\Pa_N}^{-1}.
\end{align}
Set $\widetilde{N} := n - mN$ and $L_p := \sum_{1 \le i \le p} (C_{1,i}-m)$ for $p\in\Nat_0$ as well as $Q_{\widetilde{N}} = \text{Pr}[C_{1,1} -m = \widetilde{N}]$. Let $0<a<1$ be the constant from Lemma \ref{lem:P_1=p_R_ge_r_cond_P_N_wo_weights} and let $\delta>0$ be such that $(1+\delta)a<1$. With \eqref{eq:prLallr} we obtain that there exists $A_1>0$ with
\[
	\pr{ \E_n \mid P_1 = p, R=r, \Pa_N}
	= \pr{ L_p = \widetilde{N} - r \mid \Pa_N}
	\le A_1 (1+\delta)^{p+r} Q_{\widetilde{N}},
	\quad p,r,n,N\in\Nat.
\]
Lemma \ref{lem:P_1=p_R_ge_r_cond_P_N_wo_weights} tells us that we find $A_2>0$ with
\[
	\pr{P_1=p,R=r\mid \Pa_N} 
	\le A_2 a^{p+r},
	\quad p,r,N\in\Nat.
\]
Finally, according to Lemma \ref{lem:probability_E_n_P_N_asymptotic_wo_weights} there is a constant $A_3$ such that
\[
	\pr{\E_n\mid\Pa_N}
	\ge A_3 Q_{\widetilde{N}},
	\quad n,N\in\Nat,
\]
and the claim follows with $a$ replaced by $(1+\delta)a < 1$ by plugging everything into \eqref{eq:P1_R_cond_split_up}.
\end{proof}

\subsection{Proof of Theorem \ref{thm:4_distribution_of_remainder_R_n_N}}
	\label{subsec:proof_4}
For the proof of this theorem we use the equivalent definition of multisets in which all objects not occurring in $G\in\Aux$ are counted with multiplicity $d=0$.
Let $G=\{(C,d_C):C\in\C_{>m}\}\cup\{(C,0):C\in\C_m\}\in\Aux$ and assume that $N(n)\equiv N$ is such that $N(n),n-mN(n)\to\infty$ as $n\to\infty$. Let us write $R_{n,N}$ for the object obtained after removing (i.e. setting the multiplicity to $0$) all objects of size $m$ and a largest component (i.e. subtracting the multiplicity by one) from $\mathsf{G}_{n,N}$. The statement of the theorem is equivalent to showing that
\[
	\pr{R_{n,N} = G}
	\to \e{-\sum_{j\ge 1}\frac{C(\rho^j)-c_m\rho^{jm}}{j\rho^{jm}}}
	\rho^{\lvert G\rvert - m \kappa(G)},
	\quad 
	n\to\infty,
\]
see also~\eqref{eq:boltzmannG>m}. Defining the family of multiplicity counting functions $(d_C(\cdot))_{C\in\C}$ by $(d_C(G))_{C\in\C} = (d_C)_{C\in\C}$ for $G=\{(C,d_C):C\in\C\}\in\Aux$ we immediately obtain  that
\[
	\pr{R_{n,N} = G}
	= \pr{ \forall C\in\C_{>m}: d_C(R_{n,N}) = d_C}.
\]
Let $S>\max\{m,\lvert G\rvert\}$ be some arbitrary integer to be specified later. We infer that
\[
	\pr{R_{n,N} = G}
	\le \pr{ \forall C\in\C_{m+1,S}: d_C(R_{n,N}) = d_C}.
\]
To obtain a lower bound, since $S > \lvert G\rvert$, we observe that $\{\forall C\in\C_{>m}: d_C(R_{n,N})=d_C\}$ is the same as $\{\forall C\in\C_{m+1,S}: d_C(R_{n,N})=d_C\}\cap \{\forall C\in\C_{>S}: d_C(R_{n,N}) = 0\}$. Moreover, note that  $\lvert R_{n,N}\rvert\le S$ implies $d_C(R_{n,N}) = 0$ for all $C\in\C_{>S}$. Thus
\begin{align*}
	\pr{R_{n,N} = G}
	&\ge \pr{\forall C\in\C_{m+1,S}: d_C(R_{n,N})=d_C,
	\lvert R_{n,N}\rvert\le S} \\
	&\ge \pr{\forall C\in\C_{m+1,S}: d_C(R_{n,N})=d_C} 
	- \pr{\lvert R_{n,N} \rvert > S}.
\end{align*}
Let $\eps>0$. According to Theorem \ref{thm:3_size_largest_comp_L_n_N} there is $S_1 > \max\{m,\lvert G\rvert\}$ so that $\pr{\lvert R_{n,N} \rvert > S_1}<\eps$. Hence $\pr{R_{n,N} = G}$ differs by at most $\eps$ from $\pr{\forall C\in\C_{m+1,S}:d_C(R_{n,N})=d_C}$ for all $S>S_1$.
Let us write $L_{n,N}$ for the size of a largest component in $\mathsf{G}_{n,N}$. Theorem \ref{thm:3_size_largest_comp_L_n_N} guarantees that $L_{n,N}$ is unbounded whp, and so we obtain for any $S \in \mathbb{N}$
\[
	\pr{\forall C\in\C_{m+1,S}: d_C(R_{n,N})=d_C}
	= \pr{\forall C\in\C_{m+1,S}: d_C(R_{n,N})=d_C, \lvert L_{n,N}\rvert > S} + o(1).
\]
However, the event $\{\forall C\in\C_{m+1,S}:d_C(R_{n,N})=d_C, \lvert L_{n,N}\rvert > S\}$ is equivalent to the event $\{\forall C\in\C_{m+1,S}:d_C(\mathsf{G}_{n,N})=d_C, \lvert L_{n,N}\rvert > S\}$, since we obtain $R_{n,N}$ by removing all components with size $m$ and a largest component (of size $>S$) from $\mathsf{G}_{n,N}$. Now we add and subtract $\pr{\forall C\in\C_{m+1,s}:d_C(\mathsf{G}_{n,N})=d_C, \lvert L_{n,N} \rvert \le S} = o(1)$ in order to get rid of the event $\lvert L_{n,N}\rvert >S$ and arrive at the fact
\[
	\pr{\forall C\in\C_{m+1,S}: d_C(R_{n,N})=d_C}
	= \pr{\forall C\in\C_{m+1,S}: d_C(\mathsf{G}_{n,N})=d_C} + o(1).
\]
Combining all previous facts yields that for $n$ sufficiently large
\begin{equation}
\label{eq:prRnNG}
	\big|\pr{R_{n,N} = G} - \pr{\forall C\in\C_{m+1,S}: d_C(\mathsf{G}_{n,N})=d_C}\big| \le 2\eps
\end{equation}
and thus we are left with estimating $\pr{\forall C\in\C_{m+1,S}: d_C(\mathsf{G}_{n,N})=d_C}$.
For $\boldrm{v}_S := (v_C)_{C\in\C_{m+1,S}}$ denote by $G(x,y,\boldrm{v}_S)$ the generating series of $\Aux$ such that $x$ marks the size, $y$ the number of components and $\boldrm{v}_S = (v_C)_{C\in\C_{m+1,S}}$ the multiplicities of $(C)_{C\in\C_{m+1,S}}$, or in other words: for $\ell,k\in\Nat_0, \boldrm{t}_S := (t_C)_{C\in\C_{m+1,S}} \in \Nat_0^{\lvert\C_{m+1,S}\rvert}$ the coefficients are given by
\[
	g_{\ell,k,\boldrm{t}_S}
	= \lvert \{ G\in\Aux : \lvert G \rvert = \ell, \kappa(G)=k,\forall C\in\C_{m+1,S}:d_C(G) = t_C\} \rvert.
\]
Setting $v_C=1$ for all $C\in\C_{m+1,S}$ we obtain the generating series $G(x,y)$ counting only size and number of components by $x$ and $y$ respectively. As $\mathsf{G}_{n,N}$ is drawn uniformly at random from $\Aux_{n,N}$ the proof reduces to determining 
\[
	\pr{\forall C\in\C_{m+1,S}: d_C(\mathsf{G}_{n,N})=d_C}
	= \frac{[x^n y^N \boldrm{v}_s^{\boldrm{d}_S}]G(x,y,\boldrm{v}_S)}{[x^n y^N]G(x,y)}.
\]
The following lemma, whose proof is shifted to the end of this section, accomplishes this task.
\begin{lemma}
\label{lem:coefficients_G(x,u,v)}
Let $\boldrm{d} = (d_C)_{C\in\C_{m+1,S}}$ with $D:=\sum_{C\in\C_{m+1,S}}\lvert C\rvert d_C$ and $D':=\sum_{C\in\C_{m+1,S}}d_C$. Then
\[
	\frac{[x^n y^N \boldrm{v}_s^{\boldrm{d}_S}]G(x,y,\boldrm{v}_S)}{[x^n y^N]G(x,y)}
	\to \rho^{D-mD'} 
	\prod_{C\in\C_{m+1,S}}(1-\rho^{\lvert C\rvert-m}),
	\quad
	n\to\infty.
\]
\end{lemma}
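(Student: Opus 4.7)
The plan is to use the product representation of the multiset generating function and reduce to an alternating sum of ratios, each of which can be evaluated asymptotically by Theorem \ref{thm:1_coeff_G(x,y)}.

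\textbf{Step 1: Rewrite the generating function as a product.} Starting from the equivalent form~\eqref{eq:alternativeG} of $G(x,y)$, the trivariate series that additionally marks the multiplicities of all $C\in\C_{m+1,S}$ is
\[
G(x,y,\boldrm{v}_S)=\prod_{C\in\C_{m+1,S}}\frac{1}{1-v_C\,y\,x^{\lvert C\rvert}}\cdot\prod_{C\in\C\setminus\C_{m+1,S}}\frac{1}{1-y\,x^{\lvert C\rvert}}.
\]
Extracting $[\boldrm{v}_S^{\boldrm{d}_S}]$ is immediate because each factor involving $v_C$ is a geometric series in $v_C$; using $D=\sum_C |C|d_C$ and $D'=\sum_C d_C$ and then completing back to $G(x,y)$ by inserting and removing the missing factors gives
\[
[\boldrm{v}_S^{\boldrm{d}_S}]G(x,y,\boldrm{v}_S)
=x^{D}y^{D'}\,G(x,y)\prod_{C\in\C_{m+1,S}}\bigl(1-y\,x^{\lvert C\rvert}\bigr).
\]

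\textbf{Step 2: Expand the finite product and extract coefficients.} Since $\C_{m+1,S}$ is a finite set, I expand the product as a finite alternating sum over subsets $A\subseteq\C_{m+1,S}$, writing $\sigma(A):=\sum_{C\in A}\lvert C\rvert$. Extracting $[x^n y^N]$ then yields
\[
[x^n y^N\boldrm{v}_S^{\boldrm{d}_S}]G(x,y,\boldrm{v}_S)
=\sum_{A\subseteq \C_{m+1,S}}(-1)^{\lvert A\rvert}\,g_{n-D-\sigma(A),\,N-D'-\lvert A\rvert}.
\]

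\textbf{Step 3: Asymptotics of each term.} For each fixed $A$, the shifts $D,D',\sigma(A),\lvert A\rvert$ are bounded by constants depending only on $S$, so the hypothesis $n,N,n-mN\to\infty$ of Theorem \ref{thm:1_coeff_G(x,y)} is inherited by the shifted parameters. A direct computation shows
\[
(n-D-\sigma(A))-m\bigl(N-D'-\lvert A\rvert-1\bigr)=\bigl(n-m(N-1)\bigr)-(D-mD')-\bigl(\sigma(A)-m\lvert A\rvert\bigr),
\]
and the shift on the right is a fixed integer. Applying Theorem \ref{thm:1_coeff_G(x,y)} to both numerator and denominator and using that $(N-D'-\lvert A\rvert)^{c_m-1}\sim N^{c_m-1}$, together with the consequence of subexponentiality that $c_{k-j}/c_k\to\rho^j$ for every fixed integer $j$, I obtain
\[
\frac{g_{n-D-\sigma(A),\,N-D'-\lvert A\rvert}}{g_{n,N}}\longrightarrow \rho^{(D-mD')+(\sigma(A)-m\lvert A\rvert)}.
\]

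\textbf{Step 4: Sum and recognize the product.} Because the index set is finite, summing the asymptotics term-by-term is legitimate, giving
\[
\frac{[x^ny^N\boldrm{v}_S^{\boldrm{d}_S}]G(x,y,\boldrm{v}_S)}{[x^ny^N]G(x,y)}\longrightarrow \rho^{D-mD'}\sum_{A\subseteq\C_{m+1,S}}(-1)^{\lvert A\rvert}\rho^{\sigma(A)-m\lvert A\rvert}=\rho^{D-mD'}\prod_{C\in\C_{m+1,S}}\bigl(1-\rho^{\lvert C\rvert-m}\bigr),
\]
where the last identity is the standard expansion of a product. This is precisely the claim. The only delicate point is making sure the hypotheses of Theorem \ref{thm:1_coeff_G(x,y)} remain valid after the bounded shifts in each summand, but that reduces to the observation above; the rest is bookkeeping with the subexponentiality relation $c_{k-j}\sim\rho^j c_k$.
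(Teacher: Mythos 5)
Your proposal is correct and follows essentially the same route as the paper: both reduce to the identity $[\boldrm{v}_S^{\boldsymbol{d}_S}]G(x,y,\boldrm{v}_S)=x^{D}y^{D'}G(x,y)\prod_{C\in\C_{m+1,S}}(1-yx^{\lvert C\rvert})$, expand the finite product, and apply Theorem~\ref{thm:1_coeff_G(x,y)} together with $c_{k-j}\sim\rho^{j}c_k$ to each boundedly shifted coefficient before summing the finitely many terms. The only cosmetic difference is that you derive the trivariate series from the product representation~\eqref{eq:alternativeG}, whereas the paper obtains the same factorization from the multivariate exponential formula~\eqref{eq:ogf_G_wo_weights}.
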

\noindent
Lemma \ref{lem:coefficients_G(x,u,v)} yields directly for sufficiently large $n$
\[
	\biggl\lvert\pr{\forall C\in\C_{m+1,S}:d_C(\mathsf{G}_{n,N})=d_C}
	-\rho^{\lvert G\rvert-m\kappa(G)} 
	\prod_{C\in\C_{m+1,S}}(1-\rho^{\lvert C\rvert-m}) \biggr\rvert
	<\eps.
\]
Now observe that with defining $C_{m+1,S}(x) := \sum_{m<\ell\le S}\lvert\C_\ell\rvert x^\ell$ we obtain 
\[
	\lim_{S\to\infty} \prod_{C\in\C_{m+1,S}}(1-\rho^{\lvert C\rvert -m})
	= \lim_{S\to\infty} \prod_{m < \ell \le S} \e{|\C_\ell| \log(1 - \rho^{\ell-m})}
	= \lim_{S\to\infty} \e{-\sum_{j\ge 1} \frac{C_{m+1,S}(\rho^j)}{j\rho^{jm}}}.
\]
By the continuity of $\e{\cdot}$ and  monotone convergence this equals $G_{>m}(\rho)^{-1}$. Choose $S_2>\max\{m,\lvert G\rvert\}$ large enough such that $\prod_{C\in\C_{m+1,S}}(1-\rho^{\lvert C\rvert -m})$ differs at most by $\eps$ from $G_{>m}(\rho)^{-1}$ for all $S>S_2$. Summarizing, fixing $S\ge \max\{S_1,S_2\}$ we obtain for sufficiently large $n$
\[
	\big| \pr{\forall C\in\C_{m+1,S}:d_C(\mathsf{G}_{n,N})=d_C} - \rho^{\lvert G\rvert-m\kappa(G)} G_{>m}(\rho)^{-1} \big| \le 2\eps.
\]
Since $\eps>0$ was arbitrary the proof of the theorem is finished with~\eqref{eq:prRnNG}.
\begin{proof}[{Proof of Lemma \ref{lem:coefficients_G(x,u,v)}}]
First we determine $G(x,y,\boldrm{v}_S)$ explicitly. Define the multivariate generating series 
\[
	C(x,y,\boldrm{v}_S)
	= y \left( C(x) + \sum_{C\in\C_{m+1,S}}(v_C-1)x^{\lvert C\rvert} \right),
\]
where as usual $x$ marks the size, $y$ the number of components (which by convention is always $1$ for $C\in\C$) and $\boldrm{v}_S$ objects in $\C_{m+1,S}$. Note that these parameters are clearly additive when forming multisets.
Hence, according to \cite[Theorem III.1]{Flajolet2009} the formula \eqref{eq:ogf_G_wo_weights} extends to the multivariate version
\begin{align}
\label{eq:G(x,y,v)=e(C(x,y,v))}
	G(x,y,\boldrm{v}_S)
	= \e{\sum_{j\ge 1}\frac{C(x^j,x^j,\boldrm{v}_S^j)}{j}},
\end{align}
where $\boldrm{v}_S^j = (v_C^j)_{C\in\C_{m+1,S}}$. Setting $v_C = 1$ for all $C\in\C_{m+1,S}$ we see that $G(x,y,\boldrm{1}) \equiv G(x,y)$ such that $[x^ny^N]G(x,y) = \lvert \Aux_{n,N}\rvert$. By elementary algebraic manipulations we reformulate \eqref{eq:G(x,y,v)=e(C(x,y,v))} to
\begin{align}
	\begin{split}
	\label{eq:G(x,y,v)_explicit}
		G(x,y,\boldrm{v}_S)
		&= G(x,y) \e{\sum_{C\in\C_{m+1,S}}\left(\sum_{j\ge 1}\frac{(x^{\lvert C\rvert}yv_C)^j}{j} 
		-\sum_{j\ge 1} \frac{(x^{\lvert C\rvert}y)^j}{j}\right)} \\
		&= G(x,y) \prod_{ C\in\C_{m+1,S}}
		\frac{1-x^{\lvert C\rvert}y}{1-x^{\lvert C\rvert} yv_C}.
	\end{split}
\end{align}
Let us now turn to the initial claim in Lemma \ref{lem:coefficients_G(x,u,v)}.
We obtain that
\begin{align*}
	[x^n y^N \boldrm{v}_S^{\boldrm{d}_S}]G(x,y,\boldrm{v}_S)
	&=[x^n y^N]G(x,y) \prod_{C\in\C_{m+1,S}} 
	[v_C^{d_C}] \frac{1-x^{\lvert C\rvert}y}{1-x^{\lvert C\rvert}v_C y} \\
	&= [x^{n-D} y^{N-D'}]G(x,y)  \prod_{C\in\C_{m+1,S}}(1-x^{\lvert C\rvert}y).
\end{align*}
Since $\C_{m+1,S}$ does only have finitely many elements, there exist $L,K\in\Nat$ such that $[x^\ell y^k]\prod_{C\in\C_{m+1,S}}(1-x^{\lvert C\rvert}y) = 0$ for all $\ell \ge L,k\ge K$. Recall that, using Theorem \ref{thm:1_coeff_G(x,y)},
\[
	[x^ny^N]G(x,y) 
	\sim \e{\sum_{j\ge 1}\frac{C(\rho^j)-c_m\rho^{jm}}{j\rho^{jm}}}
	\frac{N^{c_m-1}}{\Gamma(c_m)} \lvert\C_{n-m(N-1)}\rvert,
	\quad
	n\to\infty,
\]
and so $[x^{n-a}y^{N-b}]G(x,y) \sim [x^ny^N]G(x,y)\rho^{a-mb}$ for fixed $a,b\in\Nat$ as $\C$ is subexponential. 
Hence, as $n\to\infty$,
\begin{align*}
	[x^n y^N \boldrm{v}_S^{\boldrm{d}_S}]G(x,y,\boldrm{v}_S)
	&= \sum_{\ell\in[L],k\in[K]} [x^{n-D-\ell}y^{N-D'-k}] G(x,y)
	[x^\ell y^k]\prod_{C\in\C_{m+1,S}}(1-x^{\lvert C\rvert} y) \\
	&\sim [x^ny^N]G(x,y)\cdot \rho^{D-mD'} 
	\sum_{\ell\in[L],k\in[K]} \rho^{\ell-mk} 
	[x^\ell y^k]\prod_{C\in\C_{m+1,S}}(1-x^{\lvert C\rvert} y) \\
	&= [x^ny^N]G(x,y)\cdot \rho^{D-mD'} \prod_{C\in\C_{m+1,S}}(1-\rho^{\lvert C\rvert-m}),
\end{align*}
which finishes the proof.
\end{proof}

\subsection{Proof of Proposition \ref{prop:BS_limit}}
\label{subsec:proof_BS}
\begin{proof}[Proof of Proposition \ref{prop:BS_limit}]
It is a well-known fact that the weak convergence of $(\mathsf{G}_n,o_n)$ to $(\mathbbmss{G},\mathbbmss{o})$ in \eqref{eq:BS_def} is equivalent to showing that for any bounded and continuous function $f:\B_*\to\Real$
\[
	\lim_{n\to\infty} \ex{f(\mathsf{G}_n,o_n)}
	= \ex{f(\mathbbmss{G},\mathbbmss{o})}.
\]
For any finite graph $G$ denote by $o_G$ a vertex chosen uniformly at random from its vertex set. Let $\M(\mathsf{G}_{n,N})$ denote a (canonically chosen) largest component of $\mathsf{G}_{n,N}$ and $\R(\mathsf{G}_{n,N})$ the remainder after removing all objects of size $m$ and $\M(\mathsf{G}_{n,N})$. Let $f:\B_*\to\Real$ be an arbitrary bounded and continuous function. Then
\begin{align*}
	\ex{f(\mathsf{G}_{n,N},o_n)}
	=& \ex{f(\M(\mathsf{G}_{n,N}),o_{\M(\mathsf{G}_{n,N})})}\pr{o_n\in\M(\mathsf{G}_{n,N})} \\
	+&\ex{f(\R(\mathsf{G}_{n,N}),o_{\R(\mathsf{G}_{n,N})})}\pr{o_n\in\R(\mathsf{G}_{n,N})} \\
	+&\ex{f(\mathsf{C}_m,o_m)}\pr{o_n\notin \R(\mathsf{G}_{n,N})\cup\M(\mathsf{G}_{n,N})}.
\end{align*}
According to Theorem \ref{thm:3_size_largest_comp_L_n_N} we have that $\lvert\M(\mathsf{G}_{n,N})\rvert=n-mN+\mathcal{O}_p(1)$ implying $\pr{o_n\in\M(\mathsf{G}_{n,N})}\sim (n-mN)/n \to 1-\lambda$. As the size of $\M(\mathsf{G}_{n,N})\in\C$ tends to infinity and $(\mathsf{C}_n)_{n\ge 1}$ converges in the BS sense to $(\mathbbmss{C},\mathbbmss{o})$ we have that  
\[
	\ex{f(\M(\mathsf{G}_{n,N}),o_{\M(\mathsf{G}_{n,N})})}\pr{o_n\in\M(\mathsf{G}_{n,N})}
	\to (1-\lambda) \ex{f(\mathbbmss{C},\mathbbmss{o})},
	\quad n,N\to\infty.
\]
Theorem \ref{thm:4_distribution_of_remainder_R_n_N} entails that $\R(\mathsf{G}_{n,N})$ has a limiting distribution and hence $\pr{o_n\in\R(\mathsf{G}_{n,N})}\to 0$. As $f$ is bounded 
\[
	\ex{f(\R(\mathsf{G}_{n,N}),o_{\R(\mathsf{G}_{n,N})})}\pr{o_n\in\R(\mathsf{G}_{n,N})}
	\to 0,
	\quad
	n\to\infty.
\]
Finally, we obtain by combining Theorems \ref{thm:3_size_largest_comp_L_n_N} and \ref{thm:4_distribution_of_remainder_R_n_N} that $n - \lvert \R(\mathsf{G}_{n,N})\cup\M(\mathsf{G}_{n,N})\rvert = mN+\mathcal{O}_p(1)$ and hence $\pr{o_n\notin \R(\mathsf{G}_{n,N})\cup\M(\mathsf{G}_{n,N})} \sim mN/n \to \lambda$. Thus, 
\[
	\lim_{n,N\to\infty}\ex{f(\mathsf{G}_{n,N},o_n)}
	= (1-\lambda) \ex{f(\mathbbmss{C},\mathbbmss{o})}
	+ \lambda \ex{f(\mathsf{C}_m,o_m)}.
\]
\end{proof}

\section*{Acknowledgements}
The authors thank Benedikt Stufler for fruitful discussions and valuable input to the proof of Theorem \ref{thm:3_size_largest_comp_L_n_N}.
\bibliographystyle{abbrvnat}
\bibliography{references}

\begin{thebibliography}{49}
\providecommand{\natexlab}[1]{#1}
\providecommand{\url}[1]{\texttt{#1}}
\expandafter\ifx\csname urlstyle\endcsname\relax
  \providecommand{\doi}[1]{doi: #1}\else
  \providecommand{\doi}{doi: \begingroup \urlstyle{rm}\Url}\fi

\bibitem[Addario-Berry(2019)]{Addario2019}
L.~Addario-Berry.
\newblock {A Probabilistic Approach to Block Sizes in Random Maps}.
\newblock \emph{ALEA Lat. Am. J. Probab. Math. Stat.}, 16\penalty0
  (1):\penalty0 1--13, 2019.
\newblock URL \url{https://doi.org/10.30757/alea.v16-01}.

\bibitem[Aldous and Steele(2004)]{Aldous2004}
D.~Aldous and J.~M. Steele.
\newblock {The Objective Method: Probabilistic Combinatorial Optimization and
  Local Weak Convergence}.
\newblock In \emph{{Probability on Discrete Structures}}, volume 110 of
  \emph{{Encyclopaedia Math. Sci.}}, pages 1--72. Springer, Berlin, 2004.
\newblock \doi{10.1007/978-3-662-09444-0_1}.
\newblock URL \url{https://doi.org/10.1007/978-3-662-09444-0_1}.

\bibitem[Arratia et~al.(2003)Arratia, Barbour, and Tavar{\'e}]{Arratia2003}
R.~Arratia, A.~Barbour, and S.~Tavar{\'e}.
\newblock \emph{{Logarithmic Combinatorial Structures: A Probabilistic
  Approach}}.
\newblock {EMS monographs in mathematics}. European Mathematical Society, 2003.
\newblock ISBN 9783037190005.
\newblock URL \url{https://books.google.de/books?id=oBPvAAAAMAAJ}.

\bibitem[Barbour and Granovsky(2005)]{Barbour2005}
A.~Barbour and B.~L. Granovsky.
\newblock {Random Combinatorial Structures: the Convergent Case}.
\newblock \emph{Journal of Combinatorial Theory, Series A}, 109\penalty0
  (2):\penalty0 203--220, 2005.
\newblock ISSN 0097-3165.
\newblock \doi{10.1016/j.jcta.2004.09.001}.
\newblock URL
  \url{http://www.sciencedirect.com/science/article/pii/S0097316504001347}.

\bibitem[Bell et~al.(2000)Bell, Bender, Cameron, and Richmond]{Bell2000}
J.~P. Bell, E.~A. Bender, P.~J. Cameron, and L.~B. Richmond.
\newblock {Asymptotics for the Probability of Connectedness and the
  Distribution of Number of Components}.
\newblock \emph{Electron. J. Combin.}, 7:\penalty0 Research Paper 33, 22, 2000.
\newblock URL \url{https://doi.org/10.37236/1511}.

\bibitem[Benjamini and Schramm(2001)]{Benjamini2001}
I.~Benjamini and O.~Schramm.
\newblock {Recurrence of Distributional Limits of Finite Planar Graphs}.
\newblock \emph{Electron. J. Probab.}, 6:\penalty0 no. 23, 13, 2001.
\newblock ISSN 1083-6489.
\newblock \doi{10.1214/EJP.v6-96}.
\newblock URL \url{https://doi.org/10.1214/EJP.v6-96}.

\bibitem[Bernasconi et~al.(2010)Bernasconi, Panagiotou, and
  Steger]{Bernasconi2010}
N.~Bernasconi, K.~Panagiotou, and A.~Steger.
\newblock {On Properties of Random Dissections and Triangulations}.
\newblock \emph{Combinatorica}, 30\penalty0 (6):\penalty0 627--654, 2010.
\newblock ISSN 1439-6912.
\newblock \doi{10.1007/s00493-010-2464-8}.
\newblock URL \url{https://doi.org/10.1007/s00493-010-2464-8}.

\bibitem[Bernikovich and Pavlov(2011)]{Bernikovich2011}
E.~S. Bernikovich and Y.~L. Pavlov.
\newblock {On the Maximum Size of a Tree in a Random Unlabelled Unrooted
  Forest}.
\newblock \emph{Diskret. Mat.}, 23\penalty0 (1):\penalty0 3--20, 2011.
\newblock ISSN 0234-0860.
\newblock \doi{10.1515/DMA.2011.001}.
\newblock URL \url{https://doi.org/10.1515/DMA.2011.001}.

\bibitem[Bodirsky et~al.(2011)Bodirsky, Fusy, Kang, and Vigerske]{Bodirsky2011}
M.~Bodirsky, {\'E}.~Fusy, M.~Kang, and S.~Vigerske.
\newblock {Boltzmann Samplers, P{\'o}lya Theory, and Cycle Pointing}.
\newblock \emph{SIAM Journal on Computing}, 40\penalty0 (3):\penalty0 721--769,
  2011.
\newblock ISSN 0097-5397.
\newblock \doi{10.1137/100790082}.
\newblock URL \url{http://dx.doi.org/10.1137/100790082}.

\bibitem[Burris(2001)]{Burris2001}
S.~N. Burris.
\newblock \emph{{Number Theoretic Density and Logical Limit Laws}}, volume~86
  of \emph{{Mathematical Surveys and Monographs}}.
\newblock American Mathematical Society, Providence, RI, 2001.
\newblock ISBN 0-8218-2666-2.
\newblock \doi{10.1090/surv/086}.
\newblock URL \url{https://doi.org/10.1090/surv/086}.

\bibitem[Curien and Kortchemski(2014)]{Curien2014}
N.~Curien and I.~Kortchemski.
\newblock {Random Non‐-Crossing Plane Configurations: A Conditioned
  Galton-‐Watson Tree Approach}.
\newblock \emph{Random Structures \& Algorithms}, 45\penalty0 (2):\penalty0
  236--260, 2014.
\newblock \doi{10.1002/rsa.20481}.
\newblock URL \url{https://onlinelibrary.wiley.com/doi/abs/10.1002/rsa.20481}.

\bibitem[Drmota et~al.(2011)Drmota, Fusy, Kang, Kraus, and Ru{\'e}]{Drmota2011}
M.~Drmota, {\'E}.~Fusy, M.~Kang, V.~Kraus, and J.~Ru{\'e}.
\newblock {Asymptotic Study of Subcritical Graph Classes}.
\newblock \emph{SIAM J. Discrete Math.}, 25\penalty0 (4):\penalty0 1615--1651,
  2011.
\newblock ISSN 0895-4801.
\newblock \doi{10.1137/100790161}.
\newblock URL \url{https://doi.org/10.1137/100790161}.

\bibitem[Drmota et~al.(2014)Drmota, Gim{\'e}nez, Noy, Panagiotou, and
  Steger]{Drmota2014}
M.~Drmota, O.~Gim{\'e}nez, M.~Noy, K.~Panagiotou, and A.~Steger.
\newblock {The Maximum Degree of Random Planar Graphs}.
\newblock \emph{Proceedings of the London Mathematical Society}, 109\penalty0
  (4):\penalty0 892--920, 2014.
\newblock ISSN 1460-244X.
\newblock \doi{10.1112/plms/pdu024}.
\newblock URL \url{http://dx.doi.org/10.1112/plms/pdu024}.

\bibitem[Drmota et~al.(2019)Drmota, Jin, and Stufler]{Drmota2019}
M.~Drmota, E.~Y. Jin, and B.~Stufler.
\newblock {Graph Limits of Random Graphs From a Subset of Connected
  $k$--Trees}.
\newblock \emph{Random Structures \& Algorithms}, 55\penalty0 (1):\penalty0
  125--152, 2019.
\newblock ISSN 1042-9832.
\newblock \doi{10.1002/rsa.20802}.
\newblock URL \url{https://doi.org/10.1002/rsa.20802}.

\bibitem[Duchon et~al.(2004)Duchon, Flajolet, Louchard, and
  Schaeffer]{Duchon2004}
P.~Duchon, P.~Flajolet, G.~Louchard, and G.~Schaeffer.
\newblock {Boltzmann Samplers for the Random Generation of Combinatorial
  Structures}.
\newblock \emph{Combin. Probab. Comput.}, 13\penalty0 (4-5):\penalty0 577--625,
  2004.
\newblock ISSN 0963-5483.
\newblock \doi{10.1017/S0963548304006315}.
\newblock URL \url{https://doi.org/10.1017/S0963548304006315}.

\bibitem[Embrechts and Omey(1984)]{Embrechts1984}
P.~Embrechts and E.~Omey.
\newblock {Functions of Power Series}.
\newblock \emph{Yokohama Math. J.}, 32\penalty0 (1-2):\penalty0 77--88, 1984.
\newblock ISSN 0044-0523.

\bibitem[Erd{\"o}s and Lehner(1941)]{Erdoes1941}
P.~Erd{\"o}s and J.~Lehner.
\newblock {The Distribution of the Number of Summands in the Partitions of a
  Positive Integer}.
\newblock \emph{Duke Math. J.}, 8:\penalty0 335--345, 1941.
\newblock ISSN 0012-7094.
\newblock URL \url{http://projecteuclid.org/euclid.dmj/1077492649}.

\bibitem[Flajolet and Sedgewick(2009)]{Flajolet2009}
P.~Flajolet and R.~Sedgewick.
\newblock \emph{{Analytic Combinatorics}}.
\newblock Cambridge University Press, Cambridge, 2009.
\newblock ISBN 978-0-521-89806-5.
\newblock \doi{10.1017/CBO9780511801655}.
\newblock URL \url{https://doi.org/10.1017/CBO9780511801655}.

\bibitem[Flajolet et~al.(2007)Flajolet, Fusy, and Pivoteau]{Flajolet2007}
P.~Flajolet, {\'E}.~Fusy, and C.~Pivoteau.
\newblock {Boltzmann Sampling of Unlabelled Structures}.
\newblock In \emph{{2007 Proceedings of the Fourth Workshop on Analytic
  Algorithmics and Combinatorics (ANALCO)}}, pages 201--211. 2007.
\newblock \doi{10.1137/1.9781611972979.5}.
\newblock URL \url{http://epubs.siam.org/doi/abs/10.1137/1.9781611972979.5}.

\bibitem[Foss et~al.(2013)Foss, Korshunov, and Zachary]{Foss2013}
S.~Foss, D.~Korshunov, and S.~Zachary.
\newblock \emph{{An Introduction to Heavy-tailed and Subexponential
  Distributions}}.
\newblock {Springer Series in Operations Research and Financial Engineering}.
  Springer, New York, second edition, 2013.
\newblock ISBN 978-1-4614-7100-4; 978-1-4614-7101-1.
\newblock \doi{10.1007/978-1-4614-7101-1}.
\newblock URL \url{https://doi.org/10.1007/978-1-4614-7101-1}.

\bibitem[Georgakopoulos and Wagner(2016)]{Georgalopoulos2016}
A.~Georgakopoulos and S.~Wagner.
\newblock {Limits of Subcritical Random Graphs and Random Graphs with Excluded
  Minors}.
\newblock \emph{ArXiv e-prints}, 2016.
\newblock URL \url{https://arxiv.org/abs/1512.03572v2}.

\bibitem[Granovsky and Stark(2006)]{Granovsky2006}
B.~L. Granovsky and D.~Stark.
\newblock {Asymptotic Enumeration and Logical Limit Laws for Expansive
  Multisets and Selections}.
\newblock \emph{J. London Math. Soc. (2)}, 73\penalty0 (1):\penalty0 252--272,
  2006.
\newblock ISSN 0024-6107.
\newblock \doi{10.1112/S0024610705022477}.
\newblock URL \url{https://doi.org/10.1112/S0024610705022477}.

\bibitem[Granovsky and Stark(2015)]{Granovsky2015}
B.~L. Granovsky and D.~Stark.
\newblock {Developments in the Khintchine-Meinardus Probabilistic Method for
  Asymptotic Enumeration}.
\newblock \emph{Electron. J. Combin.}, 22\penalty0 (4):\penalty0 Paper 4.32,
  26, 2015.
\newblock URL \url{https://doi.org/10.37236/4581}.

\bibitem[Granovsky et~al.(2008)Granovsky, Stark, and Erlihson]{Granovsky2008}
B.~L. Granovsky, D.~Stark, and M.~Erlihson.
\newblock {Meinardus' Theorem on Weighted Partitions: Extensions and a
  Probabilistic Proof}.
\newblock \emph{Adv. in Appl. Math.}, 41\penalty0 (3):\penalty0 307--328, 2008.
\newblock ISSN 0196-8858.
\newblock \doi{10.1016/j.aam.2007.11.001}.
\newblock URL \url{https://doi.org/10.1016/j.aam.2007.11.001}.

\bibitem[Hardy and Ramanujan(1918)]{Hardy1918}
G.~H. Hardy and S.~Ramanujan.
\newblock {Asymptotic {F}ormulae in {C}ombinatory {A}nalysis}.
\newblock \emph{Proc. London Math. Soc. (2)}, 17:\penalty0 75--115, 1918.
\newblock ISSN 0024-6115.
\newblock \doi{10.1112/plms/s2-17.1.75}.
\newblock URL \url{https://doi.org/10.1112/plms/s2-17.1.75}.

\bibitem[Hwang(1997)]{Hwang1997}
H.-K. Hwang.
\newblock {Distribution of Integer Partitions with Large Number of Summands}.
\newblock \emph{Acta Arith.}, 78\penalty0 (4):\penalty0 351--365, 1997.
\newblock ISSN 0065-1036.
\newblock \doi{10.4064/aa-78-4-351-365}.
\newblock URL \url{https://doi.org/10.4064/aa-78-4-351-365}.

\bibitem[Hwang(2001)]{Hwang2001}
H.-K. Hwang.
\newblock {Limit Theorems for the Number of Summands in Integer Partitions}.
\newblock \emph{J. Combin. Theory Ser. A}, 96\penalty0 (1):\penalty0 89--126,
  2001.
\newblock ISSN 0097-3165.
\newblock \doi{10.1006/jcta.2000.3170}.
\newblock URL \url{https://doi.org/10.1006/jcta.2000.3170}.

\bibitem[Janson(2012)]{Janson2012}
S.~Janson.
\newblock {Simply Generated Trees, Conditioned Galton-Watson Trees, Random
  Allocations and Condensation}.
\newblock \emph{Probab. Surv.}, 9:\penalty0 103--252, 2012.
\newblock \doi{10.1214/11-PS188}.
\newblock URL \url{https://doi.org/10.1214/11-PS188}.

\bibitem[Knessl and Keller(1990)]{Knessl1990}
C.~Knessl and J.~B. Keller.
\newblock {Partition Asymptotics from Recursion Equations}.
\newblock \emph{SIAM J. Appl. Math.}, 50\penalty0 (2):\penalty0 323--338, 1990.
\newblock ISSN 0036-1399.
\newblock \doi{10.1137/0150020}.
\newblock URL \url{https://doi.org/10.1137/0150020}.

\bibitem[Leroux et~al.(1997)Leroux, Bergeron, and Labelle]{Leroux1998}
P.~Leroux, F.~Bergeron, and G.~Labelle.
\newblock \emph{{Combinatorial Species and Tree-Like Structures}}, volume~67 of
  \emph{{Encyclopedia of Mathematics and its Applications}}.
\newblock Cambridge University Press, Cambridge, 1997.
\newblock URL
  \url{http://www.bibsonomy.org/bibtex/216d18979a45d1a17ca0db4e5f58d3f47/brouder}.

\bibitem[{\L}uczak and Pittel(1992)]{Tomasz1992}
T.~{\L}uczak and B.~Pittel.
\newblock {Components of Random Forests}.
\newblock \emph{Combin. Probab. Comput.}, 1\penalty0 (1):\penalty0 35--52,
  1992.
\newblock ISSN 0963-5483.
\newblock \doi{10.1017/S0963548300000067}.
\newblock URL \url{https://doi.org/10.1017/S0963548300000067}.

\bibitem[Meinardus(1954)]{Meinardus1954}
G.~Meinardus.
\newblock {Asymptotische Aussagen {\"u}ber Partitionen}.
\newblock \emph{Math. Z.}, 59:\penalty0 388--398, 1954.
\newblock ISSN 0025-5874.
\newblock \doi{10.1007/BF01180268}.
\newblock URL \url{https://doi.org/10.1007/BF01180268}.

\bibitem[Mutafchiev(1998)]{Mutafchiev1998}
L.~Mutafchiev.
\newblock {The Largest Tree in Certain Models of Random Forests}.
\newblock \emph{Random Structures \& Algorithms}, 13\penalty0 (3‐4):\penalty0
  211--228, 1998.
\newblock
  \doi{10.1002/(SICI)1098-2418(199810/12)13:3/4<211::AID-RSA2>3.0.CO;2-Y}.
\newblock URL
  \url{https://onlinelibrary.wiley.com/doi/abs/10.1002/%28SICI%291098-2418%28199810/12%2913%3A3/4%3C211%3A%3AAID-RSA2%3E3.0.CO%3B2-Y}.

\bibitem[Mutafchiev(2011)]{Mutafchiev2011}
L.~Mutafchiev.
\newblock {Limit Theorems for the Number of Parts in a Random Weighted
  Partition}.
\newblock \emph{Electron. J. Combin.}, 18\penalty0 (1):\penalty0 Paper 206, 27,
  2011.
\newblock ISSN 1077-8926.
\newblock URL \url{https://doi.org/10.37236/693}.

\bibitem[Mutafchiev(2013)]{Mutafchiev2013}
L.~Mutafchiev.
\newblock {The Size of the Largest Part of Random Weighted Partitions of Large
  Integers}.
\newblock \emph{Combin. Probab. Comput.}, 22\penalty0 (3):\penalty0 433--454,
  2013.
\newblock ISSN 0963-5483.
\newblock \doi{10.1017/S0963548313000047}.
\newblock URL \url{https://doi.org/10.1017/S0963548313000047}.

\bibitem[Otter(1948)]{Otter1948}
R.~Otter.
\newblock {The Number of Trees}.
\newblock \emph{Ann. of Math. (2)}, 49:\penalty0 583--599, 1948.
\newblock ISSN 0003-486X.
\newblock \doi{10.2307/1969046}.
\newblock URL \url{https://doi.org/10.2307/1969046}.

\bibitem[Palmer and Schwenk(1979)]{Palmer1979}
E.~M. Palmer and A.~J. Schwenk.
\newblock {On the Number of Trees in a Random Forest}.
\newblock \emph{J. Combin. Theory Ser. B}, 27\penalty0 (2):\penalty0 109--121,
  1979.
\newblock ISSN 0095-8956.
\newblock \doi{10.1016/0095-8956(79)90073-X}.
\newblock URL \url{https://doi.org/10.1016/0095-8956(79)90073-X}.

\bibitem[Panagiotou and Ramzews(2018)]{Panagiotou2018_2}
K.~Panagiotou and L.~Ramzews.
\newblock {Asymptotic Enumeration of Graph Classes with Many Components}.
\newblock In \emph{{2018 {P}roceedings of the {F}ifteenth {W}orkshop on
  {A}nalytic {A}lgorithmics and {C}ombinatorics ({ANALCO})}}, pages 133--142.
  SIAM, Philadelphia, PA, 2018.
\newblock \doi{10.1137/1.9781611975062.12}.
\newblock URL \url{https://doi.org/10.1137/1.9781611975062.12}.

\bibitem[Panagiotou and Sinha(2012)]{Panagiotou2012}
K.~Panagiotou and M.~Sinha.
\newblock {Vertices of Degree {$k$} in Random Unlabeled Trees}.
\newblock \emph{J. Graph Theory}, 69:\penalty0 114--130, 2012.
\newblock ISSN 0364-9024.
\newblock \doi{10.1002/jgt.20567}.
\newblock URL \url{https://doi.org/10.1002/jgt.20567}.

\bibitem[Panagiotou and Steger(2010)]{Panagiotou2010}
K.~Panagiotou and A.~Steger.
\newblock {Maximal Biconnected Subgraphs of Random Planar Graphs}.
\newblock \emph{ACM Trans. Algorithms}, 6\penalty0 (2):\penalty0 31:1--31:21,
  2010.
\newblock ISSN 1549-6325.
\newblock \doi{10.1145/1721837.1721847}.
\newblock URL \url{http://doi.acm.org/10.1145/1721837.1721847}.

\bibitem[Panagiotou and Stufler(2018)]{Panagiotou2018}
K.~Panagiotou and B.~Stufler.
\newblock {Scaling Limits of Random P{\'o}lya Trees}.
\newblock \emph{Probability Theory and Related Fields}, 170\penalty0
  (3):\penalty0 801--820, 2018.
\newblock URL \url{https://doi.org/10.1007/s00440-017-0770-4}.

\bibitem[Panagiotou et~al.(2016)Panagiotou, Stufler, and
  Weller]{Panagiotou2016}
K.~Panagiotou, B.~Stufler, and K.~Weller.
\newblock {Scaling Limits of Random Graphs from Subcritical Classes}.
\newblock \emph{The Annals of Probability}, 44\penalty0 (5):\penalty0
  3291--3334, 2016.
\newblock \doi{10.1214/15-AOP1048}.
\newblock URL \url{http://dx.doi.org/10.1214/15-AOP1048}.

\bibitem[P{\'o}lya and Szeg\H{o}(1970)]{Polya1970}
G.~P{\'o}lya and G.~Szeg\H{o}.
\newblock \emph{{Aufgaben und {L}ehrs{\"a}tze aus der {A}nalysis. {B}and {I}:
  {R}eihen, {I}ntegralrechnung, {F}unktionentheorie}}.
\newblock {Vierte Auflage. Heidelberger Taschenb{\"u}cher, Band 73}.
  Springer-Verlag, Berlin-New York, 1970.

\bibitem[Stufler(2017)]{Stufler2017}
B.~Stufler.
\newblock {Asymptotic Properties of Random Unlabelled Block-weighted Graphs}.
\newblock \emph{ArXiv e-prints}, 2017.
\newblock URL \url{https://arxiv.org/abs/1712.01301v1}.

\bibitem[Stufler(2018{\natexlab{a}})]{Stufler2018}
B.~Stufler.
\newblock {Gibbs Partitions: the Convergent Case}.
\newblock \emph{Random Structures Algorithms}, 53\penalty0 (3):\penalty0
  537--558, 2018{\natexlab{a}}.
\newblock ISSN 1042-9832.
\newblock URL \url{https://doi.org/10.1002/rsa.20771}.

\bibitem[Stufler(2018{\natexlab{b}})]{Stufler2018_2}
B.~Stufler.
\newblock {Random Enriched Trees with Applications to Random Graphs}.
\newblock \emph{Electron. J. Combin.}, 25\penalty0 (3):\penalty0 Paper 3.11,
  81, 2018{\natexlab{b}}.
\newblock ISSN 1077-8926.
\newblock URL \url{https://doi.org/10.37236/7328}.

\bibitem[Stufler(2019)]{Stufler2019}
B.~Stufler.
\newblock {The Continuum Random Tree is the Scaling Limit of Unlabeled Unrooted
  Trees}.
\newblock \emph{Random Structures \& Algorithms}, 55\penalty0 (2):\penalty0
  496--528, 2019.
\newblock ISSN 1042-9832.
\newblock \doi{10.1002/rsa.20833}.
\newblock URL \url{https://doi.org/10.1002/rsa.20833}.

\bibitem[Stufler(2020)]{Stufler2020}
B.~Stufler.
\newblock {Unlabelled Gibbs Partitions}.
\newblock \emph{Combin. Probab. Comput.}, 29\penalty0 (2):\penalty0 293--309,
  2020.
\newblock ISSN 0963-5483.
\newblock \doi{10.1017/s0963548319000336}.
\newblock URL \url{https://doi.org/10.1017/s0963548319000336}.

\bibitem[Vershik(1996)]{Vershik1996}
A.~M. Vershik.
\newblock {Statistical Mechanics of Combinatorial Partitions, and their Limit
  Configurations}.
\newblock \emph{Funktsional. Anal. i Prilozhen.}, 30\penalty0 (2):\penalty0
  19--39, 96, 1996.
\newblock ISSN 0374-1990.
\newblock \doi{10.1007/BF02509449}.
\newblock URL \url{https://doi.org/10.1007/BF02509449}.

\end{thebibliography}

\end{document}